\newtheorem{theoreme}{Theorem}[section]
\newtheorem{lemme}[theoreme]{Lemma}
\newtheorem{proposition}[theoreme]{Proposition}
\newtheorem{corollary}[theoreme]{Corollary}
\theoremstyle{definition}
\newtheorem{definition}[theoreme]{Definition}
\newtheorem{probleme}{Problem}
\newtheorem{notation}[theoreme]{Notation}
\newtheorem{remark}[theoreme]{Remark}
\newtheorem{example}[theoreme]{Example}
\newcommand{\Si}{\mathfrak{S}}
\newcommand{\Ext}{{\mathrm{Ext}}}
\renewcommand{\hom}{{\mathrm{Hom}}}
\newcommand{\End}{{\mathrm{End}}}
\newcommand{\Id}{{\mathrm{Id}}}
\renewcommand{\k}{\Bbbk}
\newcommand{\op}{\mathrm{op}}
\renewcommand{\P}{\mathcal{P}}
\newcommand{\V}{\mathcal{V}}
\newcommand{\B}{\overline{B}}
\newcommand{\Gr}{\mathrm{Gr}}
\renewcommand{\t}{\mathrm{t}}
\newcommand{\R}{\mathcal{R}}
\newcommand{\Z}{\mathbb{Z}}
\newcommand{\E}{\mathbb{E}}
\newcommand{\N}{\mathbb{N}}
\newcommand{\F}{\mathcal{F}}
\newcommand{\U}{\mathcal{U}}
\newcommand{\Fp}{{\mathbb{F}_p}}
\newcommand{\Fdeux}{{\mathbb{F}_2}}
\newcommand{\KK}{\mathbb{K}}
\newcommand{\W}{{\underline{w}}}
\newcommand{\Pa}{{\underline{\pi}}}
\renewcommand{\H}{\mathbb{H}}
\newcommand{\ev}{\mathrm{ev}}
\title[Bar complexes and Extensions]{Bar complexes and extensions of classical exponential functors}
\author{Antoine Touz\'e }
\thanks{The author was partially supported by the ANR HGRT (Projet BLAN08-2 338236)} 
\subjclass{18G15, 57T30, 20G10}
\begin{document}

\sloppy

\begin{abstract}
We compute $\Ext$-groups between classical exponential functors (i.e. symmetric, exterior or divided powers) and their Frobenius twists. Our method relies on bar constructions, and bridges these $\Ext$-groups with the homology of Eilenberg-Mac Lane spaces.

Together with \cite{TouzeTroesch}, this article provides an alternative approach to classical $\Ext$-computations  \cite{FS,FFSS,Chalupnik1,Chalupnik2} in the category of strict polynomial functors over fields. We also obtain significant $\Ext$-computations for strict polynomial functors over the integers.
\end{abstract}

\maketitle

\section{Introduction}

\subsection{}
Given a commutative ring $\k$, we denote by $\P_\k$ the category of strict polynomial functors introduced by Friedlander and Suslin in \cite{FS}. These strict polynomial functors are functors $F:\V_\k\to \k\text{-Mod}$ from the category $\V_\k$ of finitely generated projective $\k$-modules to the category of $\k$-modules, equipped with an additional scheme-theoretic structure. The computation of $\Ext$ groups in $\P_\k$ has applications to the cohomology of algebraic groups and the Steenrod algebra, see \cite{Pan}.

The symmetric powers $S^d:V\mapsto S^d(V)$, the exterior powers $\Lambda^d:V\mapsto \Lambda^d(V)$ or the divided powers $\Gamma^d:V\mapsto \Gamma^d(V)=(V^{\otimes d})^{\Si_d}$ yield fundamental examples of strict polynomial functors. We refer to them as the `classical exponential functors'. Indeed they are the most common graded functors $X^*$ satisfying an exponential formula, i.e. a graded isomorphism (see section \ref{sec-exp} for more on exponential functors):
$$X^*(V\oplus W)\simeq X^*(V)\otimes X^*(W)\;.$$
If $\k$ is a field of positive characteristic $p$, we denote by $I^{(r)}$ the $r$-th Frobenius twist functor, for $r\in\N$ , and by $F^{(r)}$ the precomposition of a strict polynomial functor $F$ by the $r$-th Frobenius twist functor (with the convention that $I^{(0)}$ is the identity functor, hence $F^{(0)}=F$). Frobenius twists are additive functors, so the twisted variant $X^{*\,(r)}$ of a classical exponential functor $X^*$ is also an exponential functor.

The classical exponential functors, and their twisted variants over a field $\k$
 of positive characteristic, are among the most elementary examples of strict
polynomial functors. However, the computation of extension groups in $\P_\k$
between these functors is a challenging problem, and the main topic of this article.

Let us first consider the
untwisted case, over an arbitrary commutative ring $\k$. We order the classical
exponential functors as follows: $\Gamma^*<\Lambda^* < S^*$. If $X^*$ and $Y^*$
are classical exponential functors with $X^*\le Y^*$ then it is
well-known\footnote{This follows from the
projectivity of divided powers, and the injectivity of symmetric powers,
except for the case of the extensions $\Ext^i_{\P_\k}(\Lambda^*,\Lambda^*)$.
In the latter case, there are many proofs of this vanishing, and we will
provide one in remark \ref{rk-Ext-Lambda-Lambda}.} that
$\Ext^i_{\P_\k}(X^*,Y^*)$ is zero if $i>0$.
The degree zero part is an elementary computation, so the only
nontrivial computations to achieve are those of
$$(i)\;\Ext^*_{\P_\k}(S^*,\Lambda^*)\;,\quad
(ii)\;\Ext^*_{\P_\k}(\Lambda^*,\Gamma^*)\;,\quad
(iii)\;\Ext^*_{\P_\k}(S^*,\Gamma^*)\;.$$
Moreover, the first two extensions $(i)$ and $(ii)$ are isomorphic by a duality
argument. So we only consider the extension groups
$(i)$ and $(iii)$. Akin studied \cite{A} the extension groups $(i)$.
He actually performed computations in the equivalent category of
modules over the Schur algebra. His results give a formula for the dimension of
these $\Ext$-groups when $\k$ is a field of positive characteristic, as well as
the computation of $\Ext^i_{\P_\k}(S^d,\Lambda^d)$ for $i=1$ and $d-1$ when $\k$
is the ring of integers. The method of Akin uses the classical resolution of
exterior powers by symmetric powers, given by the bar construction of the
symmetric algebra. 

Over a field $\k$ of positive characteristic $p$, we also consider the
extensions between the classical exponential functors precomposed by
Frobenius twists. Precomposition by Frobenius twists actually creates a lot
of extensions, and the extension groups
$$\Ext^*_{\P_\k}(X^{*\,(r)},Y^{*\,(r)})$$
are non-trivial, even when the classical exponential functors $X^*$ is smaller
than $Y^*$ (for the order defined before).
The $\Ext$-groups with $X^*\le Y^*$ were computed in \cite{FFSS}. In this
article, the authors prove that these extension groups are equipped with a
convolution product, as well as a coproduct of the same type and they compute
these extension groups as trigraded bialgebras.
Their method is different from the method of Akin, namely it relies on the
analysis of some hypercohomology spectral sequences associated to Koszul and De
Rham complexes.
The remaining extension groups, i.e. 
those with $X^*> Y^*$, were computed in \cite{Chalupnik2} with the same
techniques, together with a Koszul duality phenomenon. However, \cite[Thm
3.2]{Chalupnik2}, a key result in \cite{Chalupnik2}, is false in characteristic
$2$ (as explained in remark \ref{rk-Chal}).

\subsection{}

The main goal of this article is to provide new methods to compute the extension
 groups in $\P_\k$ between classical exponential functors, and also between
their twisted variants. Our
methods differ from the methods used in \cite{FFSS,Chalupnik2}. They rather
take the same starting point as in Akin's computations \cite{A}, namely
(iterated) bar complexes of the symmetric algebra. We obtain the following
results.
\begin{enumerate}
\item[(1)] Over a field $\k$, our approach provides new independent
computations of all the $\Ext$-groups computed in \cite{FFSS,Chalupnik2}. The
statements of our results are given in section \ref{sec-final-state}. We
carefully compare our results with the results given in \cite{FFSS,Chalupnik2}
in section \ref{subsec-compare}. In particular our computations agree with all
the computations of \cite{FFSS}.
\item[(2)] Our methods also allow the computation of $\Ext$-groups between
classical exponential functors over the ring $\k=\Z$. In particular, we obtain
in theorem \ref{thm-PSZ} an explicit simple formula describing the abelian
groups $\Ext^i_{\P_\Z}(S^d,\Lambda^d)$, for all $i$, thus extending the results
of \cite{A}. Other (more complicated) descriptions  are obtained in theorems
\ref{thm-calculsanstwistIII} and \ref{thm-calculsanstwistIV}).
\end{enumerate}

To obtain our computations of extension groups in $\P_\k$ between (untwisted)
classical exponential functors, we interpret them
 as the singular homology with $\k$ coefficients of some
Eilenberg-Mac Lane spaces. 
The homology of these spaces was computed by Serre with
$\mathbb{F}_2$ coefficients \cite{Serre}, and by Cartan in general
\cite{Cartan}. Then we elaborate on Cartan's results to obtain our
$\Ext$-computations. As an intermediate result, we obtain the following
computation, of independent and general interest.
\begin{itemize}
\item[(3)] We compute in theorem \ref{thm-BnGamma-F} the homology of iterated
bar constructions of the divided power algebra $\Gamma^*(V)$, over any field
$\k$. (See corollary \ref{cor-fonc-alg} for the relation to Cartan's
computations and remark \ref{rk-comput-Car} for an explanation why Cartan's
computations only give the result over \emph{prime fields}).
\end{itemize}

Apart from the results listed above, we feel that many technical results proved
in the course of the article might be useful tools for further
$\Ext$-computations. For example, the key proposition
\ref{prop-key} has a wider range of application than classical exponential
functors.

\subsection{}
We now briefly sketch the structure of our proof.

\subsubsection{Structures on extension groups}\label{subsec-introexp} Let $X^*$
and $Y^*$ be classical exponential functors, (or their Frobenius twisted variant
if $\k$ is a field of positive characteristic). Before
 undertaking computations, we review in part
\ref{part-1} of the paper various structures which equip the extension groups
$\Ext^*_{\P_\k}(X^*,Y^*)$, and determine which ones it is important to compute.

{\bf Functoriality.} Parameterized extensions were introduced (for
different reasons) in \cite{Chalupnik2} and \cite{TouzeTroesch}, and they are
a key tool for our computations. If $V\in\V_\k$, we let $X^{*\, V}$ be the
functor $U\mapsto X^*(\hom_\k(V,U))$. The parameterized extensions 
$\E(X,Y)$ are the (tri)graded strict polynomial functor defined by (the
parameter is the variable `$V$')
$$\E(X,Y)(V)= \bigoplus_{i,d,e\ge
0}\E^i(X^{d}, Y^e)=\bigoplus_{i,d,e\ge
0}\Ext^i_{\P_\k}(X^{d\,V}, Y^e)\;.$$
We denote by $\H(X,Y)$ the `$\hom$-part', i.e. the summand with $i=0$, of
$\E(X,Y)$. We will study the functors $\E(X,Y)$ rather
than the mere extension groups $\Ext^*_{\P_\k}(X^*, Y^*)$ because functoriality
reveals useful hidden structure (cf. the gradings and the coproducts below), and
parameterized extension groups are not much more difficult to compute.

{\bf Gradings.} The trigrading of $\E(X,Y)$ is artificial because the second
and the third partial degrees are linearly dependent. For example,
$\E^*(S^{d},\Lambda^e)$ is zero if $d\ne e$, so we actually have to
compute the bigraded extensions
$$\E(S,\Lambda)=\bigoplus_{i,d\ge 0}\E^i(S^d,\Lambda^d)\;.$$
Moreover, since we study parameterized extension groups, even the bigrading
is artificial. Indeed, the second grading (i.e. the exponent `$d$') is
encoded by the functoriality\footnote{We call `weight' the
grading implicitly encoded by functoriality. See section \ref{sec-hierarchy} and
definition \ref{def-conv}.}. Thus we only have to determine $\E(X,Y)$ as a
strict polynomial functor, with a single $\Ext$-grading.

{\bf Products.}
The parameterized extension groups $\E(X,Y)$ are equipped with a convolution
product defined in \cite{FFSS}. We say for short a `$\P_\k$-graded algebra' for
a graded strict polynomial functor endowed with an algebra structure. We will
actually compute the $\P_\k$-graded algebra $\E(X,Y)$.

{\bf Coproducts.}
When the ground ring $\k$ is a field, $\E(X,Y)$ is also equipped with a
coproduct defined in \cite{FFSS}. The coproducts are computed in
\cite{FFSS,Chalupnik2}. 
However, we prove in section \ref{sec-exp} that these parameterized
extensions actually satisfy an
exponential formula:
$$\E(X,Y)(V\oplus W)\simeq \E(X,Y)(V)\otimes \E(X,Y)(W)\;.$$
The coproduct is determined by the exponential formula, and the exponential 
formula is in turn determined by the product on $\E(X,Y)$. Thus, if we compute
the $\P_\k$-graded algebra $\E(X,Y)$, we can easily deduce the coproduct from
the product (see remark \ref{rk-intro}).
That's why we limit ourselves to computing $\E(X,Y)$ as $\P_\k$-graded algebras
in this article. 

\subsubsection{Extensions between classical exponential functors}
As in the case of extension groups without parameters, the only non-trivial
computations to achieve are the computations of $\E(S,\Lambda)\simeq
\E(\Lambda,\Gamma)$ and $\E(S,\Gamma)$.

In part \ref{part-2} of the paper, show that these extension groups can be
computed (up to regrading) as the homology of the bar construction, resp. twice
iterated bar construction, of the divided power algebra. 

To prove this, we start in the same way as \cite{A}, that is, we consider
the resolutions of exterior and divided powers yield by iterated bar
constructions of the symmetric algebra. Thus our extension groups may be
computed (up to regrading) as the  homology of the differential graded
$\P_\k$-algebras
$\H(S,\B S)$ and $\H(S,\B^2 S)$. The regrading involves sign issues which are
taken care of by the `regrading functors' described in section
\ref{sec-regrading}. Then the key result is proposition \ref{prop-key}, which
says that in our situation (and many others), the symbols `$\H$' and `$\B$' can
be interchanged. For example
$$\H(S,\B S)\simeq \B(\H(S,S))\simeq\B\Gamma\;.$$

\subsubsection{The homology of Eilenberg-Mac Lane spaces} So we are left with
the problem of computing the $\P_\k$-graded algebra $H(\B^n\Gamma)$. 
Let $\Gamma(\k^n[2])$ be the free divided power algebra over $\k$,
generated by the $\k$-module $\k^n$ placed in degree $2$. We prove in corollary \ref{cor-fonc-alg}
an isomorphism of graded $\k$-algebras, natural with respect to $\Z^m$: 
$$H_*(\B^n(\Gamma(\Z^m\otimes_\Z\k[2])))\simeq H_*^{\mathrm{sing}}(K(\Z^m,n+2),\k)$$
between the homology of iterated bar constructions of divided powers, and the
singular homology of Eilenberg-Mac Lane spaces. The later was computed by Cartan
\cite{Cartan}. However, Cartan's result is not enough for our purposes: we really need the
left hand side as a $\P_\k$-graded algebra (see more details on $\P_\k$-graded algebras in section \ref{sec-hierarchy}).
So we have to elaborate
on Cartan's computations to get our results.

\subsubsection{Frobenius twists}
In part \ref{part-4} of the paper, we explain how to retrieve the
$\P_\k$-graded algebras  $\E(X^{(r)}, Y^{(s)})$ from
$\P_\k$-graded algebra $\E(X,Y)$ in a simple way ($\k$ is now a field of
positive characteristic $p$). 
This part is a sequel to
\cite{TouzeTroesch} and some theorems of \cite{TouzeTroesch} are actually used
in section \ref{sec-tw-ss}, while the other sections of part  \ref{part-4} are
dedicated to complementary techniques.

\subsection{Some conventions used in the article}

In the article, we work over a commutative base ring $\k$, we denote by $\k-\mathrm{mod}$ the category of $\k$-modules and by $\V_\k$ the full subcategory of 
finitely generated projective $\k$-modules. The
$\k$-linear dual of a $\k$-module $V$ is $V^\vee=\hom_\k(V,\k)$. Otherwise
explicitly stated, tensor products are taken over $\k$. In particular, if $V$ is
a graded $\k$-module, the symbols $S(V)$, $\Lambda(V)$ and $\Gamma(V)$ refer to
the symmetric, the  exterior and the divided power algebras over $\k$ (i.e.
$S(V)=S_\k(V)$). 

Otherwise explicitly stated, the word `degree' means `homological degree'. Homological degrees are denoted by indices, i.e. if $M$ is a complex, $M_i$ denotes the direct summand of degree $i$, and differentials lower the degree by one. Sometimes, cohomological degrees appear: they are denoted by exponents as in $\Ext^i_{\P_\k}(F,G)$. Such cohomological degrees can be converted into homological degrees by the usual formula: $M^i=M_{-i}$. If $M$ is an ungraded object, we denote by $M[i]$ the graded object with $M[i]_j=M$ if $i=j$ and zero if $i\ne j$. When cohomological degrees naturally appear in a formula, we use the notation $M\langle i\rangle$ to denote a copy of $M$ placed in cohomological degree $i$ (thus $M\langle i\rangle= M[-i]$).

It is well known that the abelian category $\P_\k$ of strict polynomial functors decomposes as a direct sum $\P_\k=\bigoplus_{d\ge 0}\P_{d,\k}$. Elements of $\P_{d,\k}$ are called homogeneous functors, and if $F\in\P_{d,\k}$, the integer $d$ is often called the `degree' of the functor \cite{FS}. To avoid confusion with homological degrees, we do not use this terminology, we rather call $d$ the \emph{weight} of the functor (cf section \ref{sec-recollections} below). 

\setcounter{tocdepth}{1} 
\tableofcontents

\part{General structure results}\label{part-1}

In this part, we describe the framework and the notations for our computations.

\section{Recollections of strict polynomial functors}\label{sec-recollections}
In this section, we briefly review basic facts of the theory of strict
polynomial functors needed in the article. We refer the reader to
\cite{FS,SFB,Pan,TouzeClassical,Krause} for more details.

\subsection{Strict polynomial functors}
For $d\ge 0$, let $\Gamma^d\V_\k$ be the category with the same
objects as $\V_\k$ (i.e. finitely generated projective $\k$-modules) but whose morphisms are $\Si_d$-equivariant maps:
$$\hom_{\Gamma^d\V_\k}(V,W)=\hom_{\Si_d}(V^{\otimes d},W^{\otimes d})\;,$$
where $\Si_d$ acts on $V^{\otimes d}$ by permuting the factors of the tensor product, and composition in $\Gamma^d\V_\k$ is composition of $\Si_d$-equivariant maps (by convention,
$V^{\otimes 0}=\k$, and $\Si_0=\{1\}$).
 The notation `$\Gamma^d\V_\k$' is justified by the
isomorphism $\Gamma^{d}(\hom_\k(V,W))\simeq\hom_{\Gamma^d\V_\k}(V,W)$, induced
by the isomorphism
$$\hom_{\Si_d}(V^{\otimes d},W^{\otimes d})=\hom_\k(V^{\otimes d},W^{\otimes
d})^{\Si_d}\simeq (\hom_\k(V,W)^{\otimes d})^{\Si_d}\;. $$

The category $\P_{d,\k}$ of homogeneous strict polynomial functors of weight
$d$ is the abelian category of $\k$-linear
functors from $\Gamma^d\V_\k$ to $\k$-mod. 

The abelian category $\P_{\k}$ of
strict polynomial functors is the direct sum: 
$$\P_{\k}=\bigoplus_{d\ge 0}\P_{d,\k}\;.$$ 
So the weight is an implicit grading on strict
polynomial functors.
Each strict polynomial functor $F$ splits as a direct sum  $F=\oplus F_d$ of
homogeneous strict polynomial functors $F_d$ of weight $d$, and
$\hom_{\P_\k}(F_d,G_e)$ equals $\hom_{\P_{d,\k}}(F_d,G_e)$ if $e=d$ and zero
otherwise.

\subsection{Strict polynomial vs ordinary functors}

Let $\F_\k$ denote the category of `ordinary functors', i.e. functors from
$\V_\k$ to $\k$-mod. There is a exact and faithful forgetful functor 
$$\U:\P_\k\to \F_\k\;.$$
(The restriction $\U:\P_{d,\k}\to \F_\k$ is defined as
the precomposition by the functor $\gamma_d:\V_\k\to \Gamma^d\V_\k$, where
$\gamma_d$ is the identity on objects and $\gamma^d(f):=f^{\otimes d}$.)
Thus, we can think of strict polynomial functors as ordinary functors, equipped
with a `strict polynomial structure'.
For example, the following functors
\begin{enumerate}
\item[(i)] the $d$-th tensor product $\otimes^d:V\mapsto V^{\otimes d}$,
\item[(ii)] the $d$-th exterior power $\Lambda^d:V\mapsto V^{\otimes d}$,
\item[(iii)] the $d$-th symmetric power $S^d:V\mapsto S^d(V)=(V^{\otimes
d})_{\Si_d}$ (coinvariants under the action of $\Si_d$),
\item[(iv)] the $d$-th divided power $\Gamma^d:V\mapsto \Gamma^d(V)=(V^{\otimes
d})^{\Si_d}$ (invariants under the action of $\Si_d$, the reason for the name
`divided power' is given in section \ref{sec-dpa}).
\end{enumerate}
have a canonical structure of homogeneous strict polynomial functors of weight
$d$. 

\begin{remark}\label{rk-attention}
If $\k$ is an infinite field, the forgetful functor 
$\U:\P_\k\to \F_\k$
is full and faithful, so $\P_\k$ can be thought of as a full subcategory of
$\F_\k$. In general, the behavior of the forgetful functor is more subtle.
For example, if $\k=\Fp$, the $r$-th Frobenius twist functor $I^{(r)}$
\cite[(v) p.224]{FS} and the
identity functor $I$ are nonisomorphic strict polynomial functors (they are
homogeneous of different weights), but $\U I^{(r)}=\U I$. Even if we only consider homogeneous strict polynomial functors of a given weight, the restriction $\U:\P_{d,\k}\to \F_\k$ 
behaves badly. For example if $\k=\Fdeux$, one can show that
$F=S^{2(1)}\otimes I^{(1)}$ and $G=S^2\otimes I^{(2)}$ are nonisomorphic
objects of $\P_{6,\Fdeux}$, but $\U(F)\simeq\U(G)$.
\end{remark}

\subsection{Operations on strict polynomial functors}
Usual operations on ordinary functors (composition, tensor products and duality)
lift at the level of strict polynomial functors. If $F$ and $G$ are strict
polynomial functors, their
tensor product $(F\otimes G)(V)=F(V)\otimes G(V)$ is canonically equipped with
the structure of a strict polynomial functor. If we restrict our attention to
homogeneous functors, tensor product induces a functor: 
$$\otimes :\P_{d,\k}\times \P_{e,\k}\to \P_{d+e,\k}.$$

Given a strict polynomial functor $F$, the dual functor $F^\sharp:V\mapsto
F(V^\vee)^\vee$ (where $^\vee$ refers to the $\k$-linear dual) is canonically
a strict polynomial functor. For example $S^{d\,\sharp}\simeq\Gamma^d$,
$\Lambda^{d\,\sharp}\simeq\Lambda^d$ and $\Gamma^{d\,\sharp}\simeq S^d$. The
functor 
$$^\sharp:\P_{d,\k}^\op\to \P_{d,\k}$$
is called `Kuhn duality' in \cite{FS}. 
If $\P_{d,\k}'$ denotes the full subcategory of
$\P_{d,\k}$ of functors with values in $\V_\k$, duality induces an
equivalence of categories ${\P_{d,\k}'}^\op\simeq \P_{d,\k}'$.

Finally, the composition $G\circ F$ of a strict polynomial functor $G$
by a strict polynomial functor $F$ makes sense, provided $F$ takes finitely
generated projective values. So composition
yields a functor: 
$$\circ:\P_{d,\k}\times \P_{e,\k}'\to \P_{de,\k}.$$
In particular, we can consider the precomposition by the following
homogeneous functors. The functors $V\otimes :W\mapsto V\otimes W$, and
$\hom_\k(V,-):W\mapsto \hom_\k(V,W)$ (these functors have weight $1$), and if
$\k$ is a field of characteristic $p>0$, the $r$-th Frobenius twist $I^{(r)}$
(of weight $p^r$) \cite[(v) p.224]{FS}. Such compositions will occur frequently
in the article.
\begin{notation}\label{nota-1}
For all $F\in \P_{\k}$, we use the following notations:
$$F^{(r)}= F\circ I^{(r)}\;,\quad F_V:= F\circ (V\otimes)\;,\quad F^V:= F\circ
\hom_\k(V,-)\;. $$
The functors $F_V$ and $F^V$ are called `parameterized functors' (the parameter
is the finitely generated projective $\k$-module $V$). 
\end{notation}

\subsection{Homological algebra in $\P_\k$}
The abelian categories $\P_{d,\k}$, have
enough projectives. To be more specific, a projective generator is given by
the parameterized divided power
$$\Gamma^{d,V}:=\hom_{\Gamma^d\V_\k}(V,-)\simeq(\Gamma^d)^V\;,$$
for $V$ a free $\k$-module with rank greater or equal to $d$. The projectivity
of $\Gamma^{d,V}$ (for all $V\in\V_\k$) is a consequence of the Yoneda
isomorphism
(frequently used in the sequel of the article):
$$\hom_{\P_{d,\k}}(\Gamma^{d,V},F)\simeq F(V)\;.$$

Since $\P_{\k}=\bigoplus_{d\ge 0}\P_{d,\k}$ the category $\P_\k$ also has
enough projectives. If $F_d$ and $G_e$ are homogeneous strict polynomial
functors of weight $d$, resp. $e$, then $\Ext^*_{\P_\k}(F_d,G_e)$ is equal to
$\Ext^*_{\P_{d,\k}}(F_d,G_e)$ if $d=e$, and zero otherwise.

The following lemma explains that the parameterized symmetric powers $S^d_V$
play the role of injective functors, provided we restrict to functors with
values in $\V_\k$ (which is the case for all the $\Ext$-computations of the
paper).

\begin{lemme}\label{lm-sym-res}
Let $F,G \in\P_{d,\k}$. Assume that $F$ and $G$ take finitely generated
projective values. Duality induces an isomorphism (natural in $F$,$G$)
$$\Ext^*_{\P_\k}(F,G)\simeq \Ext^*_{\P_\k}(G^\sharp,F^\sharp)\;.$$
Moreover $G$ admits a coresolution by finite direct sums of copies of functors
of the form $S^d_V$, and if $J_G$ is such a resolution, $\Ext^*_{\P_\k}(F,G)$
equals the homology of the complex $\hom_{\P_\k}(F,J_G)$.
\end{lemme}
\begin{proof}
Let us denote by $\P_\k'$ the full subcategory of $\P_\k$ whose objects are the
strict polynomial functors with values in $\V_\k$.  This is an exact
subcategory of $\P_{\k}$, with projective objects $\Gamma^{d,V}$, cf.
\cite[Section 2]{SFB}. A projective resolution in $\P_\k'$ remains a
projective resolution if we view it in $\P_\k$, so inclusion
$\P'_\k\hookrightarrow \P_\k$ induces an isomorphism at the level of $\Ext$s.

The duality functor is an equivalence of categories when
restricted to $\P'_\k$. So the first assertion of lemma
\ref{lm-sym-res} follows from the commutative diagram:
$$\xymatrix{
\Ext^*_{\P_\k}(F,G)\ar[r]^-{^\sharp} & \Ext^*_{\P_\k}(G^\sharp,F^\sharp)\\
\Ext^*_{\P'_\k}(F,G)\ar[r]^-{^\sharp}_-{\simeq}\ar[u]^-{\simeq} &
\Ext^*_{\P'_\k}(G^\sharp,F^\sharp)\ar[u]^-{\simeq}}
.$$
The second assertion of lemma \ref{lm-sym-res} follows from the fact that
category $\P_\k'$ has enough injectives, and an injective cogenerator is
provided by the functors $S^d_V$, cf. \cite[Section 2]{SFB}. 
\end{proof}

\section{A hierarchy of algebras}\label{sec-hierarchy}

In this article, the algebras considered are often endowed with an additional structure such as functoriality or an extra grading (called `weight'). The categories of $\k$-algebras we will use are organized according to the following diagram.
$$
\xymatrix{
{\text{\{$\mathcal{P}_\k$-dg-alg\}}}\ar[rr]^-{\ev_V}\ar[d]^-{\U}&& {\text{\{$\k$-wdg-alg\}}}\ar[d]&\ar@{}[d]^-{(\mathcal{H})} \\
{\text{\{$\mathcal{F}_\k$-dg-alg\}}}\ar[rr]^-{\ev_V}&& {\text{\{$\k$-dg-alg\}}}& \\
}
$$
In this diagram, the arrows are functors induced by forgetting part of the structure. Thus, the category \{$\k$-dg-alg\} on the bottom right corner of the diagram is the one having the less structured objects. In this section, we review the definitions of the various categories  appearing in the hierarchy diagram $(\mathcal{H})$. 

\subsection{Differential graded algebras}

By \{$\k$-dg-alg\}, we denote the usual category of differential graded algebras over the commutative base ring $\k$.

If $A$ is a differential graded algebra, the degree of a homogeneous element $x\in A$ is denoted by $|x|$. Recall that the tensor product $A\otimes B$ of two dg-algebras has product $(a\otimes b)\cdot (a'\otimes b')=(-1)^{|a'||b|} aa'\otimes bb'$ and differential $\partial(a\otimes b)=(\partial a)\otimes b+(-1)^{|a|}a\otimes (\partial b)$.

Graded $\k$-algebras are viewed as dg-algebras with trivial differential, and ungraded $\k$-algebras are viewed as dg-algebras concentrated in degree zero. 

\subsection{Weighted differential graded algebras}

By \{$\k$-wdg-alg\}, we denote the category of weighted differential graded algebras over $\k$. The objects of this category are the weighted graded differential algebras over $\k$, that is, the dg-algebras $A$ equipped with an extra nonnegative grading, called `weight'. The weight of a homogeneous element $x \in A$ is denoted by $w(x)$ and the weights are required to satisfy:
$$ w(xy)=w(x)+w(y)\,,\qquad w(\partial x)= w(x)\,. $$
Morphisms of wdg-algebras are $\k$-linear maps which preserve the gradings and the weights, which are multiplicative and which commute with the differentials.

The tensor product $A\otimes B$ of two wdg-algebras is defined by letting for all homogeneous elements $a$, $b$ :
\begin{itemize}
\item[(i)] $ |a\otimes b|=|a|+|b|$, and $w(a\otimes b)=w(a)+w(b)$,
\item[(ii)] $(a\otimes b)\cdot (a'\otimes b')= (-1)^{|a'||b|}aa'\otimes bb'$,
\item[(iii)] $ \partial (a\otimes b)=(\partial a)\otimes b+(-1)^{|a|}a\otimes (\partial b)$.
\end{itemize}
Observe that the weights do not contribute to the signs in the definition of tensor products. Thus, forgetting the weights yields a functor, which preserves the tensor products:
$$\{\text{$\k$-wdg-alg}\}\to \{\text{$\k$-dg-alg}\} \;.$$

\subsection{Functorial differential graded algebras}

We denote by \{$\mathcal{F}_\k$-dg-alg\} the category of functorial differential graded algebras. The objects of this category are the functors $A:\V_\k\to \{\text{$\k$-dg-alg}\}$, and the morphisms are the natural transformations between such functors.

Let $V\in\V_\k$ be a finitely generated projective $\k$-module. Evaluation on $V$ yields a forgetful functor (we forget the naturality with respect to $V$):
$$\{\text{$\mathcal{F}_\k$-dg-alg}\}\xrightarrow[]{\ev_V} \{\text{$\k$-dg-alg}\}.$$
The tensor product of two $\mathcal{F}_\k$-dg-algebras is defined in the target category, so that the forgetful functor commutes with tensor products. 

\subsection{Strict polynomial differential graded algebras}\label{subsec-strpoldgalg}
We denote by \{$\mathcal{P}_\k$-dg-alg\} the category of strict polynomial differential graded algebras. This is a category of highly structured algebras. To be more specific, we make the following definitions.
\begin{definition}\label{def-str-pol-alg}
Let $\k$ be a commutative ring. A strict polynomial differential graded algebra ($\P_\k$-dg-algebra) $A$ is a collection of strict polynomial functors $\{A_{i,d}\}$ for $i\in\Z$ and $d\ge 0$, equipped with morphisms of strict polynomial functors $\eta:\k\to A_{0,0}$, $m :A_{i,d}\otimes A_{j,e}\to A_{i+j,d+e}$, and $\partial :A_{i,d}\to A_{i-1,d}$ satisfying the following properties.
\begin{itemize}
\item[(i)] For all $d\ge 0$, the strict polynomial functor $A_{i,d}$ is homogeneous of weight $d$.
\item[(ii)] For all $V\in\V_\k$, the unit $\eta$, the multiplication $m$ and the differential $\partial $ make the $\k$-module $A(V)=\bigoplus_{i,d} A_{i,d}(V)$ into a weighted differential graded algebra. 
\end{itemize}
The functor $A_{i,d}$ will be referred to as the homogeneous part of degree $i$ and weight $d$ of $A$.
\end{definition}

The $\P_\k$-graded algebras are the $\P_\k$-dg-algebras with trivial
differential, 
and the $\P_\k$-algebras are the $\P_\k$-dg-algebras concentrated in degree
zero. 

Symmetric, exterior and divided powers
yield fundamental examples of $\P_\k$-graded algebras. To be more specific, let
$X=S,\Lambda$ or $\Gamma$. Then $\hom_{\P_\k}(X^d\otimes
X^e,X^{d+e})$ is a rank one free $\k$-module. If $X=S$ or $\Lambda$, it is
generated by the unique morphism fitting into diagram $(D1)$ below. For
$X=\Gamma$, it is generated by the unique morphism fitting in diagram $(D2)$,
where $\mathrm{S}(d,e)\subset \Si_{d+e}$ is the set of all $(d,e)$-shuffles. We
call this generator the canonical map between $X^d\otimes X^e$ and $X^{d+e}$.
$$\xymatrix{
(\otimes^d)\otimes
(\otimes^e)\ar@{->>}[d]\ar[r]^-{=}&\otimes^{d+e}\ar@{->>}[d]\ar @{}
[dl]|{(D1)}\\
X^d\otimes X^e\ar@{-->}[r]_-{\exists !}&X^{d+e}
}\;,\qquad 
\xymatrix{
(\otimes^d)\otimes (\otimes^e)\ar[rr]^-{\sum_{\sigma\in \mathrm{S}(d,e)}
\sigma}&&\otimes^{d+e}\ar @{} [dll]|{(D2)}\\
\Gamma^d\otimes \Gamma^e\ar@{^{(}->}[u]\ar@{-->}[rr]_-{\exists
!}&&\Gamma^{d+e}\ar@{^{(}->}[u]
}.$$ 
In the remainder of the article, we will mostly use the $\P_\k$-graded algebras
of the following type.
\begin{itemize}
\item We denote by $S[i]$ the symmetric algebra over generators of homological
degree $i$. That is, $S[i]$ is the strict polynomial graded algebra whose
homogeneous part of degree $di$ and weight $d$ equals $S^d$, with multiplication
defined by the canonical maps $S^d\otimes S^e\to S^{d+e}$. The algebra $S[0]$
will simply be denoted by $S$. We define $\Lambda[i]$ and $\Gamma[i]$ (and
$\Lambda$, $\Gamma$) similarly.
\item More generally, 
if $F$ is a graded strict polynomial functor with values in $\V_\k$, we denote
by $S(F)$ the symmetric algebra on $F$. It is formed by the graded strict
polynomial functors $S^d\circ F$ (with degrees defined as usual), and the
multiplication is given by evaluating the canonical map $S^d\otimes S^e\to
S^{d+e}$ on $F$. We define the exterior algebra $\Lambda(F)$ over $F$, and the
divided power algebra $\Gamma(F)$ over $F$ similarly.
\end{itemize} 

\begin{definition}
A morphism of $\mathcal{P}_\k$-dg-algebras $f:A\to B$ is a collection of morphisms of strict polynomial functors $f_{i,d}:A_{i,d}\to B_{i,d}$ which commute with the multiplications and the differentials of $A$ and $B$.
\end{definition}

We define the tensor product $A\otimes B$ of two $\P_\k$-dg-algebras by formulas similar to the ones for the wdg-algebras (i.e. the degrees introduce Koszul signs in the definition of the product and the differentials, but the weights do not introduce such  Koszul signs). The forgetful functor  $\mathcal{U}:\P_\k\to \F_\k$ induces a functor:
$$\{\text{$\mathcal{P}_\k$-dg-alg}\}\xrightarrow[]{\mathcal{U}} \{\text{$\mathcal{F}_\k$-dg-alg}\}$$
which commutes with tensor products. If $V\in\V_\k$ is a finitely generated projective $\k$-module, evaluation on $V$ induces a forgetful functor:
$$\{\text{$\mathcal{P}_\k$-dg-alg}\}\xrightarrow[]{\ev_V} \{\text{$\k$-wdg-alg}\},$$
which also commutes with tensor products.

\begin{remark}\label{rk-attentionbis}
Let $A$ be a $\P_\k$-dg-algebra. Then knowing the collection of wdg-algebras
$A(V)$, $V\in\V_\k$, and the $\F_\k$-dg-algebra
$\mathcal{U} A$ is usually not sufficient
to determine $A$ as a $\P_\k$-dg-algebra.
Indeed, one can find (see remark \ref{rk-attention}) nonisomorphic functors
$F,G\in\P_{d,\k}$, such that $\U(F)\simeq \U(G)$. Then the symmetric algebras
$A=S(F)$ and $B=S(G)$ are not isomorphic as $\P_\k$-algebras, although we have
isomorphisms of $\F_\k$-algebras $\mathcal{U}A = \mathcal{U}B$, and isomorphisms
of wg-algebras $A(V)\simeq B(V)$ for all $V$.
\end{remark}

\subsection{More strict polynomial objects}
In the article, we also need other strict polynomial objects. 
For instance, the \emph{strict polynomial commutative differential graded
algebras} ($\P_\k$-cdg-algebras) are the  cdg-algebras which are commutative as
differential graded algebras, i.e. $xy=(-1)^{|x||y|}yx$ for all homogeneous
elements $x,y\in A(V)$, for all $V\in\V_\k$. The algebras can also be augmented
($\P_\k$-dga-algebras), or commutative and augmented ($\P_\k$-cdga-algebras). We
also use strict polynomial differential graded coalgebras
($\P_\k$-dg-coalgebras),  etc. It is quite obvious how to adapt definition
\ref{def-str-pol-alg} to these cases, and we leave this to the reader.

\section{Structure of extension groups}

\subsection{Parameterized extension groups}\label{subsec-param}
In this section we introduce the parameterized extension groups $\E(F,G)$
alluded to in the introduction of the paper.
 We begin by parameterized $\hom$-groups $\H(F,G)=\E^0(F,G)$.
Let $F\in\P_{d,\k}$, and recall from notation \ref{nota-1} the parameterized
functors:
$$F_V:W\mapsto F(V\otimes W)\quad\text{ and }\quad F^V:W\mapsto F(\hom_\k(V,W))\;.$$
Morphisms $f\in\hom_{\Gamma^d\V_\k}(V,V')$ induce morphisms of strict polynomial
functors
$F_V\to F_{V'}$ and $F^{V'}\to F^V$.
Hence, for all $F,G\in\P_{d,\k}$ we have functors:
$$\begin{array}{ccc}
\Gamma^d\V_\k&\to & \k\mathrm{-mod}\\
V&\mapsto &\hom_{\P_{d,\k}}(F^V,G)
\end{array}\;,
\qquad
\begin{array}{ccc}
\Gamma^d\V_\k&\to & \k\mathrm{-mod}\\
V&\mapsto &\hom_{\P_{d,\k}}(F,G_V)
\end{array}
\;.
$$
\begin{lemme}\label{lm-def-H}
The two strict polynomial functors $$V\mapsto \hom_{\P_{d,\k}}(F^V,G)\quad\text{ and }\quad V\mapsto \hom_{\P_{d,\k}}(F,G_V) $$
are canonically isomorphic. We denote them by $\H(F,G)$.
\end{lemme}
\begin{proof}
Let us first take $F=\Gamma^{d,U}$. Then $(\Gamma^{d,U})^V=\Gamma^{d,U\otimes
V}$ and the Yoneda lemma yields an isomorphism:
$$\hom_{\P_{d,\k}}((\Gamma^{d,U})^V,G)\simeq G(U\otimes V) \simeq
\hom_{\P_{d,\k}}(\Gamma^{d,U},G_V) \,,$$
natural with respect to $G$, to $f\in\hom_{\Gamma^d\V_\k}(V,V')$ and $g\in\hom_{\Gamma^d\V_\k}(U,U')$ (or equivalently to 
$g\in\hom_{\P_{d,\k}}(\Gamma^{d,U'},\Gamma^{d,U})$). 
Since the $\Gamma^{d,U}$, $U\in\V_\k$, form a projective generator of
$\P_{d,\k}$, we can take presentations of $F$ to extend this isomorphism to all
$F\in\P_{d,\k}$.
\end{proof}

Let us turn to the category $\P_{\k}$ of strict polynomial functors. If $F,G$ are strict polynomial functors, they split as finite direct sums $F=\bigoplus F_d$ and $G=\bigoplus G_d$, where $F_d$ and $G_d$ are homogeneous of weight $d$, and by lemma \ref{lm-def-H}, the functors
\begin{align*}&V\mapsto \hom_{\P_{\k}}(F^V,G)= \textstyle\bigoplus \hom_{\P_{d,\k}}(F_d^V,G_d)\\
&V\mapsto \hom_{\P_{\k}}(F,G_V)= \textstyle\bigoplus \hom_{\P_{d,\k}}(F_d,(G_d)_V)
\end{align*}
are canonically isomorphic strict polynomial functors, which we still denote by $\H(F,G)$. The following lemma summarizes the main properties of parameterized $\hom$ groups.
\begin{lemme}\label{lm-prop-param-hom}
Let $\k$ be a commutative ring. Parameterized $\hom$ groups yield a bifunctor:
$$
\begin{array}{ccc}
\P_\k^\op\times\P_\k &\to &\P_\k\\
(F,G)&\mapsto & \H(F,G)
\end{array}.
$$
If $F$, $G$ are homogeneous of weight $d$, then so is $\H(F,G)$. Moreover:
\begin{enumerate}
\item[(1)] If $F,G$ take values in $\V_\k$, Kuhn duality yields an isomorphism of strict polynomial functors $\H(F,G)\simeq \H(G^\sharp,F^\sharp)$, natural in $F,G$.
\item[(2)] If $G\in\P_{d,\k}$, there is an isomorphism $\H(\Gamma^d,G)\simeq G$, natural in $G$.
\item[(3)] Tensor products induce morphisms of strict polynomial functors:
$$\H(F,G)\otimes \H(F',G')\xrightarrow[]{\otimes} \H(F\otimes F',G\otimes G') \;.$$
\end{enumerate}
\end{lemme}

\begin{proof}
The first part of lemma \ref{lm-prop-param-hom} follows from lemma \ref{lm-def-H}. To prove (1), we can assume that $F,G$ are homogeneous of weight $d$. Since $(F^V)^\sharp=(F^\sharp)_V$, there is an isomorphism natural in $F,G$ and $f\in\hom_{\Gamma^d\V_\k}(V,V')$
$$\hom_{\P_{d,\k}}(F^V,G)\simeq \hom_{\P_{d,\k}}(G^\sharp,(F^V)^\sharp)=\hom_{\P_{d,\k}}(G^\sharp,(F^\sharp)_V)\;.$$
Whence the result. (2) is the Yoneda lemma. For (3), we can assume that $F,G$ (resp. $F',G'$) are homogeneous of degree $d$ (resp. $e$). The map:
$$\hom_{\P_{d,\k}}(F^V,G)\otimes \hom_{\P_{e,\k}}((F')^W,G')\xrightarrow[]{\otimes} \hom_{\P_{d+e,\k}}(F^V\otimes (F')^W,G\otimes G')$$
is natural with respect to $f\in\hom_{\Gamma^d\V_\k}(V,V')$ and $g\in\hom_{\Gamma^e\V_\k}(W,W')$ (i.e. it is a morphism of strict polynomial bifunctors). Hence it becomes a morphism of strict polynomial functors if one takes $V=W$.
\end{proof}

To define parameterized extension groups, we fix for each $F$ a projective resolution $P_F$ in $\P_\k$. We let $\E(F,G)$ be the homology of the complex of strict polynomial functors $\H(P_F,G)$. The following proposition follows directly from lemma \ref{lm-prop-param-hom}.
\begin{proposition}\label{prop-strpolstruct}
Let $\k$ be a commutative ring and let $F,G$ be strict polynomial functors over $\k$. For all $i\ge 0$, the functors 
$$V\mapsto \Ext^i_{\P_\k}(F^V,G)\quad\text{ and }\quad V\mapsto \Ext^i_{\P_\k}(F,G_V)$$
are isomorphic strict polynomial functors. We denote them by $\E^i(F,G)$. 
This yields bifunctors:
$$
\begin{array}{ccc}
\P_\k^\op\times\P_\k &\to &\P_\k\\
(F,G)&\mapsto & \E^i(F,G)
\end{array}.
$$
The homogeneous part of weight $d$ of the strict polynomial functor $\E^i(F,G)$ is $\E^i(F_d,G_d)$, where $F_d$ and $G_d$ denote the homogeneous parts of $F$ and $G$ of weight $d$. Moreover, if $F,G$ take values in $\V_\k$, Kuhn duality induces an isomorphism 
$\E^i(F,G)\simeq \E^i(G^\sharp,F^\sharp)$. Finally, tensor products induce morphisms of strict polynomial functors
$$\E^i(F,G)\otimes \E^j(F',G')\to \E^{i+j}(F\otimes F',G\otimes G')\;.$$
\end{proposition}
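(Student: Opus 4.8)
The plan is to derive everything from Lemma~\ref{lm-prop-param-hom} by a standard derived-functor argument, treating each assertion of Proposition~\ref{prop-strpolstruct} as the derived analogue of the corresponding statement in Lemma~\ref{lm-prop-param-hom}. First I would check that $\E^i(F,G)$ is well-defined: having fixed a projective resolution $P_F$ of $F$ in $\P_\k$ (which exists since $\P_\k$ has enough projectives, namely sums of the $\Gamma^{d,V}$), the complex $\H(P_F,G)$ is a complex of strict polynomial functors because $\H(-,G)$ takes values in $\P_\k$ by Lemma~\ref{lm-prop-param-hom}; its homology is therefore a strict polynomial functor. Independence of the choice of $P_F$ is the usual comparison-of-resolutions argument applied in the abelian category $\P_\k$. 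For the identification of $V\mapsto\Ext^i_{\P_\k}(F^V,G)$ with this homology, note that for each fixed $V$ the functor $F\mapsto F^V$ is exact (it is precomposition with the exact functor $\hom_\k(V,-)$) and sends projectives to projectives (since $(\Gamma^{d,U})^V=\Gamma^{d,U\otimes V}$, as recalled in the proof of Lemma~\ref{lm-def-H}), so $(P_F)^V=P_{F^V}$ up to the usual identifications, and $\H(P_F,G)(V)=\hom_{\P_\k}((P_F)^V,G)$ computes $\Ext^i_{\P_\k}(F^V,G)$. The identification with $V\mapsto\Ext^i_{\P_\k}(F,G_V)$ then follows from the canonical isomorphism of Lemma~\ref{lm-def-H} between $\H(F^{\bullet},G)$ and $\H(F,G_{\bullet})$, which is natural in the first variable and hence passes to $P_F$ and to homology.

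Next I would treat bifunctoriality: a morphism $G\to G'$ induces $\H(P_F,G)\to\H(P_F,G')$ and hence a map on homology, while a morphism $F\to F'$ lifts (uniquely up to homotopy) to a chain map $P_F\to P_{F'}$, inducing $\H(P_{F'},G)\to\H(P_F,G)$; functoriality and independence of the lift are standard. The weight decomposition is immediate from the splitting $\P_\k=\bigoplus_d\P_{d,\k}$: one may take $P_F=\bigoplus_d P_{F_d}$ with $P_{F_d}$ a resolution in $\P_{d,\k}$, and $\H$ respects weights by Lemma~\ref{lm-prop-param-hom}, so $\E^i(F,G)=\bigoplus_d\E^i(F_d,G_d)$ and the weight-$d$ part is $\E^i(F_d,G_d)$.

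For the Kuhn duality isomorphism, I would use Lemma~\ref{lm-sym-res}: when $F,G$ take values in $\V_\k$ we may work in $\P'_\k$, where duality is an exact self-inverse equivalence, so it carries a projective resolution $P_F$ of $F$ to an injective coresolution $P_F^\sharp$ of $F^\sharp$; then $\Ext^*_{\P_\k}(G^\sharp,F^\sharp)$ is computed by $\hom_{\P_\k}(G^\sharp,P_F^\sharp)$, which by the degree-zero duality statement of Lemma~\ref{lm-prop-param-hom}(1) is isomorphic as a complex of strict polynomial functors to $\H(P_F,G)$; passing to homology gives the claimed natural isomorphism $\E^i(F,G)\simeq\E^i(G^\sharp,F^\sharp)$. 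Finally, for the product $\E^i(F,G)\otimes\E^j(F',G')\to\E^{i+j}(F\otimes F',G\otimes G')$, I would use that $P_F\otimes P_{F'}$ is a projective resolution of $F\otimes F'$ (the tensor product of two bounded-below complexes of projectives in $\P_\k$ resolving $F$ and $F'$; here one uses that $\otimes:\P_{d,\k}\times\P_{e,\k}\to\P_{d+e,\k}$ is exact in each variable and sends $\Gamma^{d,U}\otimes\Gamma^{e,U'}$, which is a summand of $\Gamma^{d+e,U\oplus U'}$, to a projective), together with the chain-level pairing $\H(P_F,G)\otimes\H(P_{F'},G')\to\H(P_F\otimes P_{F'},G\otimes G')$ coming from Lemma~\ref{lm-prop-param-hom}(3); taking homology and composing with the comparison map from $H_*(\H(P_F,G))\otimes H_*(\H(P_{F'},G'))$ to $H_*$ of the tensor product of the two complexes yields the desired morphism of strict polynomial functors.

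The main obstacle I expect is the verification that $P_F\otimes P_{F'}$ genuinely resolves $F\otimes F'$ in $\P_\k$ and that the tensor product of resolutions is compatible with the $\H$-pairing at the chain level — this is the one place where one must be careful about Koszul signs, about the fact that tensor products of projectives in $\P_\k$ are projective (which needs the observation that $\Gamma^{d,U}\otimes\Gamma^{e,U'}$ is a direct summand of $\Gamma^{d+e,U\oplus U'}$, via the exponential decomposition of $\Gamma$), and about the Künneth-type comparison map being one of strict polynomial functors rather than just of graded $\k$-modules. Everything else is a routine transcription of classical homological algebra into the abelian category $\P_\k$, made legitimate by Lemma~\ref{lm-prop-param-hom}.
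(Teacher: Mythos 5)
Your argument is a detailed fleshing-out of exactly the route the paper takes: the paper defines $\E(F,G)$ as the homology of $\H(P_F,G)$ for a chosen projective resolution $P_F$ and asserts that the proposition "follows directly from Lemma~\ref{lm-prop-param-hom}," which is precisely the derived-functor transcription you carry out. Your supplementary checks (that $(-)^V$ preserves projectives via $(\Gamma^{d,U})^V=\Gamma^{d,U\otimes V}$, that $P_F^\sharp$ gives an injective coresolution for the duality claim via Lemma~\ref{lm-sym-res}, and that $\Gamma^{d,U}\otimes\Gamma^{e,U'}$ is a summand of $\Gamma^{d+e,U\oplus U'}$ for the tensor-product pairing) are correct and are the natural elaborations the paper leaves implicit.
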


\subsection{Convolution products}\label{subsec-convol}

We now introduce the convolution product `$\star$' on the parameterized $\hom$-groups between a $\P_\k$-coalgebra $C$ and a $\P_\k$-dg-algebra $A$. This convolution product can be defined more generally when $C$ is a differential graded object, see e.g. \cite[Chap. 2]{LodayValette}, but we shall only need the case when $C$ is concentrated in degree zero. So we only describe the latter case, where the signs are slightly simpler.

\begin{definition}
Let $C$ be a $\P_\k$-coalgebra and let $A$ be a $\P_\k$-dg-algebra. We denote by $\H(C,A)$ the $\P_\k$-dg-algebra defined as follows.
\begin{itemize}
\item[(i)] The homogeneous part of $\H(C,A)$ of degree $i$ and weight $d$ equals $\H(C_{0,d}, A_{i,d})$.
\item[(ii)] The differential $\partial'$ of $\H(C,A)$ is given by postcomposing by the differential of $A$:
$$ \partial':=\H(C_{0,d},\partial ):\H(C_{0,d},A_{i,d})\to \H(C_{0,d}, A_{i-1,d})\;. $$ 
\item[(iii)] The convolution product $\star$ is defined as the composite:
\begin{align*}\H(C_{0,d}, A_{i,d})\otimes \H(C_{0,e}, A_{j,e})\xrightarrow[]{\otimes}\H(C_{0,d}\otimes C_{0,d}&, A_{i,e}\otimes A_{j,e})\\\xrightarrow[]{\H(\Delta,m)} &\H(C_{0,d+e}, A_{i+j,d+e})\;.\end{align*}
\item[(iv)] The unit $\eta'$ is induced by the unit of $A$ and the counit of $C$: 
$$\eta':=\H(\epsilon,\eta):\k= \H(\k,\k)\to \H(C_{0,0},A_{0,0})\;. $$
\end{itemize}
\end{definition}

If $C$ is coaugmented (with coaugmentation $\eta:\k\to C$) and $A$ is augmented (with augmentation $\epsilon:A\to \k$), the convolution algebra $\H(C,A)$ is augmented, with augmentation $\H(\eta,\epsilon): \H(C,A)\to \H(\k,\k)=\k$. Thus, convolution algebras yield a functor:
$$\H(C,-):\{\P_\k\text{-dga-alg}\}\to \{\P_\k\text{-dga-alg}\}\;.$$
The following lemma is an easy check.

\begin{lemme}
Assume that $C$ is a \emph{commutative} $\P_\k$-coalgebra, and that $A$ is a \emph{graded commutative} $\P_\k$-dg-algebra. Then $\H(C,A)$ is graded commutative.
\end{lemme}

We have the following elementary computations of convolution
algebras.
\begin{lemme}\label{lm-element}
Let $\k$ be a commutative ring. Let $C$ be a $\P_\k$-coalgebra, and let $A$ be a
$\P_\k$-algebras. The following isomorphisms of $\P_\k$-algebras hold.
\begin{align*}
\H(\Gamma,A)\simeq A\;,\qquad \H(C,S)\simeq C^\sharp\;,\qquad
\H(S,\Gamma)\simeq \Gamma\;. 
\end{align*}
Moreover, $\H(S,\Lambda)\simeq\H(\Lambda,\Gamma)$ is isomorphic to $\Gamma$ if
$2=0$ in $\k$, and to $\Lambda$ otherwise. 
\end{lemme}
\begin{proof}
We shall only prove the case of $\H(\Lambda,\Gamma)$. First, we have:
$$\H(\Lambda^d,\otimes^d)(V)\simeq \hom_{\P_\k}(\Lambda^d,\otimes^d_V)\simeq
\hom_{\P_\k}(\Lambda^d,\otimes^d)\otimes V^{\otimes d}\;.$$
The $\k$-module $\hom_{\P_{\k}}(\Lambda^d,\otimes^d)$ is free of rank one, with
generator the antisymmetrization map $\Delta_d:\Lambda^d\to \otimes^d$. So the isomorphism above can be rewritten as 
an isomorphism of functors 
$$\H(\Lambda^d,\otimes^d)\simeq
\otimes^d\;.\qquad (*)$$
For all $\sigma\in\Si_d$, we
identify $\sigma$ with the morphism $\otimes^d\to \otimes^d$ which permutes the
factors of the tensor product. Since $\sigma\circ \Delta_d=\epsilon(\sigma)\Delta_d$, the $\Si_d$-module 
$\hom_{\P_{\k}}(\Lambda^d,\otimes^d)$, where $\sigma$ acts as $\hom(\Lambda^d,\sigma)$, is isomorphic to $\k$ with action of $\Si_d$ given by the signature. So the isomorphism $(*)$ is $\Si_d$-equivariant if
$\sigma$ acts as $\H(\Lambda^d,\sigma)$ on the left hand side, and as $\epsilon(\sigma)\sigma$ on the right hand side.

Now, $\Gamma^d$ (resp. $\Lambda^d$ if $2\ne 0$ in $\k$) is the intersection of the kernels of the maps
$\Id-\sigma:\otimes^d\to\otimes^d$ (resp. $\Id-\epsilon(\sigma)\sigma$), for $\sigma\in\Si_d$. So by left exactness of parameterized $\hom$s we obtain that $\H(\Lambda^d,\Gamma^d)$ is isomorphic to $\Lambda^d$ if $2\ne 0$ in $\k$ and to $\Gamma^d$ if $2=0$ in $\k$.

It remains to identify the products. Let $\Delta_{d,e}:\Lambda^{d+e}\to \Lambda^d\otimes\Lambda^e$ denote the comultiplication of $\Lambda$. Since $(\Delta_d\otimes\Delta_e)\circ\Delta_{d,e}=\Delta_{d+e}$, the composite
$$\H(\Lambda^d,\otimes^d)\otimes \H(\Lambda^e,\otimes^e)\xrightarrow[]{\otimes}\H(\Lambda^d\otimes\Lambda^e,\otimes^{d+e})\to \H(\Lambda^{d+e},\otimes^{d+e})$$
identifies with the identity map $(\otimes^d)\otimes(\otimes^e)\to \otimes^{d+e}$. We identify the product on $\H(\Lambda^d,\Gamma^d)$ by viewing it as a subalgebra of the signed shuffle algebra, with $\otimes^d$ in weight $d$ and with product $\sum_{s\in S(d,e)}\epsilon(\sigma)\sigma:(\otimes^d)\otimes(\otimes e)\to \otimes^{d+e}$, where $S(d,e)$ is the set of $(d,e)$-shuffles.
\end{proof}

\begin{remark}\label{rk-Chal}
In characteristic $2$, \cite[Thm 3.2]{Chalupnik2} asserts that the $\P_\k$-algebra $\H(\Lambda^d,\Gamma^d)$ is isomorphic to $\Lambda\otimes\Gamma^{(1)}$. The elementary computation of lemma \ref{lm-element} shows that this is false. (The problem in the proof of \cite[Thm 3.2]{Chalupnik2} is that there is no reason why $v\mapsto \alpha^{(i)}_s(v)$ and $v\mapsto \beta^{(i)}_s(v)$ should be $\k$-linear).
\end{remark}

We also have a convolution product (still denoted by $\star$) 
on the parameterized extension groups $\E(C,A)$ between a $\P_\k$-coalgebra and
a $\P_\k$-graded algebra (compare \cite[p. 675]{FFSS}).

\begin{definition}\label{def-conv}
Let $C$ be a $\P_\k$-coalgebra and let $A$ be a $\P_\k$-algebra. We denote by $\E(C,A)$ the $\P_\k$-graded algebra defined as follows.
\begin{itemize}
\item[(i)] The homogeneous part of degree $-i$ and weight $d$ of $\E(C,A)$ is the functor
$\E^i(C_{0,d}, A_{0,d})$.
\item[(ii)] The unit is the map $\E^0(\epsilon,\eta)=\H(\epsilon,\eta)$.
\item[(iii)] The convolution product $\star$ is defined as the composite:
\begin{align*}\E^i(C_{0,d}, A_{0,d})\otimes \E^j(C_{0,e}, A_{0,e})\xrightarrow[]{\otimes}\E^{i+j}(C_{0,d}\otimes & C_{0,e}, A_{0,d}\otimes A_{0,e})\\\xrightarrow[]{\E(\Delta,m)}& \E^{i+j}(C_{0,d+e}, A_{0,d+e})\;.\end{align*}
\end{itemize}
\end{definition}

The $\Ext$-algebras considered in \cite{FFSS} and in \cite{Chalupnik2}
correspond to our strict polynomial graded algebras $\E(X^{(r)},Y^{(s)})$, for
pairs $(X,Y)$ of classical exponential functors.

\subsection{$(1,\epsilon)$-commutativity}\label{subsec-1E-com}

The algebras $\E(C,A)$ are usually \emph{not} graded commutative, even if $C$ and $A$ are commutative. Indeed, one easily checks the following lemma (cf. \cite[Lemma 1.11]{FFSS}).

\begin{lemme}\label{lm-commute-sign}
Let $X,Y$ be a pair of classical exponential functors and let $r,s$ be nonnegative integers (take $r=s=0$ if $\k$ is not a field of positive characteristic). Let $\epsilon(S)=\epsilon(\Gamma)=0$ and let $\epsilon(\Lambda)=1$. The following diagram commutes up to a $(-1)^{ij+\epsilon(X)k\ell+\epsilon(Y)mn}$ sign.

$$
\xymatrix{
\E^i(X^{k(r)},Y^{m(s)})\otimes \E^j(X^{\ell (r)},Y^{n(s)})\ar[rd]^-{\star}\ar[dd]_-{x\otimes y\mapsto y\otimes x} &\\
& \E^{i+j}(X^{(k+\ell)\,(r)},Y^{m+n(s)})\\
\E^i(X^{\ell (r)},Y^{n(s)})\otimes \E^j(X^{k(r)},Y^{m(s)})\ar[ru]_-{\star} 
}
$$
\end{lemme}
\begin{proof}
The $(-1)^{ij}$ sign is just the usual homological sign which comes from the commutativity of tensor products at the level of chain complexes $C\otimes D\simeq D\otimes C$, see e.g. \cite[proof of lemma 6.7.12]{Weibel}. The signs $(-1)^{\epsilon(X)k\ell}$ and $(-1)^{\epsilon(Y)mn}$ are the signs needed to have the following diagrams commute.
$$ 
\xymatrix{
X^{(k+\ell)\,(r)}\ar[r]\ar[rd]& X^{k (r)}\otimes X^{\ell (r)}\ar[d]^-{\simeq}\\
& X^{\ell (r)}\otimes X^{k (r)}
}
\quad
\xymatrix{
Y^{m(s)}\otimes Y^{n(s)}\ar[d]^-{\simeq}\ar[r] &Y^{m+n(s)}\\
Y^{n(s)}\otimes Y^{m(s)}\ar[ru]&&
}
 $$
\end{proof}

In order to formalize the graded commutativity defect of the convolution product $\star$, we make the following definition.

\begin{definition}
Let $A$ be a wdg-algebra, and let $\epsilon\in\{0,1\}$. We say that $A$ is $(1,\epsilon)$-commutative if for all homogeneous elements $x,y$ in $A$ we have:
$$ x\cdot y = (-1)^{|x||y|+\epsilon w(x)w(y)} y\cdot x\;. $$ 
We say that a $\P_\k$-dg-algebra is $(1,\epsilon)$-commutative if for all $V\in\V_\k$, the wdg-algebra $A(V)$ is $(1,\epsilon)$-commutative.
\end{definition}

Thus, the $(1,0)$-commutative $\P_\k$-dg-algebras are precisely the $\P_\k$-cdg-algebras. The $\P_\k$-graded algebras $S[i]$ and $\Gamma[i]$ are $(1,0)$-commutative for $i$ even, and $(1,1)$-commutative for $i$ odd, whereas $\Lambda[i]$ is $(1,1)$-commutative for $i$ even and $(1,0)$-commutative for $i$ odd. We  reformulate lemma \ref{lm-commute-sign} in terms of $(1,\epsilon)$-commutativity.
\begin{lemme}
Let $X,Y$ be a pair of classical exponential functors and let $r,s$ be nonnegative integers (take $r=s=0$ if $\k$ is not a field of positive characteristic). Let $\epsilon(S)=\epsilon(\Gamma)=0$ and let $\epsilon(\Lambda)=1$. Let $\epsilon\in\{0,1\}$ such that $\epsilon=\epsilon(X)+\epsilon(Y)\mod 2$. Then $\E(X^{(r)},Y^{(s)})$ is $(1,\epsilon)$-commutative.
\end{lemme}

If $A$ and $B$ are wdg-algebras which are $(1,1)$-commutative, their tensor product $A\otimes B$ is not $(1,1)$-commutative. For this reason, we introduce a `signed tensor product' $\otimes^\epsilon$.

\begin{definition}
Let $\epsilon\in\{0,1\}$. And let $A$ and $B$ be wdg-algebras. We define the wdg-algebra $A\otimes^\epsilon B$ in the following way.
\begin{itemize}
\item[(i)] If $a\in A$ and $b\in B$ are homogeneous elements, the degree of $a\otimes b$ is defined by $|a\otimes b|=|a|+|b|$ and its weight by $w(a\otimes b)=w(a)+ w(b)$.
\item[(ii)] The unit of $A\otimes^\epsilon B$ is the tensor product of the units of $A$ and $B$.
\item[(iii)] For all homogeneous elements $a\in A$ and $b\in B$, the differential of $A\otimes^\epsilon B$ is given by $d(a\otimes b)=da\otimes b+(-1)^{|a|}a\otimes db$.
\item[(iv)] For all homogeneous elements $a,a'\in A$ and $b,b'\in B$, the product of $A\otimes^\epsilon B$ is given by
$$(a\otimes b)\cdot (a'\otimes b')= (-1)^{|a'||b|+\epsilon w(a')w(b)} (aa')\otimes (bb')\;.$$
\end{itemize}
\end{definition}

When $\epsilon=0$, the tensor product $\otimes^\epsilon$ is the usual tensor product of graded algebras. When $\epsilon=1$, the definition differs from the usual tensor product by the sign involved in the product. One defines a tensor product $\otimes^\epsilon$ on $\P_\k$-dg-algebras analogously. 

\begin{lemme}\label{lm-tens-E}
Let $A$ and $B$ be wdg-algebras. Then $A\otimes^\epsilon B$ is $(1,\epsilon)$-commutative if and only if $A$ and $B$ are $(1,\epsilon)$-commutative. 
\end{lemme}
\begin{proof} If $A\otimes^\epsilon B$ is $(1,\epsilon)$-commutative, then $A$ (resp. $B$) identifies with the subalgebra of the elements of the form $a\otimes 1$ (resp. $1\otimes b$), so it is also $(1,\epsilon)$-commutative.
Conversely, let $A$ and $B$ be $(1,\epsilon)$-commutative.
Let $a,a'\in A$ and $b,b'\in B$ be homogeneous elements. We denote $f(x,y)=|x||y|+\epsilon w(x)w(y)$. We have 
\begin{align*}
(a\otimes b)\cdot (a'\otimes b')&= (-1)^{f(a',b)} (aa')\otimes (bb') ,\\&= (-1)^{f(a', b)+f(a,a')+f(b,b')}(a'a)\otimes (b'b),\\
&= (-1)^{f(a', b)+f(a,a')+f(b,b')+f(a,b')}(a'\otimes b')\cdot (a\otimes b).
\end{align*}
The sign appearing on the last line equals $f(a\otimes b,a'\otimes b')$, so $A\otimes^\epsilon B$ is $(1,\epsilon)$-commutative.
\end{proof}

\section{The exponential property}\label{sec-exp}

In this section, we recall the basics of exponential functors. Archetypes of exponential functors are symmetric, exterior and divided powers, this is why we call them `classical exponential functors'. We prove that when $C$ and $A$ are exponential functors, the parameterized extension groups $\E(C,A)$ are exponential functors.

\subsection{Exponential functors}\label{subsec-exp}

\begin{definition}
Let $A$ be a $\P_\k$-graded algebra. Then $A$ is called an exponential functor if (i) each summand $A_{i,d}$ of degree $i$ and weight $d$ takes values in $\V_\k$, and (ii) the following composite is an isomorphism of graded $\k$-modules (here $\iota_V$ and $\iota_W$ are the canonical inclusions of $V$ and $W$ in $V\oplus W$, and $m$ is the product in $A$).
$$A(V)\otimes A(W)\xrightarrow[]{A(\iota_V)\otimes A(\iota_W)} A(V\oplus W)^{\otimes 2}\xrightarrow[]{m}A(V\oplus W)$$

\end{definition}

\begin{example}
If $i$ is an integer, the $\P_\k$-graded algebras $S[i]$, $\Lambda[i]$ and $\Gamma[i]$ from section \ref{subsec-strpoldgalg} are exponential functors. Also, if $F$ is an additive functor (e.g. one of the functors $V\otimes$, $\hom_\k(V,-)$ or $I^{(r)}$ from notation \ref{nota-1}), and if $A$ is an exponential functor, then the composite $A(F)=A\circ F$ is an exponential functor.
\end{example}

We refer to the isomorphism $A(V\oplus W)\simeq A(V)\otimes A(W)$ as the \emph{exponential isomorphism}. By definition, the multiplication of an exponential functor determines the exponential isomorphism. Conversely, if $A$ is an exponential functor, the multiplication of $A$ can be recovered from the exponential isomorphism, as the composite (where $\Sigma$ is the map $(x,y)\mapsto x+y$):
$$A(V)\otimes A(V)\xrightarrow[]{\simeq} A(V\oplus V)\xrightarrow[]{A(\Sigma)}A(V)\;. $$
Similarly, if $A$ is an exponential functor, the exponential isomorphism can also be used to define a comultiplication $\Delta$ on $A$, as the composite (where $\delta$ is the map $x\mapsto (x,x)$):
$$A(V)\xrightarrow[]{A(\delta)}A(V\oplus V)\xrightarrow[]{\simeq} A(V)\otimes A(V)\;. $$ 
And the exponential isomorphism can be recovered from the comultiplication $\Delta$ as the composite (where $\pi_V$ and $\pi_W$ denote the canonical projections) 
$$A(V\oplus W)\xrightarrow[]{\Delta}A(V\oplus W)^{\otimes 2}\xrightarrow[]{A(\pi_V)\otimes A(\pi_W)}A(V)\otimes A(W) \;.$$  
So, if $A$ is an exponential functor, each one of the following data determines the two other ones: the multiplication, the exponential isomorphism, the comultiplication.
\begin{example}
For the classical exponential functors $X=S$, $\Lambda$ or $\Gamma$, the coproduct determined by the usual multiplication (i.e. the one defined by the canonical maps $X^d\otimes X^e\to X^{d+e}$ indicated in section \ref{subsec-strpoldgalg}) is the usual coproduct.
\end{example}
The following lemma is straightforward, we record it for further use.

\begin{lemme}\label{lm-morph}
Let $A,B$ be exponential functors and let $f:A\to B$ be a morphism of graded functors. The following assertions are equivalent.
\begin{enumerate}
\item[(i)] $f$ is a morphism of $\P_\k$-graded algebras, 
\item[(ii)] $f$ commutes with the exponential isomorphisms,
\item[(iii)] $f$ is a morphism of $\P_\k$-graded coalgebras.
\end{enumerate}

\end{lemme}

\subsection{The exponential property for extension groups}\label{subsec-exp-appl}

We now show that (under mild hypotheses), the parameterized extension groups $\E(C,A)$ between exponential functors are an exponential functor. We begin with a statement on the $\hom$-level. 

\begin{lemme}\label{lm-facile}
Let $E$ be an exponential functor, and let $F,G$ be strict polynomial functors with values in $\V_\k$. Let us denote $E_i$ by the summand of $E$ of degree $i$: $E_i=\bigoplus_d E_{i,d}$. Assume that for all $i$, $\H(E_i,F)$ and $\H(E_i,G)$ take finitely generated projective values. Then the composite  is a graded isomorphism (where the morphism on the right is induced by the comultiplication of $E$, and the total degree is taken on the left hand side)
$$\bigoplus_{i\ge 0}\H(E_i,F)\otimes \bigoplus_{j\ge 0}\H(E_j,G) \xrightarrow[]{\otimes} \bigoplus_{i,j\ge 0}\H(E_i\otimes E_j,F\otimes G)\to  \bigoplus_{k\ge 0}\H(E_k,F\otimes G)\;.$$
Similarly, if the $\hom$-groups $\H(F,E_i)$ and $\H(G,E_i)$ are in $\V_\k$, the following composite (where the morphism on the right is induced by the multiplication of $E$, and the total degree is taken on the left hand side) is an isomorphism:
$$\bigoplus_{i\ge 0}\H(F,E_i)\otimes \bigoplus_{j\ge 0}\H(G,E_j) \xrightarrow[]{\otimes} \bigoplus_{i,j\ge 0}\H(F\otimes G, E_i\otimes E_j)\to  \bigoplus_{k\ge 0}\H(F\otimes G,E_k)\;.$$
\end{lemme}
\begin{proof} The Kuhn dual of an exponential functor is an exponential functor. So by lemma \ref{lm-prop-param-hom}, the first isomorphism of lemma \ref{lm-facile} is equivalent to the second one via Kuhn duality. Thus we only prove the latter.

Fix $V\in\V_\k$ and denote by $A_{i,d}$ the functor $(E_{i,d})_V$ and by $[F,A_{i,d}]$ the $\k$-module $\hom_{\P_\k}(F,A_{i,d})$. We have to prove that the following composite, called $\Phi$ in the sequel, is an isomorphism. 
$$\bigoplus_{i+j=k}[F,A_{i,d}]\otimes [G,A_{j,e}] \xrightarrow[]{\otimes} \bigoplus_{i+j=k}[F\otimes G, A_{i,d}\otimes A_{j,e}]\to  [F\otimes G,A_{k,d+e}]  $$

{\bf Case $F=\Gamma^{d\,X}$, $G=\Gamma^{e\, Y}$.}
In this case, the source and the target of $\Phi$ are bifunctors with variables $X\in\Gamma^{d}\V_\k$ and $Y\in\Gamma^e\V_\k$, and $\Phi$ is a natural transformation of bifunctors. 
But $F\otimes G$ identifies as the direct summand of weight $d$ with respect to $X$ and weight $e$ with respect to $Y$ of the bifunctor $\Gamma^{\ell,X\oplus Y}$. So $\Phi$ is the homogeneous part of weight $d$ with respect to $X$ and $e$ with respect to $Y$ of the morphism (where the downwards map 
 is induced by the multiplication of $A$ and by the diagonal $\Gamma^{\ell\,X\oplus Y}\to \Gamma^{d\,X}\otimes\Gamma^{e\,Y}$):
$$\xymatrix{
\displaystyle\bigoplus_{\text{\tiny$\begin{array}{c}i+j=k\\d+e=\ell\end{array}$}}[\Gamma^{d\,X},A_{i,d}]\otimes [\Gamma^{e\, Y},A_{j,e}] \ar[r]^-{\otimes}& \displaystyle\bigoplus_{\text{\tiny$\begin{array}{c}i+j=k\\d+e=\ell\end{array}$}}[\Gamma^{d\,X}\otimes \Gamma^{e\, Y}, A_{i,d}\otimes A_{j,e}]\ar[d]\\ &  [\Gamma^{\ell\,X\oplus Y},A_{k,\ell}] }$$
The latter identifies through the Yoneda isomorphism with the composite
$$\xymatrix{
\displaystyle\bigoplus_{\text{\tiny$\begin{array}{c}i+j=k\\d+e=\ell\end{array}$}}A_{i,d}(X)\otimes A_{j,e}(Y) \ar[r]^-{\otimes}& \displaystyle\bigoplus_{\text{\tiny$\begin{array}{c}i+j=k\\d+e=\ell\end{array}$}}A_{i,d}(X\oplus Y)\otimes A_{j,e}(X\oplus Y)\ar[d]\\
&  A_{k,\ell}(X\oplus Y)\;, 
}$$
where the downwards map is induced by the multiplication of $A(X\oplus Y)$.
But $A=E_V$ is an exponential functor, so the latter composite is an isomorphism. Whence the result for $F=\Gamma^{d\,X}$, $G=\Gamma^{e\, Y}$.

{\bf General case.} By additivity of (parameterized) $\hom$ groups, we can restrict to the case of homogeneous functors $F\in\P_{d,\k}$ and $G\in \P_{e,\k}$.
Now $[F,A_{i,d}] $ and $[G,A_{j,e}]$ are projective $\k$-modules, and so are $[\Gamma^{d,X},A_{i,d}] $ and $[\Gamma^{e,Y},A_{j,e}] $ (by the Yoneda lemma, since the homogeneous summands of $A$ have values in $\V_\k$). Hence the result for arbitrary $F,G$ follows from the result for $F=\Gamma^{d,X}$ and $G=\Gamma^{e,Y}$ by left exactness of $\hom$s, when taking projective presentations of $F$ and $G$.
\end{proof}

\begin{remark}
Alternatively, one can prove lemma \ref{lm-facile} by using first the sum-diagonal adjunction as in the proof of \cite[Thm 1.7]{FFSS}, and then identifying the isomorphism obtained, as in \cite[Lemma 5.13]{TouzeClassical}. 
\end{remark}

If $\k$ is a field, then the hypothesis that the $\hom$-groups $\H(E^i,F)$ and $\H(E^i,G)$ are finitely generated and projective over $\k$ is automatically satisfied. The next result follows from lemma \ref{lm-facile} by taking projective resolutions and using the K\"unneth formula.

\begin{lemme}[Compare {\cite[Thm 1.7]{FFSS}}]\label{lm-facile-Ext}
Let $\k$ be a field, let $E$ be an exponential functor, and let $F,G$ be strict polynomial functors with values in $\V_\k$. The following composite is an isomorphism.
$$ \bigoplus_{i+j=k}\E(F,E_i)\otimes \E(F,E_j) \xrightarrow[]{\otimes} \bigoplus_{i+j=k}\E(F\otimes G,E_i\otimes E_j)\to \E(F\otimes G,E_k) $$
\end{lemme}

\begin{proposition}\label{prop-expo-inut}
Let $\k$ be a field, let $X,Y$ be exponential functors concentrated in degree $0$ (i.e. $X_{i,d}=Y_{i,d}=0$ for $i>0$). The $\P_\k$-graded algebra $\E(X,Y)$ is a an exponential functor.
\end{proposition}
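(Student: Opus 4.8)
The plan is to realize $\E(X,Y)$ as the functor $V\mapsto \Ext^*_{\P_\k}(X^V,Y)$ via Proposition~\ref{prop-strpolstruct} (it is a $\P_\k$-graded algebra via the convolution product $\star$ of Definition~\ref{def-conv}, since $X$ is a $\P_\k$-coalgebra and $Y$ a $\P_\k$-algebra), and then to reduce the exponential property to Lemma~\ref{lm-facile-Ext}. Condition~(i) in the definition of an exponential functor is free here: each homogeneous summand of $\E(X,Y)$ is a functor of the form $V\mapsto\Ext^i_{\P_\k}((X^d)^V,Y^d)$, which has finite dimensional values by Lemma~\ref{lm-hom-k-mod} because $\k$ is a field. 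So the whole content is to show that the composite
$$\E(X,Y)(V)\otimes\E(X,Y)(W)\xrightarrow{\E(X,Y)(\iota_V)\otimes\E(X,Y)(\iota_W)}\E(X,Y)(V\oplus W)^{\otimes2}\xrightarrow{\ \star\ }\E(X,Y)(V\oplus W)$$
is an isomorphism.

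First I would unwind the two ingredients of this composite. Since $\hom_\k(-,U)$ is additive and $X$ is exponential, the assignment $V'\mapsto X^{V'}$ turns direct sums into tensor products: the canonical maps assemble into an isomorphism of $\P_\k$-coalgebras $q\colon X^{V\oplus W}\xrightarrow{\ \sim\ }X^V\otimes X^W$. Writing $p_V\colon X^{V\oplus W}\to X^V$ for the morphism of $\P_\k$ induced by the inclusion $\iota_V$ (so that $\E(X,Y)(\iota_V)=(p_V)^*$), a check along the lines of section~\ref{subsec-exp}, recovering an exponential isomorphism from the comultiplication, gives $q=(p_V\otimes p_W)\circ\Delta$, where $\Delta$ is the comultiplication of the $\P_\k$-coalgebra $X^{V\oplus W}$. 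Then, unravelling Definition~\ref{def-conv} of $\star$ and using the naturality of the external product $\otimes$, one computes for $\alpha\in\Ext^*_{\P_\k}(X^V,Y)$ and $\beta\in\Ext^*_{\P_\k}(X^W,Y)$ (with $m_Y\colon Y\otimes Y\to Y$ the multiplication of $Y$, and using that pullback and pushforward commute):
\begin{align*}
(p_V)^*\alpha\,\star\,(p_W)^*\beta &=\Delta^*(m_Y)_*\big((p_V)^*\alpha\otimes(p_W)^*\beta\big)\\
&=\Delta^*(m_Y)_*(p_V\otimes p_W)^*(\alpha\otimes\beta)=q^*(m_Y)_*(\alpha\otimes\beta).
\end{align*}

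Hence the composite above equals the external product $\Ext^*_{\P_\k}(X^V,Y)\otimes\Ext^*_{\P_\k}(X^W,Y)\to\Ext^*_{\P_\k}(X^V\otimes X^W,Y\otimes Y)$ followed by $(m_Y)_*$ and then by $q^*$. As $q$ is invertible, so is $q^*$, so it is enough to show that
$$\Ext^*_{\P_\k}(X^V,Y)\otimes\Ext^*_{\P_\k}(X^W,Y)\xrightarrow{\ \otimes\ }\Ext^*_{\P_\k}(X^V\otimes X^W,Y\otimes Y)\xrightarrow{\ (m_Y)_*\ }\Ext^*_{\P_\k}(X^V\otimes X^W,Y)$$
is an isomorphism. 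Since $Y$ is concentrated in degree $0$, this is precisely the content of Lemma~\ref{lm-facile-Ext} applied to the exponential functor $Y$ and to the pair of strict polynomial functors $X^V$, $X^W$ (the K\"unneth formula being available because $\k$ is a field). This closes the argument.

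The step that will require care is the identification $q=(p_V\otimes p_W)\circ\Delta$, i.e.\ the fact that the exponential isomorphism of the parameterized coalgebra $V'\mapsto X^{V'}$ is recovered from its comultiplication exactly as in section~\ref{subsec-exp}; concretely the one subtle point is the counit bookkeeping showing that, under $q$, the map $p_V$ corresponds to $\Id\otimes\epsilon\colon X^V\otimes X^W\to X^V$, with $\epsilon$ the counit of $X^W$. Everything else — the displayed computation and the appeal to Lemma~\ref{lm-facile-Ext} — is formal.
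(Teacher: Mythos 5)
Your proof is correct, and it is essentially the dual of the paper's. The paper uses the ``right'' parametrization $\E(X,Y)(V)=\Ext^*_{\P_\k}(X,Y_V)$, factors the composite through $\E(X,Y_V)(\k)\otimes\E(X,Y_W)(\k)\to\E(X,Y_V\otimes Y_W)(\k)\to\E(X,Y_{V\oplus W})(\k)$, invokes Lemma~\ref{lm-facile-Ext} for the first arrow, and uses the exponential isomorphism $Y_V\otimes Y_W\simeq Y_{V\oplus W}$ for the second. You instead use the ``left'' parametrization $\E(X,Y)(V)=\Ext^*_{\P_\k}(X^V,Y)$, rewrite the source via the exponential isomorphism $q\colon X^{V\oplus W}\simeq X^V\otimes X^W$, and invoke Lemma~\ref{lm-facile-Ext} with exponential target $E=Y$. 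The arguments are symmetric and of the same length; one small advantage of your route is that it appeals to Lemma~\ref{lm-facile-Ext} in exactly the form in which it is displayed (exponential target), whereas the paper's first arrow really invokes the exponential-source variant, which is only explicitly stated at the $\H$-level in Lemma~\ref{lm-facile}. Your explicit verification of $q=(p_V\otimes p_W)\circ\Delta$ and the unwinding of $\star$ via bifunctoriality of $\Ext$ are correct and spell out steps the paper leaves tacit.
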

\begin{proof} First, since $\k$ is a field and the functors $X_{0,d}$ and $Y_{0,d}$ take finite dimensional values, the functors $\E^i(X_{0,d},Y_{0,d})$ take finite dimensional values. So it remains to check the exponential isomorphism.
The following composite (where the morphism on the left hand side is induced by the canonical inclusions of $V$, $W$ in $V\oplus W$, and the morphism on the right hand side is given by the convolution product of $\E(X,Y)$) :
$$\E(X,Y)(V)\otimes \E(X,Y)(W)\to \E(X,Y)(V\oplus W)^{\otimes 2}\to \E(X,Y)(V\oplus W)$$
is an isomorphism. Indeed, it equals the composite
$$\E(X,Y_{V})(\k)\otimes \E(X,Y_{W})(\k)\to \E(X,Y_{V}\otimes Y_{W})(\k)\to \E(X,Y_{V\oplus W})(\k), $$
where the first map is the isomorphism of lemma \ref{lm-facile-Ext} and the second one is induced by the isomorphism $Y_{V}\otimes Y_{W}\simeq Y_{V\oplus W}$.
\end{proof}

\begin{remark}\label{rk-intro}
We make no use of proposition \ref{prop-expo-inut} in this article. We have stated it only to justify that it is \emph{a priori} not worthy to care about the coproduct on $\E(X,Y)$ as we claimed it in the introduction. Indeed, as observed in section \ref{subsec-exp}, if we know $\E(X,Y)$ as a $\P_\k$-graded algebra, the coproduct is determined by the product (thus, on our computation in section \ref{sec-final-results}, the obvious candidate for the coproduct is the good one!). Notice that such a reasoning does not work if one restricts to computing the unparameterized extension groups $\Ext^*_{\P_\k}(X^*,Y^*)=\E(X,Y)(\k)$, because the functoriality is needed to recover the coproduct.
\end{remark}

\part{Extension groups between $S$, $\Lambda$, $\Gamma$, and bar constructions}\label{part-2}

In this part, $\k$ is a commutative ring. We compute the $\P_\k$-graded algebras $\E(S,\Lambda)$ and $\E(S,\Gamma)$ in terms of the homology of the (iterated) bar constructions of $\Gamma$. We first introduce regrading functors
$$\R_\alpha:\{\P_\k\text{-dg-alg}\}\to \{\P_\k\text{-dg-alg}\}\;.$$
These functors automatically take care of all the strange signs which arise in the computations.
Then, we prove that $\E(S,\Lambda)$ and $\E(S,\Gamma)$ are equal to the homology of the $\P_\k$-dg-algebras $^t\R_{2i+1}\B(\Gamma[2i])$ and $\R_{2i+2}\B^2(\Gamma[2i])$. This is done in theorem \ref{thm-bar}, which generalizes the  main theorem of \cite{A}\footnote{The main theorem of \cite{A} corresponds to the case of $\E(S,\Lambda)(\k)$ (hence without functoriality), without the algebra structure.}. The key point in the proof is the interchange property of proposition \ref{prop-key}, which is very specific to exponential functors.

\section{Regrading functors}\label{sec-regrading}

\begin{definition}
Let $\alpha$ be an integer and let $\big(A,\eta,m,\partial\big)$ be a strict polynomial differential graded algebra. The regraded algebra is the algebra $(\R_\alpha A,\R_\alpha\eta,\R_\alpha m,\R_\alpha \partial)$  defined by the following formulas.
\begin{enumerate}
\item[(i)] $(\R_\alpha A)_{i,d}= A_{i+\alpha d,d}$.
\item[(ii)] $\R_\alpha\eta=\eta$
\item[(iii)] $\R_\alpha m: (\R_\alpha A)_{i,d}\otimes (\R_\alpha A)_{j,e}\to (\R_\alpha A)_{i+j,d+e}$ equals $(-1)^{\alpha (i+d)e}m$.
\item[(iv)] $\R_\alpha \partial:(\R_\alpha A)_{i,d}\to (\R_\alpha A)_{i-1,d}$ equals $(-1)^{\alpha d}\partial$.
\end{enumerate}
\end{definition}

Let $V\in\V_\k$ be a finitely generated projective $\k$-module, and let $x,y$ be a pair of homogeneous elements of the weighted graded $\k$-module $A(V)$. We denote by $|x|$ the degree of $x$ in $A(V)$ and by $w(x)$ its weight. We denote by $s^{\alpha w(x)}x$ and $s^{\alpha w(y)}y$ the same elements, viewed as elements of the weighted graded $\k$-module $\R_\alpha A(V)$. Then the definitions of the product and of the differential take the more suggestive form (where the signs which appear are determined by the usual Koszul sign rule, see e.g. \cite[Chap 1]{LodayValette}):
\begin{align*}&s^{\alpha w(x)}x\cdot s^{\alpha w(y)}y:= (-1)^{\alpha|x|w(y)} s^{\alpha w(x\otimes y)}x\cdot y\;,\\&(R_\alpha\partial)(s^{\alpha w(x)}x):= (-1)^{\alpha w(x)}s^{\alpha w(x)}(\partial x)\;. 
\end{align*}
It is not hard to see from these formulas that the definition of $\R_\alpha A$ makes sense, i.e. that the product $\R_\alpha m$ is associative, and the differential $R_\alpha\partial$ is a derivation. 

\begin{definition}
If $f:A\to B$ is a morphism of $\P_\k$-dg-algebras, we define $\R_\alpha f$ by $(\R_\alpha f)_{i,d}=f_{i+\alpha d,d}$. 
This yields a regrading functor: 
$$\R_\alpha:\{\P_\k\text{-dg-alg}\}\to \{\P_\k\text{-dg-alg}\}\;.$$
\end{definition}

Let us denote by $H$ the homology functor $H:\{\P_\k\text{-dg-alg}\}\to\{\P_\k\text{-g-alg}\}$, and recall from section \ref{subsec-convol} the convolution algebra functor $\H(C,-)$. It is easy to check that the regrading functor commutes with these two functors.

\begin{lemme}\label{lm-prop-facile}
Let $\alpha$ be an integer and let $C$ be a $\P_\k$-coalgebra. For all $\P_\k$-dg-algebra $A$, we have equalities of $\P_\k$-graded-algebras, and $\P_\k$-differential graded algebras:
$$\R_\alpha (HA)=H(\R_\alpha A)\;,\qquad \R_\alpha\H(C,A)=\H(C,\R_\alpha A)\;.$$ 
\end{lemme}

If $\alpha$ is even, one easily checks that $\R_\alpha$ is an isomorphism of categories, with inverse $\R_{-\alpha}$. This is not the case if $\alpha$ is odd. We first need to introduce a new definition.

\begin{definition}
Let $(A,\eta,m,\partial)$ be $\P_\k$-dg-algebra. The weight twisted algebra $^\t A$ is the $\P_\k$-dg-algebra $(A,\eta,{^\t m},\partial)$ where $^t m:  A_{i,d}\otimes A_{j,e}\to A_{i+j,d+e}$ equals $(-1)^{de}m$. If $f:A\to B$ is a morphism of algebras, we define $^\t f=f$. This yields an involutive functor:
$$^\t:\{\P_\k\text{-dg-alg}\}\to \{\P_\k\text{-dg-alg}\}\;.$$
\end{definition}

\begin{lemme}\label{lm-propt}
Let $A,B$ be $\P_\k$-dg-algebras, and let $\epsilon\in\{0,1\}$.
\begin{itemize}
\item[(i)] $A$ is $(1,\epsilon)$-commutative if and only if $^\t A$ is.
\item[(ii)] There is an isomorphism of $\P_\k$-dg-algebras $^\t(A\otimes^\epsilon B)\simeq({}^\t A)\otimes^\epsilon ({}^\t B)$.
\end{itemize}
\end{lemme}
\begin{proof} Let $V\in\V_\k$, and let $a,a'\in A(V)$ and $b,b'\in B(V)$ be homogeneous elements. Let us prove $(i)$. Since $^\t$ is an involution, it suffices to prove the only if part. So we assume that $A$ is $(1,\epsilon)$-commutative. We denote by $a\cdot^\t a'$ the product in $A(V)$ and by $a\cdot a'$ the product in $A(V)$. We have:
\begin{align*}a\cdot^\t a'=(-1)^{w(a)w(a')}a\cdot a'&=(-1)^{w(a)w(a')+|a||a'|+\epsilon w(a)w(a')}a'\cdot a\\&= (-1)^{|a||a'|+\epsilon w(a)w(a')}a'\cdot^\t a\;.\end{align*}
Thus, $^\t A$ is $(1,\epsilon)$-commutative. Now we turn to $(ii)$. Let $\psi(x,y)=w(x)w(y)$. We define: 
$$\begin{array}{cccc}
\Psi : &  {}^\t(A\otimes^\epsilon B) & \to & (^\t A)\otimes^{\epsilon} (^\t B)\\
& a\otimes b &\mapsto & (-1)^{\psi(a,b)}a\otimes b
\end{array}.$$
It is straightforward to check that $\Psi$ preserves the degrees, the weights, the units and the differentials. We have to check that $\psi$ is multiplicative. We denote  $f(x,y)=|x||y|+\epsilon w(x)w(y)$. In ${}^\t(A\otimes^\epsilon B)$, the product is defined by:
$$(a\otimes b)\cdot (a'\otimes b')=(-1)^{\psi(a\otimes b,a'\otimes b')+f(a',b)}aa'\otimes bb'\;. $$
In $(^\t A)\otimes^{\overline{\epsilon}} (^\t B)$, the product is defined by:
$$(a\otimes b)\cdot (a'\otimes b')=(-1)^{\psi(a, b)+\psi(a',b')+f(a',b)}aa'\otimes bb'\;. $$
Then $\Psi((a\otimes b)\cdot (a\otimes b'))$ equals $\Psi(a\otimes b)\cdot \Psi(a\otimes b')$ up to a $(-1)^n$ sign, where $n$ equals the sum:
$$\psi(a\otimes b,a'\otimes b')+2f(a',b)+\psi(aa',bb')+\psi(a,b)+\psi(a',b')+\psi(a,a')+\psi(b,b')\;.$$
One readily checks that this sum is even. Hence $\psi$ is multiplicative.
\end{proof}

\begin{lemme}\label{lm-RalphaR-alpha}Let $\alpha$ be an integer. We have $^\t \R_\alpha = \R_\alpha {^\t}$.  If $\alpha$ is even, we have 
$\R_{-\alpha} \R_\alpha =\Id$. If $\alpha$ is odd, we have $\R_{-\alpha} \R_\alpha=\,^\t$.
\end{lemme}
\begin{proof}
The proof that $^\t$ and $\R_\alpha$ commute is straightforward. We concentrate on the identification of $\R_{-\alpha}\R_\alpha$. Let $A$ be a $\P_\k$-dg-algebra. It is clear that $\R_{-\alpha}\R_\alpha A= (A,\eta,\R_{-\alpha}\R_\alpha m,\partial)$, so we only have to identify the product of $\R_{-\alpha}\R_\alpha A$. Let $V\in\V_\k$, and let $x,y$ be homogeneous elements in $A(V)$. We define $\varpi(x)=\alpha w(x)$. Then we have:
\begin{align*}
s^{-\varpi(x)}s^{\varpi(x)} x\cdot s^{-\varpi(y)} s^{\varpi(y)} y &= (-1)^{-\varpi(y)(\varpi(x)+|x|)}s^{-\varpi(x\otimes y)} (s^{\varpi(x)} x\cdot s^{\varpi(y)} y)\;,\\
&=(-1)^{-\varpi(y)\varpi(x)}s^{-\varpi(x\otimes y)}s^{\varpi(x\otimes y)}x\cdot y\;.
\end{align*} If $\alpha$ is even, the sign in the latter equality equals one, so $\R_{-\alpha}\R_\alpha A=A$. If $\alpha$ is odd, the sign equals $(-1)^{w(x)w(y)}$ so $\R_{-\alpha}\R_\alpha A={^\t}A$.
\end{proof}

In particular, $\R_\alpha$ is an isomorphism of categories. The behavior of $\R_\alpha$ with respect to graded commutativity also depends on the parity of $\alpha$.

\begin{lemme}
Let $\alpha$ be an integer, let $\epsilon\in\{0,1\}$ and let $\overline{\epsilon}\in\{0,1\}$ such that $\overline{\epsilon}=\epsilon+1\mod 2$. Let $A$ be a $\P_\k$-dg-algebra. 
\begin{itemize}
\item If $\alpha$ is even, then $\R_\alpha A$ is $(1,\epsilon)$-commutative if and only if $A$ is $(1,\epsilon)$-commutative.
\item If $\alpha$ is odd, then $\R_\alpha A$ is $(1,\epsilon)$-commutative if and only if $A$ is $(1,\overline{\epsilon})$-commutative.
\end{itemize}
\end{lemme}
\begin{proof}
By lemma \ref{lm-propt} $^\t$ preserves $(1,\epsilon)$-commutativity. Thus, by lemma \ref{lm-RalphaR-alpha}, it suffices to prove the `only if' part of the statements. Let us assume that $A$ is $(1,\epsilon)$-commutative. Let $V\in\V_\k$. Let $x,y$ be homogeneous elements of $A(V)$. We define $\varpi(x)=\alpha w(x)$. Then we have:
\begin{align*}
s^{\varpi(y)} y\cdot s^{\varpi(x)} x &= (-1)^{|y|\varpi(x)} s^{\varpi(x\otimes y)} y\cdot x \\&= (-1)^{|y|\varpi(x)+|x||y|+\epsilon w(x)w(y)}s^{\varpi(x\otimes y)}x\cdot y\;,\\
s^{\varpi(x)} x\cdot s^{\varpi(y)} y &= (-1)^{|x|\varpi(y)} s^{\varpi(x\otimes y)} x\cdot y\;.
\end{align*} 
Thus $s^{\varpi(y)} y\cdot s^{\varpi(x)} x$ equals $s^{\varpi(x)} x\cdot s^{\varpi(y)} y$ up to a $(-1)^n$ sign, with
\begin{align*}n= &|y|\varpi(x)+|x||y|+ |x|\varpi(y)+\epsilon w(x)w(y)\\
= & (|x|+\varpi(x))(|y|+\varpi(y)) + (\alpha+\epsilon) w(x)w(y) \mod 2\;.
\end{align*}
But $|x|+\varpi(x)$ is the degree of $s^{\varpi(x)} x\in\R_\alpha A(V)$, thus the last equality shows that $\R_\alpha A$ is $(1,\epsilon+\alpha)$-commutative.
\end{proof}

\begin{lemme}\label{lm-Rodd-prod}
Let $\alpha$ be an integer, let $\epsilon\in\{0,1\}$ and let $\overline{\epsilon}\in\{0,1\}$ such that $\overline{\epsilon}=\epsilon+1\mod 2$. If $\alpha$ is even, then for all $\P_\k$-dg-algebras $A$, $B$ there is an isomorphism of $\P_\k$-dg-algebras:
$$R_\alpha(A\otimes^\epsilon B)\simeq \R_\alpha(A)\otimes^\epsilon \R_{\alpha}(B)\;.  $$
If $\alpha$ is odd, there is an isomorphism of $\P_\k$-dg-algebras:
$$R_\alpha(A\otimes^\epsilon B)\simeq \R_\alpha(A)\otimes^{\overline{\epsilon}} \R_{\alpha}(B)\;.  $$
\end{lemme}
\begin{proof}We do the proof for $\alpha$ odd (the case $\alpha$ even is elementary).
We define $\varpi(x)=\alpha w(x)$. If $a,a'\in A$ and $b,b'\in B$ are homogeneous elements, we denote by $s^{\varpi(a\otimes b)}a\otimes b$ the element $a\otimes b$ viewed as an element of $R_\alpha(A\otimes^\epsilon B)$, and by $(s^{\varpi(a)}a)\otimes (s^{\varpi(b)}b)$ the element $a\otimes b$ viewed as an element of $\R_\alpha(A)\otimes^{\overline{\epsilon}} \R_{\alpha}(B)$. We also define $\phi(a,b)= |a|\varpi(b)$. It is straightforward to check that the map
$$\begin{array}{cccc}
\Phi : &  R_\alpha(A\otimes^\epsilon B) & \to & \R_\alpha(A)\otimes^{\overline{\epsilon}} \R_{\alpha}(B)\\
& s^{\varpi(a\otimes b)}a\otimes b &\mapsto & (-1)^{\phi(a,b)}(s^{\varpi(a)}a)\otimes (s^{\varpi(b)}b)
\end{array}$$
preserves the degrees, the weights, the units and the differentials. We have to prove that $\Phi$ preserves the products. Let us write for short $f(x,y)=|x||y|+\epsilon w(x)w(y)$ and $g(x,y)=(\varpi(x)+|x|)(\varpi(y)+|y|)+\overline{\epsilon} w(x)w(y)$. The multiplication in $R_\alpha(A\otimes^\epsilon B)$ is defined by:
\begin{align*}
\left(s^{\varpi(a\otimes b)}a\otimes b\right)\cdot &\left(s^{\varpi(a'\otimes b')}a'\otimes b'\right) \\&=(-1)^{\phi(a\otimes b,a'\otimes b')}s^{\varpi(a\otimes b\otimes a'\otimes b')}(a\otimes b)\cdot (a'\otimes b')\;,\\ 
&= (-1)^{\phi(a\otimes b,a'\otimes b')+f(a',b)} s^{\varpi(aa')+\varpi(bb')}(aa')\otimes (bb')\;.
\end{align*}
The multiplication in $\R_\alpha(A)\otimes^{\overline{\epsilon}} \R_{\alpha}(B)$ is defined by:
\begin{align*}
\left(s^{\varpi(a)}a\otimes  s^{\varpi(b)}b\right)\cdot &\left(s^{\varpi(a')}a'\otimes s^{\varpi(b')}b'\right) \\
&=(-1)^{g(b,a')}\left(s^{\varpi(a)}a\cdot s^{\varpi(a')}a'\right)\otimes \left(s^{\varpi(b)}b\cdot s^{\varpi(b')}b'\right)\;,\\
&=(-1)^{g(b,a')+\phi(a,a')+\phi(b,b')} (s^{\varpi(aa')}aa')\otimes (s^{\varpi(bb')}bb')\;.
\end{align*}
Thus, we obtain:
$$\Phi(s^{\varpi(a\otimes b)}a\otimes b\cdot s^{\varpi(a'\otimes b')}a'\otimes b')= (-1)^n\Phi(s^{\varpi(a\otimes b)}a\otimes b)\cdot \Phi(s^{\varpi(a'\otimes b')}a'\otimes b')\;, $$
where the integer $n$ equals the following sum:
$$g(b,a')+\phi(a,a')+\phi(b,b')+\phi(a\otimes b,a'\otimes b')+f(a',b)+\phi(a,b)+\phi(a',b')+\phi(aa',bb').$$
One readily checks that this sum is even. So $\Phi$ preserves products.
\end{proof}
We finish this section by an explicit computation involving $\R_{\alpha}$. Recall that $X[i]$ denotes the $\P_\k$-graded algebra with $X^d$ in degree $di$ and weight $d$, equipped with multiplication given by the canonical maps $X^d\otimes X^e\to X^{d+e}$. We also denote by $X[i]^{(r)}$ the $\P_\k$-graded algebra $X[i]$ precomposed by the Frobenius twist (so it has $X^{d\,(r)}$ in degree $di$ and weight $pd$).

\begin{lemme}\label{lm-exe} Let $X$ be $S,\Lambda$ or $\Gamma$.
Assume that $\alpha$ is even or that $i$ is even. Then we have the following equalities of $\P_\k$-graded algebras.
$$R_\alpha \left(X[i]^{(r)}\right)=X[i-\alpha p^r]\;.$$ 
Assume that both $\alpha$ and $i$ are odd. Then we have the following equalities of $\P_\k$-graded algebras.
$$R_\alpha \left(X[i]^{(r)}\right)={^\t \left(X[i-\alpha p^r]^{(r)}\right)}\;.$$ 
\end{lemme}
\begin{proof} The equalities of lemma \ref{lm-exe} are straightforward as equalities of graded strict polynomial functors. Let us identify the multiplication on $R_\alpha \left(X[i]^{(r)}\right)$. Let $V\in\V_\k$ and let $x,y$ be homogeneous elements in $X[i]^{(r)}(V)$. Then:
$$s^{\alpha w(x)} x\cdot s^{\alpha w(y)} y= (-1)^{\alpha|x|w(y)} s^{\alpha w(x\otimes y)} x\cdot y\;.$$
If $\alpha$ is even the sign equals $1$. If alpha is odd, and $i$ is even, all elements of $X[i](V)$ are in even degree, so the sign also equals $1$. If $\alpha$ and $i$ are odd, then $\alpha|x|=w(x)\mod 2$, so the sign equals $(-1)^{w(x)w(y)}$. Whence the result. 
\end{proof}

\section{Bar constructions and extension groups}

\subsection{Recollections of bar constructions}\label{subsubsec-recoll-bar} 

Let $A$ be a differential graded augmented $\k$-algebra (dga-$\k$-algebra). We denote by $\epsilon$ its augmentation and by $A'=\ker \epsilon$ the augmentation ideal of $A$. The degree of an homogeneous element $a\in A$ is denoted by $|a|$. The (reduced, normalized) bar construction over $A$ is the 
differential graded coaugmented $\k$-coalgebra $\overline{B}A$ defined as follows see e.g. \cite[X.10]{ML} or \cite[Chap. 2]{LodayValette}. 

\begin{itemize}
\item $\overline{B}A$ equals $\bigoplus_{n\ge 0} {A'}^{\otimes n}$ as a $\k$-module.
\item A scalar $\lambda$ of $\k={A'}^{\otimes 0}$ is denoted by $\lambda[]$ and has degree $0$. For $n\ge 1$, let $a_i$ be homogeneous elements of $A$, $1\le i\le n$. The element $a_1\otimes\dots\otimes a_n\in {A'}^{\otimes n}$ is denoted by $[a_1|\dots|a_n]$ and has degree $n+\sum |a_i|$.
\item The counit is the map $\B A\twoheadrightarrow {A'}^{\otimes 0}=\k$, and the coaugmentation is the map $\k={A'}^{\otimes 0}\hookrightarrow \B A$.
\item The differential $\partial:\overline{B}A_k\to \overline{B}A_{k-1}$ sends an element $[a_1|\dots|a_n]$ to the sum:
$$\sum_{i=1}^{n-1}(-1)^{e_i}[a_1|\dots|a_ia_{i+1}|\dots|a_n]-\sum_{i=1}^{n}(-1)^{e_{i-1}}[a_1|\dots|\partial a_i|\dots|a_n]\;,  $$
where $e_0=0$ and for $i\ge 1$, $e_i$ equals $i+\sum_{j\le i}|a_i|$.
\item The coproduct $\Delta: \overline{B}A\to \overline{B}A\otimes \overline{B}A$ sends an element $[a_1|\dots|a_n]$ to the sum $$\sum_{i=0}^{n} [a_1|\dots|a_i]\otimes [a_{i+1}|\dots|a_n]\;.$$
\end{itemize} 

When $A$ is \emph{graded commutative}, we can define a `shuffle product' on $\overline{B}A$, which makes $\overline{B}A$ into a cdga-$\k$-algebra. 
So we can iterate bar constructions, and we denote by $\overline{B}^n A$ the $n$-th iterated bar construction of $A$.
To be more specific, if $a_i$ are homogeneous elements of $A$, the shuffle product
$[a_1|\dots|a_p]*[a_{p+1}|\dots|a_{p+q}]$ equals
$$\sum\epsilon(\sigma)\,[a_{\sigma^{-1}(1)}|\dots |a_{\sigma^{-1}(p+q)}] $$
where the sum is taken over all $(p,q)$-shuffles $\sigma$, and $\epsilon(\sigma)$ is the Koszul sign such that $x_1\wedge \dots \wedge x_n= \epsilon(\sigma)x_{\sigma(1)}\wedge \dots \wedge x_{\sigma(n)} $ in the exterior algebra $\Lambda(x_1,\dots,x_n)$ over generators $x_i$ with degree $|a_i|+1$.

We have presented the (reduced, normalized) bar construction of a dga-$\k$-algebra, but the formulas above also make sense for the other categories of algebras described in section \ref{sec-hierarchy} (if the algebras have weights, the weights in $\B A$ are defined by $w([a_1|\dots|a_n])=\sum w(a_i)$).  In particular, for $\P_\k$-cdga-algebras, iterated bar constructions yield functors ($n\ge 0$):
$$
\{\P_\k\text{-cdga-alg}\}\xrightarrow[]{\B^n}\{\P_\k\text{-cdga-alg}\}.
$$

\subsection{Bar constructions of symmetric and exterior algebras}\label{subsubsec-bar-sym-ext}

Let us now concentrate on the concrete example of bar constructions of symmetric and exterior algebras. Recall that for all integer $i$, $S[i]$ (resp. $\Lambda[i]$) denotes the $\P_\k$-graded algebra, with $S^d$ (resp. $\Lambda^d$) in degree $di$ and weight $d$. We can write down explicitly $\B(S[i])$. 
For $d\ge 0$, the homogeneous part of weight $d$ of $\overline{B}(S[i])$ is the following complex of strict polynomial functors, which we denote by $I_{\bullet,d}$:
$$ \underbrace{{\otimes^d}}_{\text{\footnotesize$\begin{array}{c}\text{degree}\\ $d(i+1)$\end{array}$}}\to \bigoplus_{k=0}^{d-2}S^{1\,\otimes k}\otimes S^2\otimes S^{1\,\otimes d-k-2} \to \dots \to {\begin{array}{c}S^{d-1}\otimes S^1\\\oplus S^1\otimes S^{d-1} \end{array}}\to \underbrace{S^d}_{\text{\footnotesize$\begin{array}{c}\text{degree}\\ $di+1$\end{array}$}}\;.$$
Similarly, one gets the homogeneous part of weight $d$ of $\overline{B}(\Lambda[i])$ by replacing symmetric powers  by exterior ones in the complex above.

Assume that $S[i]$, resp. $\Lambda[i]$, are graded commutative (i.e. assume that $i$ is even, resp. odd, or that $\k$ has characteristic $2$). Then
the canonical inclusions $\Lambda^d\hookrightarrow {\otimes^d}$, resp. $\Gamma^d\hookrightarrow {\otimes^d}$,  define morphisms of $\P_\k$-cdga-algebras: $$\Lambda[i+1]\hookrightarrow \overline{B}(S[i])\quad\text{ and }\quad \Gamma[i+1]\hookrightarrow \overline{B}(\Lambda[i])\;.$$
These morphisms are quasi-isomorphisms, see e.g. \cite[Chap 3, ex 3.2.5]{LodayValette}.
In particular, the complex $I_{\bullet,d}$ drawn above yields a coresolution of $\Lambda^d$ by symmetric powers.

Let us recall from {\cite[X Th. 11.2]{ML}} that bar constructions preserve quasi isomorphisms of algebras. Hence, the composite 
$$\Gamma[i+2]\hookrightarrow \overline{B}(\Lambda[i+1]) \hookrightarrow \overline{B}^2(S[i])$$
is a quasi isomorphism. So the homogeneous part of weight $d$ of $\overline{B}^2(S[i])$ yields a complex $J_{\bullet,d}$ of symmetric powers, whose homology equals $\Gamma^d$ in degree $d(i+2)$ (and zero in other degrees). The complex $J_{\bullet,d}$ has the form:
$$
\underbrace{{\otimes^d}}_{\text{\footnotesize$\begin{array}{c}\text{degree}\\ $d(i+2)$\end{array}$}}\to \bigoplus_{k=1}^{d-2} {\otimes^d} \to \dots \to {\begin{array}{c}S^{d-1}\otimes S^1\\\oplus S^1\otimes S^{d-1} \end{array}}\to \underbrace{S^d}_{\text{\footnotesize$\begin{array}{c}\text{degree}\\ $di+2$\end{array}$}}\;. 
$$
By lemma \ref{lm-sym-res}, we can use the complexes $I_{\bullet,d}$ and $J_{\bullet,d}$ to compute $\E(F,S^d)$ and $\E(F,\Lambda^d)$ when $F$ takes values in the category $\V_\k$ of finitely generated projective $\k$-modules.

\begin{proposition}\label{prop-cplx-ext}Let $\k$ be a commutative ring, let $C$ be a $\P_\k$-coalgebra whose homogeneous components $C_{d,i}$ take values in $\V_\k$, and let $i$ be a nonnegative integer. The $\P_\k$-graded algebras $\E(C, \Lambda)$ and $\E(C, \Gamma)$ are respectively isomorphic to the homology of the convolution $\P_\k$-dg-algebras $^\t\R_{2i+1}\H(C,\overline{B}(S[2i]))$ and $\R_{2i+2}\H(C,\overline{B}^2(S[2i]))$.
\end{proposition}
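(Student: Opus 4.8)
The plan is to deduce both isomorphisms from the two quasi-isomorphisms of $\P_\k$-cdga-algebras recalled in section \ref{subsubsec-bar-sym-ext}, namely $\Lambda[2i+1]\hookrightarrow\B(S[2i])$ and $\Gamma[2i+2]\hookrightarrow\B^2(S[2i])$ (which are available since $2i$ is even, so that $S[2i]$ and then $\B(S[2i])$ are graded commutative). The crucial observation is that, weight by weight, these bar constructions are $\hom$-acyclic coresolutions: the homogeneous part of weight $d$ of $\B(S[2i])$ is the complex $I_{\bullet,d}$ of section \ref{subsubsec-bar-sym-ext}, whose terms are tensor products of symmetric powers, each of which is --- by the exponential isomorphism for $S$ --- a direct summand of some $S^e_{\k^m}$, hence injective in the subcategory $\P'_\k$ of functors with values in $\V_\k$, hence $\hom_{\P_\k}(G,-)$-acyclic for every $G$ with finitely generated projective values (compare the proof of lemma \ref{lm-sym-res}); moreover the homology of $I_{\bullet,d}$ is $\Lambda^d$, placed in homological degree $d(2i+1)$. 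The same applies to $\B^2(S[2i])$, whose weight-$d$ part $J_{\bullet,d}$ --- whose terms, including the copies of $\otimes^d$, are again direct summands of functors $S^d_{\k^m}$ --- has homology $\Gamma^d$ in homological degree $d(2i+2)$.

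First I would establish the isomorphism on the level of graded strict polynomial functors. Fix $d\geq 0$ and $V\in\V_\k$; since $C_{0,d}$ has values in $\V_\k$, so does $(C_{0,d})^V$, and the degree-$n$ weight-$d$ part of $\H(C,\B(S[2i]))$ evaluated at $V$ is $\hom_{\P_\k}((C_{0,d})^V,I_{n,d})$, with differential induced by the bar differential. By lemma \ref{lm-sym-res}, its homology in degree $n$ is $\Ext^{d(2i+1)-n}_{\P_\k}((C_{0,d})^V,\Lambda^d)=\E^{d(2i+1)-n}(C_{0,d},\Lambda^d)(V)$. In view of the definition of the regrading functor $\R_\alpha$ and of definition \ref{def-conv}, this says exactly that $H(\R_{2i+1}\H(C,\B(S[2i])))$ and $\E(C,\Lambda)$ coincide as graded strict polynomial functors; note that the weight twist ${}^\t$ affects neither the underlying complex nor its homology viewed as a graded functor, so it plays no role at this stage. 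The divided power case is identical, using $J_{\bullet,d}$, the regrading $\R_{2i+2}$, and the degree $d(2i+2)$.

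Next I would upgrade this to an isomorphism of $\P_\k$-graded algebras. Since ${}^\t$ and $\R_\alpha$ are functors on $\{\P_\k\text{-dg-alg}\}$ which commute with the homology functor $H$ and with $\H(C,-)$ --- by lemma \ref{lm-prop-facile} for $\R_\alpha$, and by an immediate sign check for ${}^\t$, which multiplies a product and its convolution by the same weight sign $(-1)^{de}$ --- one has ${}^\t\R_{2i+1}\H(C,\B(S[2i]))=\H(C,{}^\t\R_{2i+1}\B(S[2i]))$, and likewise $\R_{2i+2}\H(C,\B^2(S[2i]))=\H(C,\R_{2i+2}\B^2(S[2i]))$. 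Applying ${}^\t\R_{2i+1}$ (resp.\ $\R_{2i+2}$) to the algebra quasi-isomorphism $\Lambda[2i+1]\hookrightarrow\B(S[2i])$ (resp.\ $\Gamma[2i+2]\hookrightarrow\B^2(S[2i])$), lemma \ref{lm-exe} identifies the source as $\Lambda$ (resp.\ $\Gamma$) \emph{on the nose}: this is exactly where the weight twist is needed, since for the odd shift $2i+1$ lemma \ref{lm-exe} gives $\R_{2i+1}\Lambda[2i+1]={}^\t\Lambda$ (so one must apply ${}^\t$ once more), whereas for the even shift $2i+2$ it gives $\R_{2i+2}\Gamma[2i+2]=\Gamma$ directly. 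We thus obtain quasi-isomorphisms of $\P_\k$-dg-algebras $\Lambda\hookrightarrow{}^\t\R_{2i+1}\B(S[2i])$ and $\Gamma\hookrightarrow\R_{2i+2}\B^2(S[2i])$ whose targets are, weight by weight, $\hom$-acyclic coresolutions of $\Lambda^d$, resp.\ $\Gamma^d$, placed in homological degree $0$.

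Finally I would check that the convolution product of $\H(C,{}^\t\R_{2i+1}\B(S[2i]))$ induces on homology precisely the convolution product of $\E(C,\Lambda)$ from definition \ref{def-conv}, and similarly for $\Gamma$. The point is that the two inclusions just constructed are morphisms of $\P_\k$-cdga-algebras, so the (regraded, twisted) shuffle product of the bar construction restricts, in weights $(d,e)\mapsto d+e$, to a chain map between injective coresolutions lifting the multiplication $\Lambda^d\otimes\Lambda^e\to\Lambda^{d+e}$; composing it with the external product on parameterized $\Ext$-groups of proposition \ref{prop-strpolstruct} and with postcomposition by the comultiplication $\Delta$ of $C$ reproduces exactly the composite defining the convolution product in definition \ref{def-conv}, because the identification ${}^\t\R_{2i+1}\Lambda[2i+1]=\Lambda$ guarantees that no spurious sign is introduced; and ${}^\t$, $\R_\alpha$, being functors of $\P_\k$-dg-algebras, transport this identification along coherently, Koszul signs included. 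I expect the main obstacle to be precisely this last multiplicative comparison --- in particular the standard but somewhat technical balancing of the external $\Ext$-product of proposition \ref{prop-strpolstruct} (defined through projective resolutions of the source) against the chain-level product coming from an injective coresolution of the target --- together with the attendant sign bookkeeping through $\R_\alpha$ and ${}^\t$.
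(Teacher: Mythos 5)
Your proposal is correct and follows essentially the same route as the paper: apply ${}^\t\R_{2i+1}$ (resp.\ $\R_{2i+2}$) to the quasi-isomorphism $\Lambda[2i+1]\hookrightarrow\B(S[2i])$ (resp.\ $\Gamma[2i+2]\hookrightarrow\B^2(S[2i])$) to obtain a weightwise $\hom$-acyclic coresolution of $\Lambda$ (resp.\ $\Gamma$) placed in degree $0$, then interchange ${}^\t\R_\alpha$ with $\H(C,-)$ via lemma~\ref{lm-prop-facile}. The ``main obstacle'' you anticipate in the final multiplicative comparison is in fact a non-issue and is dispatched in one line in the paper: the convolution product on $\E(C,\Lambda)$ is defined at the cochain level via \emph{any} chain lifting $\widetilde m\colon K_d\otimes K_e\to K_{d+e}$ of $m\colon\Lambda^d\otimes\Lambda^e\to\Lambda^{d+e}$, so choosing $\widetilde m$ to be the shuffle product of ${}^\t\R_{2i+1}\B(S[2i])$ makes the identification with the convolution product of $\H(C,{}^\t\R_{2i+1}\B(S[2i]))$ tautological rather than a balancing act.
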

\begin{proof}
We prove the case of $\E(C, \Lambda)$, the other case is similar. Let $K_d$ be a coresolution of $\Lambda^d$ by symmetric powers. Then $\E(C_{0,d},\Lambda^d)$ is the homology of the complex $\H(C_{0,d},K_d)$, and the convolution product is given on the cochain level by the composite:
$$\H(C_{0,d},K_d)\otimes \H(C_{0,e},K_e)\xrightarrow[]{\otimes}\H(C_{0,d}\otimes C_{0,e},K_d\otimes K_e)\xrightarrow[]{\H(\Delta_{d,e},\widetilde{m})}\H(C_{0,d+e},K_{d+e})\;, $$
where $\Delta_{d,e}$ is the comultiplication $C_{0,d+e}\to C_{0,d}\otimes C_{0,e}$ and $\widetilde{m}:K_d\otimes K_e\to K_{d+e}$ is a lifting of the multiplication $m:\Lambda^d\otimes\Lambda^e \to \Lambda^{d+e}$.

Consider the $\P_\k$-dg-algebra $^\t\R_{2i+1}(\B S[2i])$. It is quasi isomorphic to $^\t\R_{2i+1}(\Lambda[2i+1])=\Lambda$. So we may take for $K_d$ the homogeneous part of weight $d$ of this $\P_\k$-dg-algebra (this is $I_{\bullet,d}$, shifted), and for $\widetilde{m}$ the multiplication of $^\t\R_{2i+1}(\B S[2i])$.
With this choice, the composite above is the convolution product of $\H(C,\,^\t\R_{2i+1}(\B S[2i]))$. Thus, $\E(C,\Lambda)$ equals the homology of $\H(C,\,^\t\R_{2i+1}(\B S[2i]))$. By lemma \ref{lm-prop-facile}, the algebras 
$^\t\R_{2i+1}\H(C,\B S[2i])$ and $\H(C,^\t\R_{2i+1}(\B S[2i]))$ are equal, whence the result.
\end{proof}

\subsection{The interchange property}

This subsection is the core of our computation. Recall from section \ref{subsec-convol} that if $C$ is a coaugmented commutative $\P_\k$-coalgebra, we have a convolution algebra functor:
$$\H(C,-):\{\P_\k\text{-cdga-alg}\}\to \{\P_\k\text{-cdga-alg}\}\;.$$
The following key result asserts that when $C$ is exponential, the functor $\H(C,-)$ commutes with the bar construction functor $\B:\{\P_\k\text{-cdga-alg}\}\to \{\P_\k\text{-cdga-alg}\}$.

\begin{proposition}[Interchange property]\label{prop-key}
Let $\k$ be a commutative ring, let $E=\{E_{0,d}\}_{d\ge 0}$ be a commutative exponential functor concentrated in degree zero, and let $A=\{A_{i,d}\}_{i,d\ge 0}$ be a $\P_\k$-cdga-algebra. Assume that  $\H(E,A)$ takes $\k$-projective values. Then there is an isomorphism of $\P_\k$-cdga-algebras, natural with respect to $A$:
$$\H(E,\overline{B}A)\simeq \overline{B}\H(E,A) \;.$$
\end{proposition}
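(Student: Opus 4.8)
The plan is to observe that, under the running hypotheses, the parameterized hom functor $\H(E,-)$ behaves like a symmetric monoidal functor on the category of $\V_\k$-valued strict polynomial functors, and that the bar construction $\overline{B}$ is built functorially out of the symmetric monoidal structure (tensor powers, unit, multiplication, internal differential, deconcatenation, shuffle). Accordingly the proof splits into two parts: first produce the underlying isomorphism of graded strict polynomial functors out of lemma \ref{lm-facile}, then check that it is compatible with each piece of the bar structure.

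First I would build the underlying isomorphism. Recall that $\overline{B}A=\bigoplus_{n\ge 0}(A')^{\otimes n}$ as a functor, and likewise $\overline{B}\H(E,A)=\bigoplus_{n\ge 0}(\H(E,A)')^{\otimes n}$. Since $E$ is exponential and concentrated in degree zero, $E_{0,0}=\k$, so $\H(E,\k)$ is concentrated in bidegree $(0,0)$ where it equals $\k$; combined with the splitting $A\simeq\k\oplus A'$ furnished by the (co)augmentations, this gives a canonical identification $\H(E,A)'\simeq\H(E,A')$. Next, applying lemma \ref{lm-facile} repeatedly (with $E$ in degree zero, so that only the weight grading intervenes, and using the comultiplication of $E$), one gets isomorphisms of strict polynomial functors $\H\big(E,(A')^{\otimes n}\big)\simeq\H(E,A')^{\otimes n}$, natural in $A$; the hypotheses of lemma \ref{lm-facile} are met by an induction on $n$, using that tensor products of $\k$-projective modules are $\k$-projective. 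Because, in a fixed homological degree $k$, an element $[a_1|\dots|a_n]$ has bar degree $n+\sum|a_i|\ge n$, only finitely many $n$ contribute to $\overline{B}A$ in each bidegree $(k,d)$; hence $\H(E,-)$, which commutes with finite direct sums, carries $\overline{B}A$ to $\bigoplus_n\H(E,(A')^{\otimes n})$, and we obtain an isomorphism of graded strict polynomial functors
$$\Theta:\H(E,\overline{B}A)\xrightarrow[]{\ \sim\ }\overline{B}\H(E,A)\,,$$
natural in $A$, preserving homological degrees and weights (weights add up on both sides).

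It remains to check that $\Theta$ is an isomorphism of $\P_\k$-cdga-algebras. Compatibility with the differential is the only point involving the multiplication of $A$: the bar differential of $\overline{B}A$ is a signed sum of maps, each of which is the identity on all but one tensor slot, where it is either $m_A\colon A'\otimes A'\to A'$ or $\partial_A$. By naturality of $\Theta$ in $A$ and of the isomorphisms of lemma \ref{lm-facile}, it suffices to know that conjugating $\H(E,m_A)$ by those isomorphisms yields the convolution product of $\H(E,A')$, and that $\H(E,\partial_A)$ is the differential of the convolution algebra — both of which hold by the very definition of $\H(C,A)$ in section \ref{subsec-convol}. The Koszul signs in the two bar differentials agree because they depend only on the bar degrees, which $\Theta$ preserves. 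Compatibility with the counit, the coaugmentation and the deconcatenation coproduct is purely combinatorial (these maps involve neither $m_A$ nor $\partial_A$) and holds by construction of $\Theta$ from the level-$n$ isomorphisms, since $(A')^{\otimes n}\simeq(A')^{\otimes i}\otimes(A')^{\otimes(n-i)}$ compatibly. Compatibility with the shuffle product (and with the unit $[\,]$) follows because the shuffle product permutes bar factors with Koszul signs determined by degrees, and the isomorphisms of lemma \ref{lm-facile} are symmetric — here one uses that $E$ is a cocommutative coalgebra, as in lemma \ref{lm-prop-param-hom}. Finally, both sides are genuinely $\P_\k$-cdga-algebras: $\overline{B}$ of the commutative $\P_\k$-cdga-algebra $A$ is one via the shuffle product, and $\H(E,\overline{B}A)$ is graded commutative because $E$ is a commutative coalgebra and $\overline{B}A$ is graded commutative (the graded commutativity lemma of section \ref{subsec-convol}); hence $\Theta$, being multiplicative, is the asserted isomorphism of $\P_\k$-cdga-algebras.

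I do not expect a genuine conceptual obstacle: the statement is, in essence, that a symmetric monoidal functor commutes with the bar construction. The real work — and the main source of potential error — is the sign and coherence bookkeeping in the differential step, i.e. verifying that the "multiply two adjacent bars" part of $\partial_{\overline{B}A}$ is carried by $\Theta$ exactly onto the convolution-product part of $\partial_{\overline{B}\H(E,A)}$, including all Koszul signs and the associativity and cocommutativity coherences underlying the iterated form of lemma \ref{lm-facile}.
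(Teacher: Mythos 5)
Your proof is correct and follows essentially the same route as the paper's: construct the isomorphism $\theta$ level by level via lemma \ref{lm-facile}, then verify compatibility with the bar differential by reducing it to the multiplication of $A$ (whose image under $\H(E,-)$ is, by definition, the convolution product) and with the shuffle product by invoking the commutativity of $E$ to make permutations of tensor factors commute with $\theta_n$. You are somewhat more explicit than the paper about the $\partial_A$ part of the differential and about the bidegreewise finiteness of the direct sum $\bigoplus_n(A')^{\otimes n}$, but these are minor elaborations of the same argument, not a different approach.
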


\begin{remark}
We encourage the reader to try to prove a statement similar to proposition \ref{prop-key} in the framework of Schur algebras to understand the subtle difference between strict polynomial functors and modules over Schur algebras.
\end{remark}

\begin{remark}
If $\k$ is a Dedekind domain (e.g $\k$ is a field or $\mathbb{Z}$), and $A$ has $\k$-projective values, the technical assumption that $\H(E,A)$ has $\k$-projective values is always satisfied.
\end{remark}

\begin{proof}[Proof of proposition \ref{prop-key}]
Let us write for short $[X]$ instead of $\H(E,X)$. Lemma \ref{lm-facile} yields an isomorphism $\theta_n: [A]^{\otimes n}\to [A^{\otimes n}]$. Taking the direct sum over all $n\ge 0$, we get an isomorphism of graded strict polynomial functors: $\theta:\B[A]\xrightarrow[]{\simeq}[\B A]$, natural with respect to $A$. We have to check that $\theta$ is an isomorphism of $\P_\k$-cdga-algebras. It is obvious that $\theta$ preserves augmentations.

By definition, the product $\star$ in the convolution algebra $[A]$ is the composite $[m]\circ \theta_2$, where $m$ is the multiplication of $A$. Thus we have a commutative diagram for all $k\le n$:
$$\xymatrix{
[A]^{\otimes n}\ar[r]^-{\simeq}_-{\theta_n}\ar[d]_-{[A]^{\otimes k}\otimes\star\otimes [A]^{\otimes n-k-2}}&[A^{\otimes n}]\ar[d]^-{[A^{\otimes k}\otimes m\otimes A^{\otimes n-k-2}]}\\
[A]^{\otimes k}\otimes[A]\otimes [A]^{\otimes n-k-2}\ar[r]^-{\simeq}_-{\theta_{n-1}}&[A^{\otimes n-1}]\;.
}$$
By definition of the differentials of $[\B A]$ and $\B [A]$, this commutative diagram implies that $\theta$ commutes with the differentials. 

Let $\sigma\in\Si_n$. Since $E$ is commutative, we have a commutative diagram:
$$\xymatrix{
[A]^{\otimes n}\ar[r]^-{\simeq}_-{\theta}\ar[d]^-{\sigma}&[A^{\otimes n}]\ar[d]^-{[\sigma]}\\
[A]^{\otimes n}\ar[r]^-{\simeq}_-{\theta}&[A^{\otimes n}]
}$$
where $\sigma:[A]^{\otimes n}\to [A]^{\otimes n}$ and $[\sigma]:[A^{\otimes n}]\to [A^{\otimes n}]$ are the maps induced by permuting the factors of the tensor products. By definition of the products of $\B [A]$ and $[\B A]$, this commutative diagram implies that $\theta$ is an isomorphism of algebras. Whence the result.
\end{proof}

\begin{theoreme}\label{thm-bar}Let $\k$ be a commutative ring, and let $i$ an integer.
The $\P_\k$-graded algebras $\E(S,\Lambda)$ and $\E(S,\Gamma)$ are respectively computed by the homology of the $\P_\k$-dg-algebras
$^\t\R_{2i+1}\overline{B}(\Gamma[2i])$ and $\R_{2i+2}\overline{B}^2(\Gamma[2i])$.

\end{theoreme} 

\begin{proof} We first observe that $\H(S,S[2i])=\Gamma[2i]$, hence has $\k$-projective values. Thus, proposition \ref{prop-key} and lemma \ref{lm-element} yield an isomorphism:
$$\H(S,\B (S[2i]))\simeq \B\H(S,S[2i])\simeq \B(\Gamma[2i])\;.$$
But $\Gamma[2i]$, hence also $\B(\Gamma[2i])$ has $\k$-projective values. Thus, by the isomorphism above, $\H(S,\B(S[2i]))$ has $\k$-projective values. Applying proposition \ref{prop-key} again, we get an isomorphism:
$$\H(S,\B^2 (S[2i]))\simeq \B\H(S,\B(S[2i]))\simeq \B^2(\Gamma[2i])\;. $$ 
Now the result follows from proposition \ref{prop-cplx-ext}.
\end{proof}

\begin{remark}\label{rk-Ext-Lambda-Lambda}
In proposition \ref{prop-key}, it is essential that $E$ is commutative. If this is not the case, then $\H(E,A)$ is not graded commutative, so $\B \H(E,A)$ does not bear a multiplication. However, it is easy to check (same proof) that if we drop the commutativity of $E$, we still have an isomorphism of complexes of strict polynomial functors:
$$\H(E,\B A)\simeq \B \H(E,A)\;.$$
Applying this to $E=A=\Lambda$, we get an isomorphism of complexes $\H(\Lambda,\B S)\simeq \B\H(\Lambda,S)$. But $\H(\Lambda,S)\simeq \Lambda$, so the homology of $\H(\Lambda,\B S)$ equals $\Gamma[1]$ (as a graded strict polynomial functor). Since the homology of $\R_1(\H(\Lambda,\B S))$ computes $\E(\Lambda,\Lambda)$ (same proof as proposition \ref{prop-cplx-ext}, without taking the algebra structure into account), we finally obtain (cf. lemma \ref{lm-element}):
$$\E(\Lambda,\Lambda)=\H(\Lambda,\Lambda)\simeq\Gamma\;.$$
This provides a proof that there are no extension groups between $\Lambda$ and $\Lambda$, as asserted in the introduction.
\end{remark}

\part{Homology of Eilenberg-Mac Lane spaces}\label{part-3}

In theorem \ref{thm-bar} in part \ref{part-2}, we proved that $\E(S,\Lambda)$ and $\E(S,\Gamma)$ are equal (up to regrading) to the homology of iterated bar constructions of $\Gamma[2]$. In part \ref{part-3}, we tackle the following problem.

\begin{probleme}
\label{pb-HB}
Compute the homology of the differential graded $\P_\k$-algebras $\B^n(\Gamma[2])$. 
\end{probleme}

\section{Bar constructions and Eilenberg-Mac Lane spaces}
Akin observed \cite{A} that over a field $\k$, the homology of $\B\Gamma(\k[2])$ can be interpreted as the dual of the rational cohomology of the additive group, computed in \cite{CPSVdK}. 
However, this approach solves problem 
only for $n=1$ and when the ground ring $\k$ is a field. To solve problem \ref{pb-HB} in general, we rather interpret the homology of $\B^n(\Gamma[2])$ as the singular homology of some Eilenberg Mac Lane (EML) spaces. Indeed, for an abelian group $\pi$, the singular homology with coefficients in $\k$ of the EML space $K(\pi,n)$  is isomorphic \cite{EML} to the homology of $\B^n(\k\pi)$, where $\k\pi$ denotes the group algebra of $\pi$. 
When $\pi$ is free abelian, we can choose a $\k$-linear section of the canonical map $\k\pi\twoheadrightarrow \pi\otimes_\Z\k$. By the universal property of the symmetric algebra, this section induces a morphism of $\k$-algebras $\psi:S(\pi\otimes_\Z \k)\to \k\pi$.

\begin{proposition}\label{prop-S-k}
Let $\k$ be a commutative ring, let $\pi$ be a free abelian group, and let $\psi:S(\pi\otimes_\Z \k)\to \k\pi$ be a morphism of $\k$-algebras build from a section of the canonical map $\k\pi\twoheadrightarrow \pi\otimes_\Z\k$.
Then for all positive integer $n$, the map 
$$\B^n(\psi):\B^n(S(\pi\otimes_\Z \k))\to \B^n(\k\pi)$$ 
is a quasi-isomorphism. It is not natural with respect to $\pi$, however the map $H(\B^n(\psi))$ induced on homology is natural with respect to $\pi$.
\end{proposition}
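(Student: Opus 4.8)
The plan is to prove the quasi-isomorphism statement by induction on $n$, using an explicit computation for $n=1$, and then to deduce the naturality of the induced map on homology from the fact that, once one passes to homology, the chosen section leaves no trace.

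For $n=1$ one may take the section $s$ to land in the augmentation ideal of $\k\pi$ (which is possible since $\pi\otimes_{\Z}\k$ is $\k$-free), so that $\psi$ is a morphism of augmented $\k$-algebras and $\overline{B}(\psi)$ a morphism of commutative dg-coalgebras. By Koszul duality, $H_{*}(\overline{B}(S(\pi\otimes_{\Z}\k)))=\mathrm{Tor}^{S(\pi\otimes_{\Z}\k)}_{*}(\k,\k)$ is the exterior $\k$-algebra on $\pi\otimes_{\Z}\k$ placed in homological degree $1$; and $H_{*}(\overline{B}(\k\pi))=\mathrm{Tor}^{\k\pi}_{*}(\k,\k)=H_{*}(\pi;\k)$ is, because $\pi$ is free abelian, again the exterior $\k$-algebra on $\pi\otimes_{\Z}\k$ in homological degree $1$ (for $\pi=\Z$ this is read off the length-one resolution $0\to\k[t^{\pm1}]\xrightarrow{t-1}\k[t^{\pm1}]\to\k\to 0$, and the general case follows by a K\"unneth argument together with the shuffle quasi-isomorphism $\bigotimes_{i}\overline{B}(A_{i})\xrightarrow{\simeq}\overline{B}(\bigotimes_{i}A_{i})$ and a filtered colimit). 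Since $\overline{B}(\psi)$ is a morphism of dg-algebras, $H(\overline{B}(\psi))$ is an algebra map between two exterior algebras generated in homological degree $1$, hence is determined by its degree-$1$ component $I_{S(\pi\otimes\k)}/I_{S(\pi\otimes\k)}^{2}\to I_{\k\pi}/I_{\k\pi}^{2}$, which sends the class of $v$ to the class of $s(v)$ and therefore is an isomorphism by the defining property of a section. Thus $\overline{B}(\psi)$ is a quasi-isomorphism. Since $\pi$ free abelian makes all the relevant augmentation ideals, and hence all the iterated bar constructions, degreewise $\k$-free, the fact recalled in Section~\ref{subsubsec-bar-sym-ext} that $\overline{B}$ preserves quasi-isomorphisms of flat commutative dg-algebras then gives, by induction, that $\overline{B}^{n}(\psi)=\overline{B}(\overline{B}^{n-1}(\psi))$ is a quasi-isomorphism for all $n\ge 1$.

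For naturality, write $V_{\pi}=\pi\otimes_{\Z}\k$. The key observation is that $\psi_{s}$ is a morphism of $I$-adically filtered algebras: it carries $I^{k}_{S(V_{\pi})}=\bigoplus_{d\ge k}S^{d}(V_{\pi})$ into $I^{k}_{\k\pi}$, and its associated graded is the canonical isomorphism $S(V_{\pi})\xrightarrow{\simeq}\mathrm{gr}_{I}(\k\pi)$, which is an isomorphism precisely because $\pi$ is free abelian, and is manifestly independent of $s$. Filtering $\overline{B}^{n}(\k\pi)$ accordingly produces a spectral sequence, natural in $\pi$, with $E^{1}$-page $H(\overline{B}^{n}(S(V_{\pi})))$ abutting to $H(\overline{B}^{n}(\k\pi))$, and $\overline{B}^{n}(\psi_{s})$ induces a morphism of spectral sequences which is the identity on $E^{1}$. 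Since $\overline{B}^{n}(\psi_{s})$ is in addition a map of graded bialgebras (shuffle product, deconcatenation coproduct) restricting to the natural, section-free identification $V_{\pi}\to V_{\pi}$ on the degree-$n$ primitives, one concludes that $H(\overline{B}^{n}(\psi_{s}))$ does not depend on $s$; and applying the same rigidity to the two bialgebra maps $f_{*}\circ H(\overline{B}^{n}(\psi_{s}))$ and $H(\overline{B}^{n}(\psi_{s'}))\circ H(\overline{B}^{n}(S(f\otimes\k)))$ associated to a morphism $f\colon\pi\to\pi'$ yields naturality in $\pi$.

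I expect the main obstacle to be the rigidity invoked in the last step: being the identity on the $E^{1}$-page only pins a degree-preserving bialgebra automorphism of $H(\overline{B}^{n}(S(V_{\pi})))$ down to a unipotent ambiguity with respect to the weight filtration, and eliminating this ambiguity requires controlling the bialgebra $H(\overline{B}^{n}(S(V)))$ well enough — concretely, that it is freely generated, as a graded bialgebra with divided powers, by its degree-$n$ primitives. This can be extracted from a direct analysis of the iterated bar construction of a polynomial algebra, and is in any case the very structure underlying Cartan's computation exploited in the sections to come; so the honest shape of the argument is that the quasi-isomorphism part is routine while the naturality rests on this structural input.
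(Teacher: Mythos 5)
Your quasi-isomorphism argument is fine and runs along a slightly different route from the paper's: you compute $H_*(\overline{B}(S(\pi\otimes_\Z\k)))$ and $H_*(\overline{B}(\k\pi))$ separately (as the exterior algebra $\Lambda(\pi\otimes_\Z\k)$ on degree-$1$ generators, via Koszul duality on one side and free abelian group homology on the other) and observe that $H(\overline{B}(\psi))$ is an algebra map between exterior algebras restricting to the identity on degree $1$, while the paper instead cites directly that the morphism $\Lambda(s)\colon\Lambda(\pi\otimes_\Z\k[1])\to\overline{B}(\k\pi)$ induced by the section is a quasi-isomorphism (Brown V.6, Th.~6.4(ii)) and factors it through $\overline{B}(\psi)$. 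Both are sound; yours requires knowing both target homologies explicitly, the paper's only one.

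The naturality part is where the real difficulty lies, and you have correctly diagnosed that your argument is incomplete: showing that $H(\overline{B}^n(\psi_s))$ induces the identity on the $E^1$-page of the $I$-adic spectral sequence only controls the map up to a unipotent ambiguity in the weight filtration, and closing that gap by a bialgebra rigidity argument requires the very structural description of $H(\overline{B}^n(S(V)))$ (free as a bialgebra with divided powers on degree-$n$ primitives) that the proposition is meant to give access to, making the dependency uncomfortable. Beyond this confessed gap, there is also a hidden convergence issue: the $I$-adic filtration on $\k\pi$ is decreasing and unbounded, so the spectral sequence of the filtered complex $\overline{B}^n(\k\pi)$ you invoke needs a separate completeness/exhaustiveness argument before you can trust that $E^1\Rightarrow H(\overline{B}^n(\k\pi))$. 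The paper's argument avoids all of this with a much lighter device: it inserts the third algebra $S(\k\pi)$ above the row $S(\pi\otimes_\Z\k)\xrightarrow{\psi}\k\pi$ via the two maps $S(q)\colon S(\k\pi)\to S(\pi\otimes_\Z\k)$ and $\phi\colon S(\k\pi)\to\k\pi$ (the universal extension of $\mathrm{id}_{\k\pi}$), both of which are manifestly natural in $\pi$. After $\overline{B}^n$ and homology, $H(\overline{B}^n(S(q)))$ is a natural surjection (split by $S(s)$), $H(\overline{B}^n(\phi))$ is a natural surjection, and the triangle forces $H(\overline{B}^n(\psi))$ to be the unique map making it commute; naturality then drops out by a diagram chase. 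You should replace your spectral-sequence/rigidity route by this factorization argument, or supply the missing bialgebra-with-divided-powers structure theorem and the convergence check.
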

\begin{proof}
Let us first prove that $\B(\psi)$ is a quasi-isomorphism. Let $s$ denote the section of the canonical map $q:\k\pi\twoheadrightarrow \pi\otimes_\Z\k$. Since 
all the elements of $\overline{B}(\k\pi)_1=\k\pi$ are cycles and the differential graded algebra $\B(\k\pi)$ is graded commutative, products yield a morphism of differential graded $\k$-algebras (take the trivial differential on the left hand side)
$$\Lambda(s):\Lambda(\pi\otimes_\Z\k[1])\to \B(\k\pi)\;. $$ 
It is well known that $\Lambda(s)$ is a quasi-isomorphism, see e.g. \cite[V.6 Th 6.4(ii)]{Brown}. Now $\B(\psi)$ is a morphism of algebras, so $\Lambda(s)$ factors as: 
$$\Lambda(\pi\otimes_\Z\k[1])\hookrightarrow \B(S(\pi\otimes_\Z\k))\xrightarrow[]{\B(\psi)} \B(\k\pi) $$
As recalled in section \ref{subsubsec-bar-sym-ext} the first map is a quasi-isomorphism, and since $\Lambda(s)$ is a quasi-isomorphism, so is $\B(\psi)$. 
Bar constructions preserve quasi-isomorphisms, so for all $n\ge 1$, $\B^n(\psi)$ is a quasi-isomorphism.

It remains to show that the map induced on homology by $\B^n(\psi)$ is natural with respect to $\pi$. We have a commutative diagram of differential graded $\k$-algebras:
$$\xymatrix{
&\B^n(S(\k\pi))\ar[ld]_-{\B^n(S(q))}\ar[rd]^-{\B^n(\phi)} &\\
\B^n(S(\pi\otimes_\Z\k))\ar[rr]^-{\B^n(\psi)}&& \B^n(\k\pi)
}\;,$$
where $\phi:S(\k\pi)\to \k\pi$ is the morphism of $\k$-algebras induced by the universal property of the symmetric algebra and the identity map of $\k\pi$. The map $\B^n(S(q))$ is surjective in homology, indeed $\B^n(S(s))$ provides a section of $\B^n(S(q))$. Since $\B^n(\psi)$ is a quasi-isomorphism, $\B^n(\phi)$ is also surjective in homology. So for all $\pi$ we have a commutative diagram:
$$\xymatrix{
&H(\B^n(S(\k\pi)))\ar@{->>}[ld]_-{H(\B^n(S(q)))}\ar@{->>}[rd]^-{H(\B^n(\phi))} &\\
H(\B^n(S(\pi\otimes_\Z\k)))\ar[rr]^-{H(\B^n(\psi))}&& H(\B^n(\k\pi))\;.
}$$
By their definition, the two maps with source $H(\B^n(S(\k\pi)))$ are natural with respect to $\pi$. Since they are surjective, an easy diagram chase proves that $H(\B^n(\psi))$ is natural with respect to $\pi$.
\end{proof}

\begin{remark}
Let $\pi$ be an abelian group. The fact that the homology of the bar constructions $\B^n S(\pi\otimes_\Z\k)$ and $\B^n \k\pi$ are isomorphic is well known \cite[4.16]{DP}. The fact that we can build an isomorphism natural with respect to $\pi$ is less known (and it is actually false if $\pi$ is not a free abelian group).
\end{remark}

Let us denote for short by $H(n,\k)$ the functor which sends a free abelian group $\pi$ to the graded algebra $H_*^{\mathrm{sing}}(K(\pi,n),\k)=H_*(\B^n(\k\pi))$. Proposition \ref{prop-S-k} has the following consequence.

\begin{corollary}\label{cor-fonc-alg} Let $n$ be a nonnegative integer.
If $\k=\Z$, there is an isomorphism of $\F_\Z$-graded algebras $$H(\B^n \Gamma[2])\simeq H(n+2,\Z)\;.$$ 
If $p$ is a prime and $\k=\Fp$, $H(n,\Fp)$ factors into a $\F_\Fp$-graded algebra $\overline{H}(n,\Fp)$ and we have an isomorphism of $\F_\Fp$-graded algebras
$$H(\B^n \Gamma[2])\simeq \overline{H}(n+2,\Fp)\;.$$
\end{corollary}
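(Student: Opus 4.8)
The plan is to reduce the statement to Proposition~\ref{prop-S-k} by trading the divided power algebra $\Gamma[2]$ for an iterated bar construction of the symmetric algebra. First I would invoke the quasi-isomorphisms recalled in section~\ref{subsubsec-bar-sym-ext}: taking $i=0$ there, the canonical inclusions give a quasi-isomorphism of $\P_\k$-cdga-algebras $\Gamma[2]\hookrightarrow\B^2(S[0])$, where $S[0]=S$. Since bar constructions preserve quasi-isomorphisms of algebras, $\B^n(\Gamma[2])\to\B^{n+2}(S)$ is again a quasi-isomorphism of $\P_\k$-cdga-algebras, whence an isomorphism of $\P_\k$-graded algebras $H(\B^n\Gamma[2])\simeq H(\B^{n+2}S)$, hence of $\F_\k$-graded algebras after applying the forgetful functor $\U$. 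As $\U$ and evaluation $\ev_V$ are exact and commute with bar constructions, the $\F_\k$-graded algebra $\U H(\B^{n+2}S)$ sends $V\in\V_\k$ to $H(\B^{n+2}(S_\k(V)))$, the homology of the $(n+2)$-fold iterated bar construction of the symmetric algebra of the $\k$-module $V$.

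Next I would feed this into Proposition~\ref{prop-S-k}. For $\k=\Z$ the objects of $\V_\Z$ are precisely the finitely generated free abelian groups $\pi$, and $S_\Z(\pi)=S(\pi\otimes_\Z\Z)$; Proposition~\ref{prop-S-k}, applied with the positive integer $n+2$, provides a quasi-isomorphism of differential graded $\Z$-algebras $\B^{n+2}(S(\pi\otimes_\Z\Z))\to\B^{n+2}(\Z\pi)$ inducing a map on homology natural in $\pi$. Concatenating the resulting isomorphisms
\begin{align*}
H(\B^n\Gamma[2])(\pi)&\;\simeq\;H(\B^{n+2}(S_\Z(\pi)))\;\simeq\;H(\B^{n+2}(\Z\pi))\\
&\;=\;H^{\mathrm{sing}}_*(K(\pi,n+2),\Z)\;=\;H(n+2,\Z)(\pi)
\end{align*}
gives the isomorphism of the corollary; it is one of $\F_\Z$-graded algebras because every arrow above is a morphism of differential graded algebras and the last two equalities are the definition of $H(n+2,\Z)$, and its naturality in $\pi$ is precisely the naturality asserted in Proposition~\ref{prop-S-k}.

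For $\k=\Fp$ there is the extra claim that (for $n\ge 1$) $H(n,\Fp)$ factors through an $\F_{\Fp}$-graded algebra $\overline{H}(n,\Fp)$. I would define $\overline{H}(n,\Fp)$ directly on $\V_{\Fp}$ by $\overline{H}(n,\Fp)(W)=H(\B^n(S_{\Fp}(W)))$; Proposition~\ref{prop-S-k} then yields, naturally in the free abelian group $\pi$, isomorphisms $H(n,\Fp)(\pi)=H(\B^n(\Fp\pi))\simeq H(\B^n(S(\pi\otimes_\Z\Fp)))=\overline{H}(n,\Fp)(\pi\otimes_\Z\Fp)$, which is exactly the asserted factorization. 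Running the argument of the preceding paragraph over $\k=\Fp$ (legitimate since $n+2\ge 2$) then identifies $H(\B^n\Gamma[2])$, as an $\F_{\Fp}$-graded algebra, with the functor $V\mapsto H(\B^{n+2}(S_{\Fp}(V)))=\overline{H}(n+2,\Fp)(V)$, as desired.

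I expect no serious obstacle, since the substantive input is entirely contained in Proposition~\ref{prop-S-k}. The points deserving a little care are that $\ev_V$ is exact on $\P_\k$ (true because exactness there is objectwise, which is what lets the quasi-isomorphism $\Gamma[2]\hookrightarrow\B^2(S)$ survive evaluation), that bar constructions over $\Z$ and over $\Fp$ preserve the relevant quasi-isomorphisms (the algebras in play have $\k$-flat components), and---the one genuinely tedious point---the bookkeeping that each isomorphism produced above is multiplicative and not merely an isomorphism of the underlying graded $\k$-modules, which one settles by checking that every intermediate map is a morphism of (differential graded) algebras.
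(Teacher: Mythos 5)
Your proof is correct and follows what the paper clearly intends: combine the quasi-isomorphism $\Gamma[2]\hookrightarrow\B^2(S)$ of $\P_\k$-cdga-algebras from section \ref{subsubsec-bar-sym-ext} (so $H(\B^n\Gamma[2])\simeq H(\B^{n+2}S)$), with Proposition \ref{prop-S-k} to identify the evaluation at a free $\Z$-module $\pi$ with $H(n+2,\k)(\pi)$, naturally and multiplicatively. The points you flag as needing care (objectwise exactness of $\ev_V$, bar constructions preserving quasi-isomorphisms of $\k$-flat dg-algebras, and multiplicativity of every map) are exactly the right ones, and your treatment of the $\Fp$ factorization by defining $\overline{H}(n,\Fp)(W)=H(\B^n(S_{\Fp}(W)))$ and invoking Proposition \ref{prop-S-k} for naturality in $\pi$ is what makes the corollary work.
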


Corollary \ref{cor-fonc-alg} shows that we can recover the homology of $\B^n(\Gamma[2])$, as a $\F_\k$-graded algebra, when the ground ring $\k$ is $\Z$ or $\Fp$, from the algebraic topology computations of \cite{Cartan}. But for our purposes, we need more. We need the $\P_\k$-graded algebra structure of the homology of $\B^n(\Gamma[2])$ (or at least the data of the weights) in order to be able to apply our regrading functors from section \ref{sec-regrading}. We also wish to compute the homology of $\B^n(\Gamma[2])$ over an arbitrary ground field $\k$ (not just a prime field). So in this part of the paper, we elaborate on Cartan's results to obtain a more satisfactory answer to problem \ref{pb-HB}. 
Our first main result is
theorem \ref{thm-BnGamma-F}, which solves problem \ref{pb-HB} when $\k$ is an arbitrary field. Our second main result is theorem \ref{thm-BGamma-Z}, which describes $H_*(\B^{n}(\Gamma(\Z^m\, [2])))$ as a graded algebra with weights for $\k=\Z$. We deduce from these theorems an explicit computation of 
$\E(S,\Lambda)$ and $\E(S,\Gamma)$. Section \ref{sec-dpa} is a preparatory work for our first main result. 

\section{Strict polynomial structures on divided power algebras}\label{sec-dpa}

Let $\k$ be a field of prime characteristic. If a functor $F\in\F_\k$ is graded, we can form the $\F_\k$-graded algebra:
$$U(F)=\Gamma(F_{\mathrm{even}})\otimes \Lambda(F_{\mathrm{odd}})$$ 
We denote this algebra by $U(F)$ because it satisfies a universal property, see theorem \ref{thm-univ-Car} below. Recall from section \ref{sec-hierarchy} the forgetful functor $$\U:\{\P_\k\text{-graded alg.}\}\to \{\F_\k\text{-graded alg.}\}\;.$$ 
In this section, we determine all the $\P_\k$-graded algebras $A$ such that $\U A= U(F)$, if $F$ is additive. We also do the same for $\Gamma(F)$ in characteristic $2$.

\subsection{Strict polynomial structures on $U(F)$}
We first need a few results about additive strict polynomial functors. We say that a functor $F\in\P_\k$ is additive if the underlying functor $\U F\in\F_\k$ is additive.
\begin{lemme}\label{lm-classif-add} Let $\k$ be a field of prime characteristic. 
\begin{itemize}
\item (Classification) If $F\in\P_\k$ is additive with finite dimensional values, then $F$ either equals zero or is a finite direct sum of Frobenius twists $I^{(r)}$ (with possibly different $r\ge 0$).
\item (Retracts) Let $F,G$ be additive functors with finite dimensional values, and let $f\in \hom_{\P_\k}(F,G)$. Then $(i)$ and $(ii)$ are equivalent.
\begin{itemize}
\item[(i)] There exists $V\in\V_\k$ such that the $\k$-linear map $f_V:F(V)\to G(V)$ is surjective
\item[(ii)] There exists $\iota\in \hom_{\P_\k}(G,F)$ such that $f\circ\iota=\Id_F$.
\end{itemize}
\end{itemize}
\end{lemme}
\begin{proof} To prove the classification, we can assume that $F$ is homogeneous of degree $d$. If $d=0$, then $F$ is constant and additive, hence $F=0$. So let us assume that $d\ge 1$. There are two cases.

{\bf Case 1: $d$ is not a power of $p$.} The map 
$$\begin{array}{ccc}
\End_\k(\k)&\to & \End_\k(F(\k))\\
f &\mapsto &F(f)  
\end{array}$$
is given by a homogeneous polynomial of degree $d$, which is additive. Since $d$ is not a power of $p$, the only such polynomial is the zero polynomial. Thus, $F$ sends the identity map of $\k$ to zero.  So the identity map of $F(\k)$ equals zero. Thus $F(\k)=0$. By additivity of $F$ this implies that $F=0$.

{\bf Case 2: $d=p^r$, for $r\ge 0$.} Assume that $F\ne 0$ and fix an integer $n\ge p^r$. Recall from \cite[Cor 3.13]{FS} that evaluation on $\k^n$, yields an equivalence of categories $\P_{p^r,\k}\simeq S(n,p^r)\mathrm{-mod}$ (where $S(n,p^r)$ is the Schur algebra). So it suffices to prove that $F(\k^n)$ is a direct sum of copies of $(\k^n)^{(r)}$. 

Let $\k_i$ denote the vector space $\k$ acted on by the torus $\mathbb{G}_m^{\times n}$ by $(\lambda_1,\dots,\lambda_n)\cdot x = \lambda_i\cdot x$. Since $F$ is homogeneous of degree $p^r$, $F(\k_i)$ is acted on by $\mathbb{G}_m^{\times n}$ by $(\lambda_1,\dots,\lambda_n)\cdot x = \lambda_i^{p^r}\cdot x$. Additivity of $F$ yields a $\mathbb{G}_m^{\times n}$-equivariant isomorphism 
$$F(\k^n)=F(\textstyle\bigoplus_{i\le n} \k_i)\simeq\textstyle\bigoplus_{i\le n} F(\k_i)\;.$$ 
As a consequence, all the weights of the $S(n,p^r)$-module $F(\k^n)$ are of the form $(\mu_1,\dots,\mu_n)$ with all $\mu_i=0$ but one which equals $p^r$. In particular, if 
$S_1(\k^n),\dots,S_N(\k^n)$ is the composition series of $F(\k^n)$, the $S_i(\k^n)$ are finite direct sums of simples with highest weight $(p^r,0,\dots,0)$, that is of $(\k^n)^{(r)}$.
We know (see e.g. 
\cite{FS}) that $\Ext^1((\k^n)^{(r)},(\k^n)^{(r)})=0$. Thus there cannot be nontrivial extensions between finite direct sums of  $(\k^n)^{(r)}$. This implies that the composition series of $F(\k^n)$ has length $N=1$. Thus $F(\k^n)$ is a finite direct sum of copies of $(\k^n)^{(r)}$.

Finally, let us prove the characterization of retracts. We can assume that $F,G$ are homogeneous of degree $p^r$, $r\ge 1$. The identity map is a basis of $\hom_{\P_\k}(I^{(r)},I^{(r)})$, so tensor products yield isomorphisms for $k,\ell\ge 1$:
$$
\hom_{\k}(\k^k,\k^\ell) \simeq \hom_{\P_\k}(I^{(r)}\otimes\k^k,I^{(r)}\otimes\k^\ell),\quad
f\mapsto \Id\otimes f,
$$
and the result follows.\end{proof}

If $A$ is an augmented $\P_\k$-(or $\F_\k$-)graded  algebra, we denote by $Q(A)$ the indecomposables of $A$, that is $Q(A)$ is the cokernel of the multiplication $A'\otimes A'\to A'$, where $A'$ is the augmentation ideal of $A$. 
If $A$ is a $\P_\k$-graded augmented algebra, then $Q(A)$ is a graded strict polynomial functor and $\U Q(A)=Q(\U A)$. 
Similarly the primitives $P(C)$ of a $\P_\k$-graded coaugmented coalgebra $C$ form a graded strict polynomial subfunctor of $C$ and $\U P(C)=P(\U C)$. 
The following lemma explains the link between additive and exponential functors.

\begin{lemme}\label{lm-exp-add}
Let $\k$ be a field, and let $E$ be an exponential functor. The graded strict polynomial functors $P(E)$ and $Q(E)$ are additive.
Moreover, if there exists $V\in\V_\k$ such that the composite $P(E)(V)\hookrightarrow E(V) \twoheadrightarrow Q(E)(V)$ is surjective, then $Q(E)$ is a direct summand in $E$.
\end{lemme}
\begin{proof}
Let $E'$ be the augmentation ideal of $E$. Since $E$ is exponential, we have 
$E'(V\oplus W)$ is isomorphic to $E'(V)\otimes\k\,\oplus \,\k\otimes E'(W)\,\oplus\, E'(V)\otimes E'(W)$. Moreover, the multiplication $E'(V\oplus W)^{\otimes 2}\to E'(V\oplus W)$ identifies through this decomposition with the direct sum of three maps (which are induced by multiplications):
\begin{align*}
&(E'(V)\otimes\k)^{\otimes 2}\to E'(V)\otimes\k,\\
&(\k\otimes E'(W))^{\otimes 2}\to \k\otimes E'(W),\\
&(E'(V)\otimes\k)\otimes (\k\otimes E'(W)) \oplus \text{\small\begin{tabular}{c} other summands\\ of $E'(V\oplus W)^{\otimes 2}$\end{tabular}} \to E'(V)\otimes E'(W).
\end{align*}
The first two maps have respective cokernels $Q(E)(V)$ and $Q(E)(W)$ and the last one is surjective. This shows that $Q(E)$ is additive.
The proof that the primitives are additive is similar.

Finally, if the map $P(E)(V)\to Q(E)(V)$ is surjective for some $V\in\V_\k$, then by lemma \ref{lm-classif-add}, it admits a section $\iota$. So the composite $Q(E)\xrightarrow[]{\iota} P(E)\hookrightarrow E$ is a section of $E\twoheadrightarrow Q(E)$.
\end{proof}
We are now ready to prove the main result of section \ref{sec-dpa}.
\begin{theoreme}\label{thm-strictpolalg1}
Let $\k$ be a field of prime characteristic $p$, and for all $d\ge 0$, let $F_d\in\F_\k$ be a finite direct sum of $n_d$ copies of the identity functor, and let $F=\bigoplus_{d\ge 0} F_d$
The graded $\P_\k$-algebras $A$ satisfying $\U A= U(F)$  are of the form:
$$ A=\Gamma(G_{\mathrm{even}})\otimes \Lambda(G_{\mathrm{odd}})\;, $$ 
where $G=\bigoplus_{d\ge 0} G_d$ and each $G_d\in\P_\k$ is a direct sum of $n_d$  Frobenius twists $I^{(r)}$ (with possibly different $r\ge 0$).
\end{theoreme}
\begin{proof}
{\bf Step 1: Duality.} Let $E$ be a graded exponential (non strict polynomial) functor. Then finding the $\P_\k$-graded algebras $A$ such that $\U A=E$ is equivalent to finding the $\P_\k$-graded algebras $B$ such that $\U B=E^\sharp$ as algebras.
Indeed, if $\U A=E$, then  for all $V,W\in\V_\k$ the composite
$$A(V)\otimes A(W)\to A(V\oplus W)^{\otimes 2}\to A(V\oplus W)$$
is an isomorphism (indeed, this is true for $\U A=E$, and the forgetful functor $\U$ reflects isomorphisms). Thus $A$ is an exponential functor, and $\U A$ coincides with $E$ as an exponential functor. Equivalently, $\U (A^\sharp)$ coincides with $E^\sharp$ as an exponential functor. This is in turn equivalent to the fact that $B=A^\sharp$ is a graded strict polynomial algebra such that $\U B$ coincides with $E^\sharp$ as a $\F_\k$-graded algebra.

So, to prove theorem \ref{thm-strictpolalg1}, it suffices to prove that the graded strict polynomial algebras $B$ such that $\U B=S(F_{\mathrm{even}})\otimes \Lambda(F_{\mathrm{odd}})$ are of the form $S(G_{\mathrm{even}})\otimes \Lambda(G_{\mathrm{odd}})$ with $G$ as indicated.

{\bf Step 2: Indecomposables.} If $B$ is as indicated in step 1, then the indecomposables of $B$ are a direct summand in $B$. Indeed, since $\U B=S(F_{\mathrm{even}})\otimes \Lambda(F_{\mathrm{odd}})$, there exists $V\in\V_\k$, e.g. $V=\k$, such that the composite $P(B)(V)\to B(V)\to Q(B)(V)$ is surjective. Then one applies lemma \ref{lm-exp-add}.

Now, the indecomposables of $B$ is an additive strict polynomial functor $Q(B)$ satisfying $\U Q(B)= Q(\U B)= F$. 
So by lemma \ref{lm-classif-add}, $Q(B)_d$ is a finite direct sum of $n_d$ Frobenius twists for all $d\ge 0$.

{\bf Step 3: Universal property.} The morphism of graded strict polynomial functors $Q(B)\hookrightarrow B$ induces a morphism of $\P_\k$-graded algebras $S(Q(B)_{\mathrm{even}})\otimes \Lambda(Q(B)_{\mathrm{odd}})\to B$. For all $V\in \V_\k$, this morphism is an isomorphism after evaluation on $V$. Hence, it is an isomorphism.
Thus  $B$ is of the form $S(G_{\mathrm{even}})\otimes \Lambda(G_{\mathrm{odd}})$ with $G=Q(B)$ as indicated in the statement of theorem \ref{thm-strictpolalg1}, which concludes the proof.
\end{proof}

\subsection{Strict polynomial structures on $\Gamma(F)$} Now we work over a field $\k$ of characteristic $2$. If $F$ is a graded strict polynomial functor, then the symmetric algebra $S(F)$ is commutative, so we can adapt the proof of theorem \ref{thm-strictpolalg1} to get the following result (the commutativity of $S(F)$ is needed in the last step of the proof, the remainder of the proof is unchanged).
\begin{theoreme}\label{thm-strictpolalg2}
Let $\k$ be a field of characteristic $2$, and for all $d\ge 0$, let $F_d\in\F_\k$ be a finite direct sum of $n_d$ copies of the identity functor, and let $F:=\bigoplus_{d\ge 0} F_d$.
The graded $\P_\k$-algebras $A$ satisfying $\U A= \Gamma(F)$  are of the form: $A=\Gamma(G)$, 
where $G=\bigoplus_{d\ge 0} G_d$ and each $G_d\in\P_\k$ is a direct sum of $n_d$  Frobenius twists $I^{(r)}$ (with possibly different $r\ge 0$).
\end{theoreme}

\section{Explicit computations over a field}\label{sec-bar-Gamma-F}

\subsection{Systems of divided powers on an algebra}

In this section, $\k$ is a commutative ring. We recall the basics of systems of divided powers. The reader can take \cite[Exp. 7, 8]{Cartan}, \cite[Appendix 2]{Eisenbud} 
as references. 

\begin{definition}
Let $A$ be a graded commutative $\k$-algebra. A system of divided powers on $A$ is a set of maps $(\gamma_r)_{r\ge 0}$ defined over the even degree part of $A$, and satisfying the following axioms:
\begin{itemize}
\item[$(a)$] $\gamma_0(x)=1$, $\gamma_1(x)=x$ and $\gamma_k$ maps $A_i$ into $A_{ki}$ for $i\ge 2$.
\item[$(b)$] $\gamma_k(x)\gamma_\ell(x)=\left(^{k+\ell}_{\;\;k}\right) \gamma_{k+\ell}(x)$.
\item[$(c)$] $\gamma_k(x+y)=\sum_{i+j=k}\gamma_i(x)\gamma_j(y)$.
\item[$(d)$] $\begin{array}[t]{cl}\gamma_k(xy)&=0\text{ if $k\ge 2$ and $x$ and $y$ have odd degrees, }\\
               & = x^k\gamma_k(y)\text{ if $k\ge 2$ and $|x|\ge 2$ is even  and $|y|$ is even. }
              \end{array}$
\item[$(e)$] $\gamma_\ell(\gamma_k(x))= \frac{(k\ell)!}{\ell!(k!)^\ell}\gamma_{k\ell}(x)$.
\end{itemize}
A morphism of algebras $f:A\to B$ preserves divided powers if $\gamma_k(f(x))=f(\gamma_k(x))$ for all $k$.
\end{definition}

Observe that the maps $\gamma_r$ are not $\k$-linear: actually, equation $(b)$ implies that $k!\gamma_k(x)=x^k$, which is the reason for the name `divided powers'. 
\begin{theoreme}[{\cite[Exp 8, Section 4]{Cartan}}]\label{thm-univ-Car}Let $V$ be a graded free $\k$-module. There exists a unique system of divided powers on the graded algebra $U(V)=\Gamma(V_{\mathrm{even}})\otimes \Lambda(V_{\mathrm{odd}})$ such that $\gamma_k(x)=x^{\otimes k}$ for $x\in V_{\mathrm{even}}$ and $k\ge 0$.

Moreover, for all commutative graded $\k$-algebra $A$ equipped with divided powers, and all graded $\k$-linear map $f:V\to A$, there exists a unique morphism of algebras $\overline{f}:U(V)\to A$ extending $f$ and preserving divided powers.
\end{theoreme}

Theorem \ref{thm-univ-Car} is a mean to construct morphisms of algebras with a very big image (compare with the image of the map induced by the universal property of the free graded commutative algebra on $V$). 
However, theorem \ref{thm-univ-Car} is not so efficient in characteristic $2$. 
For example, the graded $\k$-algebra $\Gamma(V)$ is graded commutative even if $V$ is concentrated in odd degree, and in that case, theorem \ref{thm-univ-Car} yields a morphism: $\Lambda(V_{\mathrm{odd}})\to \Gamma(V_{\mathrm{odd}})$ with quite a small image. To bypass this problem, there is a stronger notion of divided powers for strictly anticommutative (i.e.  graded commutative with $x^2=0$ if $x$ has odd degree) algebras in characteristic $2$. To avoid confusion, we call this notion `strong divided powers', although this notion is simply called `divided powers' in the literature.
\begin{definition}
Let $A$ be a strictly anticommutative graded $\k$-algebra, for $\k$ of characteristic $2$. A system of strong divided powers is a collection of maps $(\gamma_{r})_{r\ge 0}$ defined over the part of positive degree of $A$ and satisfying equations $(a)$-$(e)$ above.
\end{definition}
With this stronger notion of divided powers, theorem \ref{thm-univ-Car} becomes \cite[Exp 8, Thm 2 bis]{Cartan}:
\begin{theoreme}[{\cite{Cartan}}]Let $\k$ be a ring of characteristic $2$. Let $V$ be a graded free $\k$-module. There exists a unique system of strong divided powers on the graded algebra $\Gamma(V)$ such that $\gamma_k(x)=x^{\otimes k}$ for $x\in V$ and $k\ge 0$.

Moreover, for all strictly anticommutative graded $\k$-algebra $A$ equipped with strong divided powers, and all graded $\k$-linear map $f:V\to A$, there exists a unique morphism of algebras $\overline{f}:U(V)\to A$ extending $f$ and preserving divided powers.
\end{theoreme}

\subsection{Homology operations in bar constructions}

Let $\k$ be a commutative ring, and let $A$ be a cdg-$\k$-algebra. The homology of $\B^n A$ is equipped with the following homology operations, natural with respect to $A$.
\begin{enumerate}
\item {\bf Suspension.} 
For all $n\ge 0$ and all $k\ge 0$, there is a $\k$-linear suspension operation  \cite[Exp 6, sections 1 and 2]{Cartan}
$$\sigma:H_k(\B^nA)\to H_{k+1}(\B^{n+1} A)\;. $$
\item {\bf Transpotence.}
Assume $\k$ has prime characteristic $p\ne 2$. Then for all $n\ge 1$ and all $k\ge 1$ there is an additive (hence $\k$-linear if $\k=\Fp$) transpotence operation \cite[Exp 6, section 4]{Cartan}
$$\phi_p:H_{2k}(\B^nA)\to H_{p2k+2}(\B^{n+1} A)\;. $$
\item {\bf Divided powers.}
Assume that $A$ is strictly anticommutative\footnote{This applies to $\B A$, for $A$ graded commutative} (i.e. $a^2=0$ for $a$ of odd degree). Then for all $n\ge 1$, there is a system of divided powers $(\gamma_r)_{r\ge 0}$ on $H(\B^n A)$ \cite[Exp 7, Thm 1 and section 5]{Cartan}. If $\k$ has characteristic $2$, there is a system of strong divided powers on $H(\B^n A)$ \cite[Exp 7, section 8]{Cartan}.
\end{enumerate}
In our computation of $\B(\Gamma[2])$ in theorem \ref{thm-BnGamma-F}, we shall need the following complement on homological operations.
\begin{proposition}\label{prop-poids}
Let $\k$ be a commutative ring, and let $A$ be a cdg-$\k$-algebra with weights. The homology of $\B^n(A)$ is a wg-$\k$-algebra, and the homological operations have the following behavior with respect to weights. The suspension preserves the weights. The transpotence $\phi_p$ multiplies the weights by $p$, and the divided power operation $\gamma_r$ multiplies the weights by $r$.
\end{proposition}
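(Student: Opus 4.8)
The plan is to first upgrade the bar construction to the weighted setting, and then to follow the weights through Cartan's constructions of the three homology operations.

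\emph{Step 1: $\B^n(A)$ is a weighted cdg-algebra.} A weighted cdg-algebra is exactly a cdg-algebra internal to the symmetric monoidal category of $\N$-graded $\k$-modules (weight being the extra grading, which carries no Koszul signs). All the structure maps of the bar construction recalled in section~\ref{subsubsec-recoll-bar} --- the differential, the coproduct, and, when $A$ is graded commutative, the shuffle product --- are $\k$-linear maps given by formulas which, applied to a bracket $[a_1|\dots|a_n]$ of weight-homogeneous entries, produce linear combinations of brackets of total weight $\sum_i w(a_i)$; this uses only $w(a_ia_{i+1})=w(a_i)+w(a_{i+1})$ and $w(\partial a_i)=w(a_i)$. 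Hence, setting $w([a_1|\dots|a_n])=\sum_i w(a_i)$, the object $\B A$ is again a weighted cdg-algebra, and by iteration so is $\B^n A$ for every $n\ge 0$. Consequently $H(\B^n A)$ is a weighted graded $\k$-algebra.

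\emph{Step 2: the weight-transformation laws via scaling automorphisms.} For each of $\sigma$, $\phi_p$, $\gamma_r$ I would combine naturality with a scalar-homogeneity property. After base change to $\k[t,t^{-1}]$, the $\k[t,t^{-1}]$-algebra automorphism $u_t$ of $A\otimes_\k\k[t,t^{-1}]$ acting on the weight-$d$ summand by multiplication by $t^d$ is a morphism of weighted cdg-algebras; it induces on $\B^n A\otimes\k[t,t^{-1}]$, hence on $H(\B^n A)\otimes\k[t,t^{-1}]$, the automorphism acting by $t^d$ on the weight-$d$ part. Now $\sigma$, $\phi_p$ and $\gamma_r$ are natural with respect to morphisms of cdg-algebras, and they satisfy: $\sigma$ is $\k$-linear, $\phi_p(\lambda x)=\lambda^{p}\phi_p(x)$, and $\gamma_r(\lambda x)=\lambda^{r}\gamma_r(x)$ for a scalar $\lambda$ (the first is immediate; the last is a standard identity for divided powers, e.g. via Theorem~\ref{thm-univ-Car}; the middle one is read off from the chain-level construction below). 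Applying naturality to $u_t$ then forces: if $x\in H(\B^n A)$ has weight $d$, then $\sigma(x)$ has weight $d$, $\phi_p(x)$ has weight $pd$, and $\gamma_r(x)$ has weight $rd$.

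\emph{Step 3: the chain level, and where the homogeneities come from.} Equivalently, one may simply inspect Cartan's representatives. The suspension $\sigma\colon H_k(\B^n A)\to H_{k+1}(\B^{n+1}A)$ is induced on cycles by $z\mapsto[z]$, and $w([z])=w(z)$. For the transpotence, given $x\in H_{2k}(\B^n A)$ with representative a cycle $\xi$ of weight $d$, one uses that $\B^n A$ carries divided powers (here $n\ge 1$ is essential), so $x^{p}=p!\,\gamma_p(x)=0$ in $H_*(\B^n A)$, i.e. $\xi^{p}=\partial\eta$ with $w(\eta)=pd$; the representative of $\phi_p(x)$ in $\B^{n+1}A$ is then assembled from brackets built out of $p$ copies of $\xi$ and the bounding chain $\eta$, all of total weight $pd$ (since $\partial$ and the product of $\B^n A$ preserve weight), which also yields the scalar-homogeneity $\phi_p(\lambda x)=\lambda^p\phi_p(x)$ used above. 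Finally, for $x$ of even degree with representative $\xi$ of weight $d$, Cartan's $\gamma_r(x)$ is represented by the $r$-fold shuffle of $\xi$ with itself --- a sum of brackets each shuffling $r$ copies of the entries of $\xi$ --- hence of weight $rd$; in characteristic $2$ the same holds for strong divided powers.

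\emph{Main obstacle.} The weighted upgrade of the bar construction is routine. The only point requiring care is that the homology operations are defined by somewhat involved chain-level (acyclic-model) constructions in \cite{Cartan}, so one must recall just enough of each to see that the relevant cycle is weight-homogeneous of the claimed weight (equivalently, to pin down the scalar-homogeneity of $\phi_p$); everything else is bookkeeping.
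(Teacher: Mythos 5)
Your argument is essentially correct and, at its core (Step~3), follows the same route as the paper: unfold Cartan's chain-level constructions of $\sigma$, $\phi_p$, $\gamma_r$ far enough to read off the weight of a representative cycle. The paper does this a little more carefully via Cartan's acyclic construction $BC=C\otimes\B C$: there $\phi_p(\alpha)$ is represented by a $y$ with $\partial y=c^{p-1}*\partial x$ and $\partial x=c$, giving $w(y)=(p-1)w(c)+w(x)=pw(c)$, and $\gamma_r(c)$ is characterized by the recursion $\partial\gamma_r(c)=\partial c*\gamma_{r-1}(c)$, whence $w(\gamma_r(c))=w(c)+w(\gamma_{r-1}(c))=rw(c)$ by induction. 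Your Step~2 — base-change to $\k[t,t^{-1}]$ and exploit the weight-scaling automorphism $u_t$ together with naturality and scalar homogeneity of the operations — is a genuinely different and rather elegant packaging of the bookkeeping, but note it is not logically independent: the scalar homogeneity $\phi_p(\lambda x)=\lambda^p\phi_p(x)$ is not a formal property and you yourself derive it from the chain-level description, so Step~2 is a corollary of Step~3 rather than an alternative to it. One small imprecision in Step~3: $\gamma_r(\xi)$ is \emph{not} the $r$-fold shuffle $\xi^{*r}$ (that is $r!\gamma_r(\xi)$, which vanishes in the characteristics of interest); the actual representative of $\gamma_r$ is produced by Cartan's inductive formula, and the cleaner way to get its weight is the recursion above rather than an explicit shuffle. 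The weight conclusions you reach are all correct.
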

\begin{proof}To prove proposition \ref{prop-poids}, we have to go back to the definition of the homology operations. Let us denote by $C$ the wdg-$\k$-algebra $C=\B^n A$. Let us first treat the case of the suspension. It is defined on the chain level by the map $C\hookrightarrow \B C$ $c\mapsto [c]$. So it preserves the weights.

The cases of the transpotence and divided powers are slightly more involved. Recall from \cite[Exposes 3 et  4]{Cartan} that there is a wdg-$\k$-algebra $B C$ characterized by the following properties.
\begin{itemize}
\item[(i)] $B C:=C\otimes \B C$ as a graded algebra with weights. Thus, both $C$ and $\B C$ can be viewed as weighted graded subalgebras of $B C$.
\item[(ii)] $B C$ is equipped with a weight-preserving differential $\partial$, such that following maps are morphisms of wdg-$\k$-algebras:
$$C\xrightarrow[]{\Id\otimes 1} B C\;,\quad B C \xrightarrow[]{\epsilon\otimes \Id}\B C\;.$$
\item[(B)] For all $k\ge 1$, the composite $\B C_{k+1}\hookrightarrow B C_{k+1}\xrightarrow[]{\partial}B C_{k}$ is injective and induces an isomorphism $\B C_{k+1}\simeq Z(B C)_{k}$ onto the cycles of degree $k$ of $BC$ (in particular, $H_i(B C)=0$ for $i>0$). 
\end{itemize}
The transpotence $\phi_p$ is defined on the chain level as follows \cite[p. 6-05]{Cartan}. Let $c\in ZC_{2k}\subset Z (BC)_{2k}$ be a cycle representing a cohomology class $\alpha\in H_{2k}(C)$. There is an $x\in \B C_{2k}$ such that $\partial x=c$. Then there is an element $y\in Z(\B C)_{p2k+2}$ representing $\phi_p(\alpha)$ which satisfies $\partial y=c^{p-1}*\partial x$ ($*$ denotes the product in $BC$). So the weight $w(\phi_p(\alpha))$ of $\phi_p(\alpha)$ satisfies:
$$w(\phi_p(\alpha))= w(y)= w(c^{p-1})+w(x)=pw(c)=pw(\alpha)\;.$$ 

The divided powers are defined on the chain level $\overline{B}C$ by \cite[Exp 7, proof of Thm 1]{Cartan}: $\gamma_1(c)=c$ and $\partial\gamma_r(c)=\partial c*\gamma_{r-1}(c)$ for $r\ge 1$. Since $\partial$ preserves the weights $w(\gamma_r(c))=w(c)+w(\gamma_{r-1}(c))$ for $r\ge 1$ and the result follows by induction on $r$. 
\end{proof}

\subsection{Cartan's result}
We are now ready to state Cartan's computation of $H^{\mathrm{sing}}_*(K(\pi,n),\Fp)$. In view of corollary \ref{cor-fonc-alg}, we state the results in terms of the homology of $\B^n(\Gamma[2])$. Let us assume first that $p\ne 2$.

\begin{definition}\label{def-p-admiss} A \emph{$p$-admissible word}\footnote{Such words are called `mots admissibles de premi\`ere esp\`ece' in \cite[Exp 9, section 1]{Cartan}. We drop the words `premi\`ere esp\`ece' in the definition since we shall not need the `mots admissibles de deuxi\`eme esp\`ece' in the article.} is a finite sequence $\W$ of letters $\sigma$, $\phi_p$ and $\gamma_p$ starting on the left with the letter $\sigma$ or $\phi_p$, and finishing on the right with the letter $\sigma$, satisfying the following property. For all letter $\phi_p$ or $\gamma_p$ of $\W$, the number of $\sigma$ on the right of the letter is even.
The \emph{height} of a word $\W$ is the number of letters equal to $\sigma$ or $\phi_p$ in $\W$. The \emph{degree} $\deg(\W)$ of a word $\W$ is defined recursively by $\deg(\alpha)=0$ if $\alpha$ is empty, and
$$\deg(\sigma \alpha)=1+\deg(\alpha)\;,\quad \deg(\gamma_p \alpha)=p\deg(\alpha)\;,\quad \deg(\phi_p\alpha)=p\deg(\alpha)+2\;.$$
\end{definition}

\begin{example}
The $p$-admissible words of height $3$ are the words: 
$$\sigma\gamma_p^k\sigma\sigma\;, k\ge 0\;,\quad\text{ and }\quad \phi_p\gamma_p^k\sigma\sigma\;,k\ge 0\;.$$
(By convention $\gamma_p^k$ is empty if $k=0$). Moreover, $\deg(\sigma\gamma_p^k\sigma\sigma)=2p^k+1$ and $\deg(\phi_p\gamma_p^k\sigma\sigma)=2p^{k+1}+2$. The $p$-admissible words of height $4$ are the words:
$$\sigma\sigma\gamma_p^k\sigma\sigma\;,\; \sigma\gamma_p^k\phi_p\gamma_p^\ell\sigma\sigma\;,\text{ and } \phi_p\gamma_p^k\phi_p\gamma_p^\ell\sigma\sigma\;, \text{ for $k,\ell\ge 0$,} $$
of respective degrees $2p^k+2$, $2p^{k+\ell+1}+2p^k+1$ and $2p^{k+\ell+2}+2p^{k+1}+2$.
\end{example}

Recall that if $M$ is a graded $\Fp$-vector space, $U(M)$ denotes the universal divided power algebra over $M$, i.e. $U(M)=\Gamma(M_{\mathrm{even}})\otimes \Lambda(M_{\mathrm{odd}})$.

\begin{theoreme}[{\cite[Exp 9, th\'eor\`eme fondamental]{Cartan}}]\label{thm-Car-comput1} Let $p$ be an odd prime. For all $n\ge 0$,
there is an isomorphism of graded $\F_\Fp$-algebras 
$$H(\B^{n}(\Gamma[2]))\simeq U\left(\bigoplus_{\W} I [\deg(\W)]\right)\;,$$
where the direct sum on the right hand side is taken over all $p$-admissible words $\W$ of height $n+2$ and $I [\deg(\W)]$ means a copy of the identity functor $I:\V_\Fp\to \V_\Fp$ placed in homological degree $\deg(\W)$. 
\end{theoreme}

\begin{remark}\label{rk-comput-Car}
Let $\Gamma_\k$ denote the divided power algebra over a field $\k$. For all $\Fp$-module $V$ there is an isomorphism of graded $\k$-algebras $\Gamma_\Fp(V[2])\otimes_\Fp\k\simeq \Gamma_\k(V\otimes_\Fp\k\,[2])$, hence an isomorphism of graded $\k$-algebras:
$$H(\B^n(\Gamma_\Fp(V[2])))\otimes_\Fp\k\simeq H(\B^n(\Gamma_\Fp(V\otimes_\Fp\k\,[2])))\;.$$
Observe that the isomorphism above is natural with respect to the $\Fp$-vector space $V$ but does not tell us anything about the functoriality with respect to the $\k$-vector space $V\otimes_\Fp\k$. So we cannot deduce the description of the homology of iterated bar constructions over a general field $\k$, as a graded $\F_\k$-algebra, from theorem \ref{thm-Car-comput1}.
\end{remark}

Actually, Cartan's result is more precise. It says that each copy of the identity functor on the right hand side is simply obtained by applying a suitable sequence of homology operations to $I[2]=(\Gamma[2])_2=H_2(\B^2 S)$. For example, if $n=2$, the copy of $I$ corresponding to the word $\sigma\phi_p\gamma_p^3\sigma\sigma$ is the image of $I[2]$ by the sequence of operations:
\begin{align*}(\Gamma[2])_2\xrightarrow[]{\gamma_p^3}(\Gamma[2])_{2p^3}\xrightarrow[]{\phi_p}H_{2p^4+2}(\B(\Gamma[2]))\xrightarrow[]{\sigma}H_{2p^4+3}(\B^2(\Gamma[2])).\\ 
\end{align*}
In general, the operations to be applied are the one needed to complete the final letters `$\sigma\sigma$' in order to obtain the word indexing the copy of the identity functor $I$ considered, starting from the right to the left. This produces a morphism of graded functors:
$$\textstyle\bigoplus_{\W} I [\deg(\W)]\to H(\B^{n}(\Gamma[2]))$$
and the isomorphism of theorem \ref{thm-Car-comput1} is produced from this morphism and the universal property of $U$.

Let us now describe the case $p=2$. This case is actually simpler. The definition of admissible words is modified as follows.
\begin{definition} A \emph{$2$-admissible word} is a finite sequence $\W$ of letters $\sigma$, and $\gamma_2$ starting with the letter $\sigma$ and finishing with the letters $\sigma\sigma$.
The \emph{height} of a word $\W$ is the number of $\sigma$ in $\W$. Its \emph{degree} $\deg(\W)$ is defined recursively by $\deg(\alpha)=0$ if $\alpha$ is empty, and
$\deg(\sigma \alpha)=1+\deg(\alpha)$ and $\deg(\gamma_2\alpha)=2\deg(\alpha)$.
\end{definition}

\begin{example}
The $2$-admissible words of height $3$ are the words $\sigma\gamma_2^k\sigma\sigma$ with $k\ge 0$, and 
$\deg(\sigma\gamma_2^k\sigma\sigma)=2^{k+1}+1$.
The $2$-admissible words of height $4$ are the words
$\sigma\gamma_2^k\sigma\gamma_2^\ell\sigma\sigma$ for $k,\ell\ge 0$ of
degree $2^{k+\ell+1}+2^{k}+1$.
\end{example}

The analogue of theorem \ref{thm-Car-comput1} over $\Fdeux$ takes the following form \cite[Exp 10, th\'eor\`eme fondamental]{Cartan}.

\begin{theoreme}[{\cite{Cartan}}]\label{thm-Car-comput2} For all $n\ge 0$,
there is an isomorphism of graded $\F_\Fdeux$-algebras 
$$H(\B^{n}(\Gamma[2]))\simeq \Gamma\left(\bigoplus_{\W} I [\deg(\W)]\right)\;,$$
where the direct sum on the right hand side is taken over all $2$-admissible words $\W$ of height $n+2$ and $I [\deg(\W)]$ means a copy of the identity functor $I:\V_\Fdeux\to \V_\Fdeux$ placed in homological degree $\deg(\W)$. 
\end{theoreme}

\subsection{Homology of $\overline{B}^n(\Gamma[2])$ over an arbitrary field}
We are now ready to prove the main result of section \ref{sec-bar-Gamma-F}, namely the computation of the homology of $\overline{B}^n(\Gamma[2])$, as a strict polynomial algebra, and over an arbitrary field $\k$. We first need to introduce a definition.
\begin{definition}\label{def-twisting-word}
Let $\W$ be a $p$-admissible word ($p$ even or odd). The \emph{twisting} of a $p$-admissible word $\W$ is the number of letters equal to $\phi_p$ or $\gamma_p$ in $\W$. We denote the twisting of $\W$ by  $t_\W$.
\end{definition}

\begin{theoreme}\label{thm-BnGamma-F}
Let $\k$ be a field of positive characteristic $p$. If $p$ is odd, we have an isomorphism of graded $\P_\k$-algebras:
$$H(\B^n(\Gamma[2]))\simeq U\left(\bigoplus_\W I^{(t_\W)}[\deg(\W)]\right)\;, $$  
where the sum is taken over all $p$-admissible words $\W$ of height $n+2$, and $I^{(t_\W)}[\deg(\W)]$ denotes a copy of the $t_\W$-th Frobenius twist functor, placed in homological degree $\deg(\W)$. If $p=2$ we have an isomorphism of graded $\P_\k$-algebras:
$$H(\B^n(\Gamma[2]))\simeq \Gamma\left(\bigoplus_\W I^{(t_\W)}[\deg(\W)]\right)\;, $$
where the sum is taken over all $2$-admissible words $\W$ of height $n+2$.
\end{theoreme}
\begin{proof} Let us prove the case $p$ odd (the case $p=2$ is similar). We start with the case $\k=\Fp$. By theorem \ref{thm-Car-comput1}, the graded $\F_\Fp$-algebra
$\U H(\B^n(\Gamma[2]))$ is isomorphic to $U\left(\bigoplus_\W I[\deg(\W)]\right)$. Hence by theorem \ref{thm-strictpolalg1}, the graded $\P_\Fp$-algebra $H(\B^n(\Gamma[2]))$ is of the form $U\left(\bigoplus_\W I^{(r_\W)}[\deg(\W)]\right)$, where the $r_\W$ are nonnegative integers which we have to determine. Actually, these integers are given by the weights of the $H(\B^n(\Gamma[2]))$. Let $V\in\V_\Fp$. By proposition \ref{prop-poids} the morphism induced by the cohomology operations 
$$U(\textstyle\bigoplus_\W V[\deg(\W)])\to H(\B^n \Gamma (V[2]))$$
becomes a morphism of weighted graded $\Fp$-algebras if we let the copy of $V$ indexed by $\W$ have weight $t_\W$. Hence, the weighted graded algebra structure of $H(\B^n(\Gamma[2]))$ implies that $r_\W=t_\W$.

To get the result for all fields $\k$ it suffices to use the exact base change for strict polynomial functors (Notice: this feature is specific to \emph{strict polynomial} functors!). If $\k$ is a field of characteristic $p$, there is an exact functor \cite[Prop 2.6]{SFB}: $\otimes_{\Fp}\k:\P_{\Fp}\to \P_{\k}$. This base change functor commutes with tensor products and sends divided powers to divided powers and Frobenius twists to Frobenius twists. Whence the result.
\end{proof}

\begin{remark}\label{rk-poids-nat}
At first sight, the result for $p=2$ seems very different from the result for $p$ odd. However, if $p=2$ and $V\in\V_{\Fdeux}$, there is an isomorphism of graded algebras with weights (not natural with respect to $V$): $$\Gamma(V^{(r)}[i])\simeq \Lambda(V^{(r)}[i])\otimes \Gamma(V^{(r+1)}[2i])$$  
Indeed, we can prove it if $V=\Fdeux$ by direct inspection, and get the general result by the exponential formula. 
\end{remark}

\subsection{Ext-computations over a field}

Combining theorem \ref{thm-BnGamma-F} and theorem \ref{thm-bar}, we obtain the following computations of the $\P_\k$-graded algebras $\E(S,\Gamma)$ and $\E(S,\Lambda)$. Comparison with earlier computations of these algebras by other authors is made in section \ref{subsec-compare}.

\begin{theoreme}\label{thm-calculsanstwistI}
Let $\k$ be a field of odd characteristic $p$, and let $V$ be a finite dimensional $\k$-vector space. Let us denote by $I^{(k)}\langle i\rangle$ a copy of the $k$-th Frobenius twist functor, placed in cohomological degree $i$ (thus, $I^{(k)}\langle i\rangle=I^{(k)}[-i]$). We have an isomorphism of $\P_\k$-graded algebras:
\begin{align*}
&\E(S,\Lambda)\simeq \;\Lambda\left(\bigoplus_{k\ge 0}I^{(k)}\langle p^k-1\rangle\right)\otimes^1\,^\t \Gamma\left(\bigoplus_{k\ge 0} I^{(k+1)}\langle p^{k+1}-2\rangle\right)\,, \\
\end{align*}
where $A\otimes^1 B$ denotes the `signed tensor product' of two $\P_\k$-graded algebras and $^\t A$ denotes the weight twisted algebra associated to $A$, as defined in section \ref{subsec-1E-com}. We also have an isomorphism of $\P_\k$-graded algebras:
\begin{align*}
& \E(S,\Gamma)\simeq \\&\qquad\Gamma\left(\bigoplus_{k\ge 0} V^{(k)}\langle 2p^k-2\rangle\right)\otimes
\Lambda\left(\bigoplus_{k\ge 0,\ell\ge 0}V^{(k+\ell+1)}\langle 2p^{k+\ell+1}-2p^{k}-1\rangle\right)\\& \qquad\qquad\qquad\qquad\qquad\otimes \Gamma\left(\bigoplus_{k\ge 0,\ell\ge 0} V^{(k+\ell+2)}\langle 2p^{k+\ell+2}-2p^{k+1}-2\rangle\right)\;.
\end{align*} 
\end{theoreme}
\begin{proof}
Let us prove  the first isomorphism, the second one is similar. Theorem \ref{thm-bar} yields an isomorphism between $\E(S,\Lambda)$ and 
$^\t\R_3(H\overline{B}(\Gamma[2]))$. The homology of $\B(\Gamma[2])$ is computed in theorem \ref{thm-BnGamma-F}, and applying the computation rules for $\R_\alpha$ developed in section \ref{sec-regrading}, we obtain 
\begin{align*}^t\E(S,\Lambda)=&\R_3\Lambda(\textstyle\bigoplus_{k\ge 0}I^{(k)}[2p^k+1])\otimes^1 \R_3\Gamma(\textstyle\bigoplus_{k\ge 0}I^{(k+1)}[2p^{k+1}+2])\;,\\
=& ^t\Lambda(\textstyle\bigoplus_{k\ge 0}I^{(k)}[-p^k+1])\otimes^1 \Gamma(\textstyle\bigoplus_{k\ge 0}I^{(k+1)}[-p^{k+1}+2])\;.
\end{align*}
Using $I^{(k)}[-i]=I^{(k)}\langle i\rangle$, we obtain the result.
\end{proof}

\begin{theoreme}\label{thm-calculsanstwistII}
Let $\k$ be a field of characteristic $p=2$, and let $V$ be a finite dimensional $\k$-vector space. Let us denote by $I^{(k)}\langle i\rangle$ a copy of the $k$-th Frobenius twist functor, placed in cohomological degree $i$. We have isomorphisms of $\P_\k$-graded algebras:
\begin{align*}
&\E(S,\Lambda)\simeq \Gamma\left(\bigoplus_{k\ge 0}I^{(k)}\langle p^k-1\rangle\right)\,, \\
& \E(S,\Gamma)\simeq \Gamma\left(\bigoplus_{k\ge 0,\ell\ge 0} I^{(k+\ell)}\langle 2p^{k+\ell}-p^{k}-1\rangle\right)\;.
\end{align*}
\end{theoreme}

\section{Explicit computations over the integers}\label{sec-bar-Gamma-Z}

In this section, we work over the ground ring $\k=\Z$. We elaborate on Cartan's computation of the homology of EML-spaces with integral coefficients to compute the homology of $\B^n(\Gamma[2])$. Actually, Cartan made two descriptions of the integral homology of EML spaces $K(\pi,n)$: a compact description, which is not natural with respect to $\pi$ (this is \cite[Exp 11, {th\'eor\`eme} 1]{Cartan}), and a description by generators and relations, which is natural with respect to $\pi$, but unfortunately quite complicated (this is \cite[Exp 11, {th\'eor\`eme} 6]{Cartan}). For the sake of simplicity, we have chosen to use the compact description of the homology of EML spaces, therefore we only compute the homology of $\B^n\Gamma (\Z^m[2])$ as graded algebras with weights. As a corollary, we compute $\E(S,\Lambda)(\Z^m)$ and $\E(S,\Gamma)(\Z^m)$ as wdg-$\Z$-algebras.
\subsection{Dual Koszul and De Rham algebras}
If $V$ is a graded $\Z$-module, we  denote by $V[j]$ its homological suspension, that is, $V[j]_i:=V_{\,i-j}$. 

\begin{definition}[Dual Koszul algebra]
Let $V$ be a positively graded $\Z$-free module concentrated in odd degrees. Then $\Gamma(V[1])\otimes \Lambda(V)$ is a commutative graded-$\Z$-algebra. If $h$ is a positive integer, we define a differential $d_K$ as the composite:
\begin{align*}\Gamma^n(V[1])\otimes \Lambda^k(V)&\xrightarrow[]{\Delta\otimes \Id} \Gamma^{n-1}(V[1])\otimes V[1]\otimes \Lambda^k(V)\\\xrightarrow[]{h\Id} &\Gamma^{n-1}(V[1])\otimes V\otimes \Lambda^k(V)\xrightarrow[]{\Id\otimes m} \Gamma^{n-1}(V[1])\otimes \Lambda^{k+1}(V)\;. \end{align*} 
This makes $(\Gamma(V[1])\otimes \Lambda(V),d_K)$ into a commutative differential graded algebra which we call the `dual Koszul algebra'. We denote it by $K^h(V)$.
\end{definition}

If $h=1$, $K^h(V)$ is the graded dual of the usual Koszul algebra, see e.g. \cite[Section 4]{FFSS}, whence the name. If $V$ is a copy of $\Z$ in odd degree, then $K^h(V)$ is nothing but an elementary complex of type (II) from \cite[Exp 11]{Cartan}. Dual Koszul algebras satisfy an exponential property: the following composite (where the first map is induced by the canonical inclusions into $V\oplus W$) is an isomorphism of differential graded algebras
$$K^h(V)\otimes K^h(W)\to
K^h(V\oplus W)^{\otimes 2}\xrightarrow[]{m}K^h(V\oplus W)\;.$$ 
Finally, we observe that when $V$ is equipped with weights, $K^h(V)$ is canonically made into a wdg-$\Z$-algebra. 

Assume now that $V$ is concentrated in even degrees. We can adapt the definition of the dual Koszul algebra by exchanging the roles of exterior and divided powers. This yields the dual De Rham algebra $\Omega^h(V)$. 
\begin{definition}[Dual De Rham algebra]
Let $V$ be a positively graded $\Z$- free module, and let $h$ be an integer. The dual De Rham algebra $\Omega^h(V)$ is the cdg-$\Z$-algebra which equals $\Gamma(V)\otimes \Lambda(V[1])$ as a commutative graded algebra and whose differential equals the composite
\begin{align*}\Gamma^n(V)\otimes \Lambda^k(V[1])&\xrightarrow[]{\Id\otimes\Delta} \Gamma^{n}(V)\otimes V[1]\otimes \Lambda^{k-1}(V[1])\\\xrightarrow[]{h\Id} &\Gamma^{n}(V)\otimes V\otimes \Lambda^{k-1}(V[1])\xrightarrow[]{m\otimes\Id} \Gamma^{n+1}(V)\otimes \Lambda^{k-1}(V[1])\;. \end{align*} 
\end{definition}

If $h=1$, $\Omega^h(V)$ is the graded dual of the usual De Rham algebra, and if $V$ is a copy of $\Z$ in even degree, it is an elementary complex of type (II) from \cite[Exp 11]{Cartan}. There are isomorphisms of dg-$\Z$-algebras $\Omega^h(V)\otimes \Omega^h(W)\simeq \Omega^h(V\oplus W)$, and when $V$ is equipped with weights, $\Omega^h(V)$ becomes a wdg-$\Z$-algebra.

\subsection{Cartan's result} 

We fix a free $\Z$-module $V=\Z^m$. and a positive integer $n$. We are going to present Cartan's computation of the homology of EML spaces from \cite[Exp. 11, Theoreme 1]{Cartan}. 
Recall from definition \ref{def-p-admiss} the $p$-admissible words of height $n$ attached to a prime $p$ (here we also use definition \ref{def-p-admiss} when $p=2$). The word $\sigma^n$ is $p$-admissible of height $n$, and the other $p$-admissible words of height $n$ can be grouped in a unique way into pairs of the form $(\sigma^{k+1}\gamma_p\alpha,\sigma^k\phi_p\alpha)$, where $\alpha$ is denotes a word and $k\ge 0$. In such a pair, the degrees of the two words differ by one: $\deg(\sigma^k\phi_p\alpha)= \deg(\sigma^{k+1}\gamma_p\alpha)+1$.
\begin{definition}
We call \emph{$p$-pair of height $n$} a pair of $p$-admissible words of height $n$ of the form $\Pa=(\sigma^{k+1}\gamma_p\alpha,\sigma^k\phi_p\alpha)$ (where $\alpha$ is a word, and $k\ge 0$). 
The \emph{degree} of the pair $\Pa$ is the lowest degree of the words of the pair, that is $\deg(\Pa)=\deg(\sigma^{k+1}\gamma_p\alpha)$.
\end{definition}

\begin{example}\label{ex-p-pairs}
If $n=3$, the $p$-pairs of height $n$ are the pairs $$\Pa_k=(\sigma\gamma_p^{k+1}\sigma^2\,, \,\phi_p\gamma_p^k\sigma^2)\;,\quad\text{ for $k\ge 0$,}$$ 
and $\deg(\Pa_k)=2p^{k+1}+1$. If $n=4$ the $p$-pairs of height $n$ are the pairs
\begin{align*}
&\Pa_{\ell}=(\sigma^2\gamma_p^{\ell+1}\sigma^2, \sigma\phi_p\gamma_p^{\ell}\sigma^2)\;, && \text{ for $\ell\ge 0$,} \\
&\Pa_{k,\ell}=(\sigma\gamma_p^{k+1}\phi_p\gamma_p^\ell\sigma^2\,,\,\phi_p\gamma_p^k\phi_p\gamma_p^\ell\sigma^2)\;,&&\text{ for $k\ge 0,\ell\ge 0$,}
\end{align*}
and their degrees are $\deg(\Pa_\ell)=2p^{\ell+1}+2$, $\deg(\Pa_{k,\ell})= 2p^{\ell+k+2}+2p^{k+1}+1$.
\end{example}

\begin{definition}
We denote by $X_p^{[n]}$ the cdg-$\Z$-algebra defined by:
$$X_p^{[n]}=K^p\left(\bigoplus_{\Pa} V[\deg(\Pa)]\right)\otimes \Omega^p\left(\bigoplus_{\Pa'} V[\deg(\Pa')]\right)\;,$$
where the first direct sum is taken over all the $p$-pairs $\Pa$ of height $n$ and odd degree, and the second one is taken over all the $p$-pairs $\Pa'$ of height $n$ and even degree. We also denote by $X_0^{[n]}$ the cdg-$\k$-algebra which equals $\Lambda(V[n])$ if $n$ is odd and $\Gamma(V[n])$ if $n$ is even, with trivial differential.
\end{definition}

Let us denote by $_p M$ the $p$-primary part of a $\Z$-module $M$:
$$_p M=\{m\in M\;; \exists k\; p^km=0\}\;.$$
The homology of $X_p^{[n]}$ equals $\Z$ in degree zero, so its $p$-primary part $_pH(X_p^{[n]})$ is a graded subalgebra of $H(X_p^{[n]})$ without unit (it is concentrated in positive degrees).
We make it into a unital $\Z$-algebra $\widehat{_pH}(X_p^{[n]})$ in the canonical way:
$$\widehat{_pH_0}(X_p^{[n]}) =\Z\,,\qquad \widehat{_pH_i}(X_p^{[n]})=\,_pH_i(X_p^{[n]}) \text{ for $i>0$.}$$
In view of corollary \ref{cor-fonc-alg}, Cartan's computation of the integral homology of the EML-spaces $K(\Z^m,n+2)$ can be formulated in the following way. 
\begin{theoreme}[{\cite[Exp. 11, Theoreme 1]{Cartan}}]\label{thm-CartanZ}
Let $n$ be a positive integer, let $V=\Z^m$ be a free abelian group. The homology of $\B^n\Gamma(V[2])$ is isomorphic as a graded algebra, to the tensor product
$$X_0^{[n+2]}\otimes\bigotimes_{\text{$p$ prime}} \widehat{_pH}(X_p^{[n+2]})\;.$$
\end{theoreme}

\subsection{Integral torsion of $\Ext$s}
To be able to identify $\E^i(S^d,\Lambda^d)(V)$ or $\E^i(S^d,\Gamma^d)(V)$ explicitly as a direct summand of the homology of $\B^n\Gamma(V[2])$, we have to describe the later as a graded algebra with weights. However, some interesting information can already be extracted from theorem \ref{thm-CartanZ} without describing the weights, in particular for the case $n=1$. 

Indeed, observe in example \ref{ex-p-pairs} that all the $p$-pairs of height $3$ have odd degrees. So for all prime $p$, $X_p^{[3]}$ is just the dual Koszul algebra 
$$X_p^{[3]}=K^p\left(\bigoplus_{k\ge 0}V[2p^{k+1}+1]\right)\;. $$
\begin{lemme}\label{lm-p-tors} $H_i(X_p^{[3]})$ is a $\Fp$-vector space if $i>0$ and equals $\Z$ if $i=0$.
\end{lemme}
\begin{proof}
If $W$ is a graded free $\Z$-module in positive odd degrees, the dual Koszul algebra $K^1(W)$ satisfies $H_i(K^1(W))=\Z$ if $i=0$ and zero otherwise (use the exponential formula and the K\"unneth morphism to reduce to the case when $W$ has rank one, then the result is easy). Since the differential $\partial$ of $K^p(W)$ is $p$ times the differential $\delta$ of $K^1(W)$, we have $H_0(K^p(W))=\Z$, and if $i>0$:
$$H_i(K^p(W))= \mathrm{Ker}\partial/\mathrm{Im}\partial = \mathrm{Ker}\delta/p(\mathrm{Im}\delta) = \mathrm{Im}\delta/p(\mathrm{Im}\delta)\simeq\mathrm{Im}\delta\otimes_\Z\Fp\;.$$
This proves the result.
\end{proof}
Lemma \ref{lm-p-tors} implies that the $p$-primary part of the homology of $\B\Gamma(V[2])$, hence of $\E(S,\Lambda)(V)$, is a $\Fp$-vector space. This information is precious because it implies that $\E(S,\Lambda)(V)$ can be retrieved from the analogous extensions over $\Fp$ by the universal coefficient theorem. 
\begin{theoreme}\label{thm-PSZ} Let $V=\Z^m$.
Then $\E^0(S^d,\Lambda^d)(V)\simeq \Gamma^d(V)$, and for $i$ positive, the $p$-primary part of $\E^i(S^d,\Lambda^d)(V)$ is a finite dimensional $\Fp$-vector space. Moreover let $P(t)$, resp. $Q(t)$, be the Poincar\'e series of the graded $\Fp$-vector space $\E(S^d,\Lambda^d)(V)\otimes_\Z\Fp$, resp. $\E(S^d_\Fp,\Lambda^d_\Fp)(V/pV)$. Then we have $(1+t)P(t)=tQ(t)+P(0)$.
\end{theoreme}
\begin{proof}
The computation of $\E^0(S^d,\Lambda^d)(V)$ was made in lemma \ref{lm-element}. It is well known that $\E^{i}(S^d,\Lambda^d)(V)$ is a finite abelian group for $i$ positive. This is actually true for extensions between arbitrary strict polynomial functors $F,G$ with finitely generated values, but in our case it can be directly proved as follows. First, $\E^{i}(S^d,\Lambda^d)(V)$ is a finitely generated abelian group. Indeed, it is the $d-i$-th homology group of the complex $\H(S^d,I_{\bullet,d})(V)$ from section \ref{subsubsec-bar-sym-ext}, which is a complex of free finitely generated $\Z$-modules. Then by base change \cite[prop 2.6]{SFB} and the universal coefficient theorem, $\E^{i}(S^d,\Lambda^d)(V)\otimes_\Z\mathbb{Q}$ is isomorphic to  
$\E^{i}(S^d_\mathbb{Q},\Lambda^d_\mathbb{Q})(V\otimes_\Z\mathbb{Q})$ which is zero because the category $\P_\mathbb{Q}$ is semi-simple. Thus $\E^{i}(S^d,\Lambda^d)(V)$ is a finite abelian group. Now lemma \ref{lm-p-tors} and theorem \ref{thm-CartanZ} show that the $p$-primary part of $\B\Gamma(V[2])$, hence of $\E^i(S^d,\Lambda^d)(V)$, is a $\Fp$-vector space.
Finally, by base change \cite[prop 2.6]{SFB} and the universal coefficient theorem, there is for all $i$ an isomorphism of $\Fp$-vector spaces:
$$\E^i(S^d,\Lambda^d)(V)\otimes_\Z\Fp\oplus \E^{i+1}(S^d,\Lambda^d)(V)\otimes_\Z\Fp \simeq \E^i(S^d_\Fp,\Lambda^d_\Fp)(V/pV)\;.$$
The assertion on the Poincar\'e series follows.
\end{proof}
The Poincar\'e series $Q(t)$ are easy to determine from theorems \ref{thm-calculsanstwistI} and \ref{thm-calculsanstwistII}, and they were first determined in \cite{A} for $V=\Z$. Since we know $P(0)=\dim_\Fp(\Gamma^d(V)\otimes\Fp)$, the Poincar\'e series $P(t)$ are easily computed from the equation $(1+t)P(t)=tQ(t)+P(0)$ of theorem \ref{thm-PSZ}. 
To illustrate this, we give the values of $\Ext^i_{\P_\Z}(S^n,\Lambda^n)$ in low degrees (for $i=1$ they were first computed in \cite[section 4]{A}).
\begin{example} We have the following $\Ext$-computations:
\begin{align*}&\Ext^1_{\P_\Z}(S^n,\Lambda^n)=\Z/2\Z \text{ if $n\ge 2$, and zero if $n\le 1$,} \\
&\Ext^2_{\P_\Z}(S^n,\Lambda^n)=\Z/3\Z \text{ if $n=3,4$ and zero otherwise}\;,\\
&\Ext^3_{\P_\Z}(S^n,\Lambda^n)= \left\{{\begin{array}{cl}
                                         0 &\text{ if $n\le 3$,}\\
                                         \Z/2\Z\oplus\Z/3\Z &\text{ if $n=6,7$,}\\
                                         \Z/2\Z &\text{ otherwise.}  
                                        \end{array}}\right.
\end{align*}
\end{example}

In the case $n=2$, $X_p^{[4]}$ is a tensor product of a dual Koszul algebra and a dual De Rham algebra. The latter brings $p^r$-torsion for all values of $r\ge 0$ in the homology of $X_p^{[4]}$. So in contrast to theorem \ref{thm-PSZ} we have the following result.
\begin{proposition}Let $V=\Z^m$. For all prime $p$ and all $r\ge 0$, there exists positive integers $i,d$ such that the abelian group $\E^i(S^d,\Gamma^d)(V)$ has an element of $p^r$-torsion.
\end{proposition}

\subsection{Computation of weights} The $\P_\Z$-dg-algebra $\B^n(\Gamma[2])$ becomes a wdg-$\Z$-algebra after evaluation on $V=\Z^m$, so the homology of $\B^n\Gamma(V[2])$ is equipped with weights. We are now going to supplement Cartan's result by describing these weights. 
\begin{definition}
Let $\Pa$ be a $p$-pair. The \emph{weight} of $\Pa$ is the integer $w(\Pa)$ defined by $w(\Pa)=p^{t_\Pa}$, where $t_\Pa$ is the number of letters which equal $\phi_p$ or $\gamma_p$ in one of the two words of $\Pa$ (compare definition \ref{def-twisting-word}).
\end{definition}
\begin{example} We keep the notations of example \ref{ex-p-pairs}.  
For the $p$-pairs of height $n=3$, we have $w(\Pa_k)=p^{k+1}$. For the $p$-pairs of height $4$ we have:
$w(\Pa_\ell)=p^{\ell+1}$ and $w(\Pa_{k,\ell})=p^{k+\ell+2}$.
\end{example}

\begin{theoreme}\label{thm-BGamma-Z}
Let $n$ be a nonnegative integer, let $V=\Z^m$ be a free abelian group. There is an isomorphism of graded algebras with weights
$$X_0^{[n+2]}\otimes\bigotimes_{\text{$p$ prime}} \widehat{_pH}(X_p^{[n+2]})\simeq H\left(\B^n(\Gamma( V[2]))\right)\;.$$
On the left hand side, $X_p^{[n+2]}$ denotes the wdg-$\Z$-algebra defined by:
$$X_p^{[n+2]}=K^p\left(\bigoplus_{\Pa} V_{w(\Pa)}[\deg(\Pa)]\right)\otimes \Omega^p\left(\bigoplus_{\Pa'} V_{w(\Pa')}[\deg(\Pa')]\right)\;,$$
where $V_k[i]$ denotes a copy of $V$ having weight $k$ and degree $i$, and where the first direct sum is taken over a the $p$-pairs $\Pa$ of height $n+2$ and odd degree and the second direct sum is taken over the $p$-pairs $\Pa'$ of height $n+2$ and even degree. Moreover, $X_0^{[n+2]}$ equals $\Lambda(V_1[n+2])$ if $n$ is odd, and $\Gamma(V_1[n+2])$ if $n$ is even. 
\end{theoreme}

Before we prove theorem \ref{thm-BGamma-Z}, we recall the universal property of dual Koszul and De Rham algebras. If $W$ is a graded $\Z$-module we denote by $W_{\mathrm{odd}}$, resp. $W_{\mathrm{even}}$, its summand of odd, resp. even, degree.
\begin{lemme}[see {\cite[Exp. 1, section 2]{Cartan}}]\label{lm-univ}
Let $W$ be a positively graded $\Z$-free module, let $h$ be a integer, and let $C^h(W)$ be the complex $W[1]\xrightarrow[]{h\Id} W$. This complex is a direct summand of $K^h(W_{\mathrm{odd}})\otimes \Omega^h(W_{\mathrm{even}})$. Assume that $A$ is a cdg-$\Z$-algebra, equipped with a system of divided powers, and free as a $\Z$-module. For all morphism of complexes $f:C^h(W)\to A$ there exists a unique morphism of cdg-$\Z$-algebras $\overline{f}$ such that
the following diagram commutes:
$$\xymatrix{
C^h(W)\ar[rr]^-{f}\ar@{^{(}->}[d]&& A\\
K^h(W_{\mathrm{odd}})\otimes \Omega^h(W_{\mathrm{even}})\ar@{-->}[rru]_-{\overline{f}}
}$$
If $W$ is equipped with weights and $A$ is a cwdg-$\Z$-algebra, then $\overline{f}$ preserves the weights if and only if $f$ does.
\end{lemme}
\begin{proof}
The morphism of algebra $\overline{f}$ is obtained by the universal property of the graded algebra with divided powers $K^h(W_{\mathrm{odd}})\otimes \Omega^h(W_{\mathrm{even}})=U(W[1]\oplus W)$ (see theorem \ref{thm-univ-Car}). In particular $\overline{f}$ preserves the weights if and only if $f$ does. So one has only to check that $\overline{f}$ commutes with the differentials. Using that $d(\gamma^k(x))=dx\cdot \gamma^{k-1}x$ in $K^h(W_{\mathrm{odd}})\otimes \Omega^h(W_{\mathrm{even}})=U(W[1]\oplus W)$ and $A$, one reduces the proof that $\overline{f}$ commutes with differentials to the proof that the restriction of $\overline{f}$ to the generators of $K^h(W_{\mathrm{odd}})\otimes \Omega^h(W_{\mathrm{even}})=U(W[1]\oplus W)$ commutes with the differential. But this restriction is nothing but $f$. Whence the result.
\end{proof}

\begin{proof}[Proof of theorem \ref{thm-BGamma-Z}] Denote by $H(V,n+2,\k)$ the singular homology of the EML space $K(V,n+2)$ with coefficients in $\k$. Let $A$ be a differential graded $\Z$-algebra such that
\begin{enumerate}
\item[(i)] $A$ is graded commutative, $\Z$-free, equipped with divided powers\;,
\item[(ii)] The homology of $A$ is isomorphic to $H(V,n+2,\Z)$\;,
\item[(iii)] For all prime $p$, the homology of the $\Fp$-differential graded algebra $A\otimes_\Z \Fp$ is isomorphic to $H(V,n+2,\Fp)$\;.
\end{enumerate}
To prove theorem \ref{thm-CartanZ}, Cartan builds morphisms of dg-algebras 
$$f_p:X_p^{[n+2]}\to A\,,$$
(for $p$ zero or prime) which induce after taking homology the isomorphism 
$$\textstyle X_0^{[n+2]}\otimes\bigotimes_{\text{$p$ prime}} \widehat{_pH}(X_p^{[n+2]})\simeq H(V,n+2,\Z)\;.$$
In Cartan's proof, the algebra $A$ is $\B^{n+2}(\Z V)$, but any other algebra $A$ satisfying conditions $(i)$, $(ii)$ and $(iii)$ works as well in his argument, for example we can take $A=\B^n(\Gamma(V[2]))$. 

So, to prove theorem \ref{thm-BGamma-Z}, it suffices to prove that the morphisms $f_p$ preserve weights when $A=\B^n \Gamma(V[2])$ and when the weights on $X_p^{[n+2]}$ are as indicated in the statement of theorem \ref{thm-BGamma-Z}. 

{\bf Case of $f_p$, $p$ prime.} By the universal property of Dual Koszul and De Rham algebras from lemma \ref{lm-univ}, the construction of $f_p$ (preserving weights) reduces to the construction for each $p$-pair $\Pa$ of a morphism of complexes (preserving weights)
$$f_p:C^p(V_{w(\Pa)}[\deg(\Pa)])\to A$$
Fix a pair $\Pa=(\sigma^{k+1}\gamma_p\alpha,\sigma^k\phi_p\alpha)$ and a basis $(v_i)$ of $V[\deg(\Pa)]$. Denote by $(w_i)$ the same basis, considered as a basis of $V[\deg(\Pa)+1]$. So the complex $C^p(V_{w(\Pa)}[\deg(\Pa)])$ equals
$$V[\deg(\Pa)+1]\xrightarrow[]{d}V[\deg(\Pa)]\;,$$
with $d(w_i)=v_i$. Let $\overline{v_i}$ be the reduction modulo $p$ of $v_i$, considered as an element of $V\otimes_\Z\Fp[\deg(\sigma^{k}\phi_p\alpha)]$ which is the direct summand $H(V,n+2,\Fp)$ indexed by the $p$-admissible word $\sigma^{k}\phi_p\alpha$, cf. theorem \ref{thm-BnGamma-F}. This summand is also well defined for $p=2$ by remark \ref{rk-poids-nat}. The morphism of complexes $f_p$ is defined by Cartan as follows.
\begin{enumerate}
\item[(a)] 
The canonical map $A\to A\otimes_\Z\Fp$ is surjective so we can find an element 
$f_p(w_i)\in A$ whose reduction modulo $p$ is a cycle representing the homology class 
$\overline{v_i}\in H(V,n+2,\Fp)$. 
\item[(b)] The reduction mod $p$ of $f_p(w_i)$ is a cycle of $A\otimes_\Z\Fp$, so $d(f_p(w_i))$ is divisible by $p$. We define $f_p(v_i)$ by the equality $p f_p(v_i)= d(f_p(w_i))$.
\end{enumerate}
Take $A=\B^n \Gamma(V[2])$. The canonical map $A\to A\otimes_\Z\Fp$ preserves weights, so the weight of $f_p(w_i)$ is the same as the weight of $\overline{v}_i$, which equals $w(\Pa)$ by theorem \ref{thm-BnGamma-F}. Thus, the map $f_p$ preserves weights.

{\bf Case of $f_0$.} The morphism $f_0:X_0^{[n+2]}\to \B^n(\Gamma(V[2]))$ is induced by the morphism $V_1[n+2]\xrightarrow[]{=}\B^n(\Gamma(V[2]))_{n+2}$ and the universal property of $X_0^{[n+2]}=U(V_1[n+2])$. So $f_0$ preserves weights (it is actually a morphism of $\P_\Z$-dg-algebras).
\end{proof}

Theorem \ref{thm-BGamma-Z} yields an algorithm to compute the homology of $\B^n\Gamma(V[2])$. Indeed, the homology of the dual Koszul and De Rham algebras on a single generator can be computed by direct inspection.
\begin{enumerate}
\item[(i)] The homology of $K^h(\Z[2i-1])$ is $\Z$ in degree $0$ (and weight zero), $\Z/h\Z$ in degrees $d2i-1$ (and weight $d$) for $d\ge 0$, and  zero elsewhere.
\item[(ii)] The homology of $\Omega^h(\Z[2i])$ is $\Z$ in degree $0$ (and weight zero), $\Z/dh\Z$ in degrees $d2i$ (and weight $d$) for $d\ge 0$, and  zero elsewhere.
\end{enumerate}
With the help of the exponential isomorphisms $$K^h(V\oplus W)\simeq K^h(V)\otimes K^h(W)\;,\quad\text{ and }\quad\Omega^h(V\oplus W)\simeq \Omega^h(V)\otimes\Omega^h(W)\;,$$ and iterative uses of the K\"unneth formula, one recovers the homology of the wdg-$\Z$-algebras $X_p^{[n+2]}$. Theses algebras have an infinite number of generators, but only a (relatively small) finite number of generators play a role in the computation of a summand of the homology with given weight or degree. Let us give the computation of the homogeneous part of weight $4$ of the homology of $\B^n \Gamma(\Z[2])$ for $n=1$ and $n=2$. 
\begin{example}\label{ex-comput}
The homogeneous part of weight $4$ of the homology of $\B \Gamma(\Z[2])$ is given by the following table (it is zero outside the table).
 $$\begin{array}{c|ccc}
\text{degree} & 9 & 10 & 11 \\
\hline
\text{homology} & \Z/2\Z & \Z/3\Z & \Z/2\Z   
\end{array}$$
The homogeneous part of weight $4$ of the homology of $\B^2 \Gamma(\Z[2])$ is given by the following table (it is zero outside the table).
 $$\begin{array}{c|ccccccc}
\text{degree} & 10 & 11 & 12 & 13 & 14 & 15 & 16 \\
\hline
\text{homology} & \Z/2\Z & 0 & \Z/12\Z & \Z/2\Z & \Z/2\Z & 0 & \Z  
\end{array}$$

\end{example}

\subsection{The algebra of Koszul kernels}\label{subsec-KK}
When $n=1$, theorem \ref{thm-BGamma-Z} has a nicer formulation in terms of the algebra of Koszul kernels.

If $W$ is a graded $\Fp$-vector space concentrated in positive odd degrees, we denote by $K_\Fp(W)$ the dual Koszul algebra over $W$, that is, the graded $\Fp$-algebra $\Gamma_{\Fp}(W[1])\otimes_{\Fp}\Lambda_{\Fp}(W)$ (the index `$\Fp$' is put here to emphasize that we work in the realm of $\Fp$-vector spaces, in particular $\Gamma_{\Fp}(W[1])$ and $\Lambda_{\Fp}(W)$ equal $\Fp$ in degree zero) equipped with the Koszul differential, defined as the composite
\begin{align*}&\Gamma^n_{\Fp}(W[1])\otimes_{\Fp} \Lambda_{\Fp}^k(W)\xrightarrow[]{\Delta\otimes \Id}\Gamma^{n-1}_{\Fp}(W[1])\otimes_{\Fp} W[1]\otimes_{\Fp} \Lambda^k_{\Fp}(W)\\&\quad\xrightarrow[]{\Id} \Gamma^{n-1}_{\Fp}(W[1])\otimes_{\Fp} W\otimes_{\Fp} \Lambda^k_{\Fp}(W)\xrightarrow[]{\Id\otimes m} \Gamma^{n-1}_{\Fp}(W[1])\otimes_{\Fp} \Lambda^{k+1}_{\Fp}(W)\;. \end{align*} 
The dual Koszul algebra is a cdg-$\Fp$-algebra, it has weights if $W$ is equipped with weights, and it is a $\P_\Fp$-graded algebra if $W$ is a graded strict polynomial functor.

\begin{definition}
Let $W$ be a graded $\Fp$-vector space concentrated in positive odd degrees. The algebra of Koszul kernels $\KK_\Fp(W)$ is the commutative graded $\Z$-algebra defined by:
$\KK_\Fp(W)_0=\Z$ and in positive degrees $\KK_\Fp(W)$ equals the cycles of positive degree of the dual Koszul algebra $K_{\Fp}(W)$.
\end{definition}

\begin{lemme}\label{lm-KK}
Let $V$ be a graded $\Z$-module with weights, concentrated in odd degrees and $\Z$-free of finite rank in each degree. Put the weights on $V/pV$ such that the epimorphism $V\twoheadrightarrow V/pV$ preserves weights. There is an isomorphism of graded $\Z$-algebras with weights:
$$H(K^p(V))\simeq \KK_\Fp(V/pV)\;.$$
\end{lemme}
\begin{proof}
We have already proved in lemma \ref{lm-p-tors} that $H_0(K^p(V))\simeq \Z$ and that for positive $i$, $H_i(K^p(V))$ is isomorphic to the image of the differential of $K^1(V)$ tensored by $\Fp$. But $K^1(V)\otimes\Fp\simeq K_\Fp(V/pV)$ so the latter equals $\mathrm{Im}(d)$ where $d$ is the differential of $K_\Fp(V/pV)$. Finally, since $K_\Fp(V/pV)$ is exact in positive degrees (use the exponential formula and the K\"unneth isomorphism to reduce to the case when $V$ has rank one), $\mathrm{Im}(d)=\KK_\Fp(V/pV)$.
\end{proof}

Using lemma \ref{lm-KK}, the case $n=1$ of theorem \ref{thm-BGamma-Z} can be reformulated in the following way.
\begin{theoreme}\label{thm-BGamma-Z-bis}
Let $V[2]$ be a free $\Z$-module of finite rank concentrated in degree $2$ and weight one. The homology of $\overline{B}(\Gamma(V[2]))$ is isomorphic to the graded algebra with weights (where $(V/pV)^{(r)}[k]$ denotes a copy of $V$ with degree $k$ and weight $p^r$):
$$\Lambda(V[3])\otimes \bigotimes_{\text{$p$ prime}}\KK_{\Fp}\left(\bigoplus_{k\ge 0} (V/pV)^{(k+1)}[2p^{k+1}+1]\right)\;.$$
\end{theoreme}

Observe that the graded algebra with weights appearing in theorem \ref{thm-BGamma-Z-bis} is actually a graded $\P_\Z$-algebra. We do not claim in theorem \ref{thm-BGamma-Z-bis} that the isomorphism is an isomorphism of graded $\P_\Z$-algebras, but however we conjecture that this is the case, at least for the $p$-primary part when $p$ is odd. The proof of this (and further developments) is a work in progress with L. Breen and R. Mikhailov \cite{BMT}.

\subsection{Ext-computations over the integers}

By theorem \ref{thm-bar} the graded algebras with weights $\E(S,\Lambda)(\Z^m)$ and $\E(S,\Gamma)(\Z^m)$ are simply obtained from $\B^n(\Gamma(\Z^m[2]))$ by regrading. For instance, we obtain the following $\Ext$-computation from example \ref{ex-comput}.
\begin{example}\label{ex-comput2}
The groups $\Ext^i_{\P_\Z}(S^4,\Lambda^4)$ and $\Ext^i_{\P_\Z}(S^4,\Gamma^4)$ are given by the following tables (they are zero outside the table).
 $$\begin{array}{c|cccc}
\text{degree $i$} & 0 & 1 & 2 & 3 \\
\hline
\Ext^i_{\P_\Z}(S^4,\Lambda^4) & 0 & \Z/2\Z & \Z/3\Z & \Z/2\Z   
\end{array}$$

 $$\begin{array}{c|ccccccc}
\text{degree $i$} & 0 & 1 & 2 & 3 & 4 & 5 & 6 \\
\hline
\Ext^i_{\P_\Z}(S^4,\Gamma^4) & \Z & 0 & \Z/2\Z & \Z/2\Z & \Z/12\Z & 0 & \Z/2\Z  
\end{array}$$
\end{example}
 
We can also give general formulas by regrading theorems \ref{thm-BGamma-Z} and \ref{thm-BGamma-Z-bis}. 
If $W$ is a graded $\Fp$-module with weights concentrated in even degrees, we let 
$\widetilde{\KK}_{\Fp}(W)$ 
be the wg-$\Z$-algebra which equals $\Z$ in degree zero and which equals the subalgebra of cycles of the wdg-$\Z$-algebra $\,^\t\Gamma_\Fp(W[1])\otimes^1  \Lambda_\Fp(W)$ (equipped with the Koszul differential) in positive degrees. So $\widetilde{\KK}_{\Fp}(W)$ is equal to the algebra of Koszul kernels ${\KK}_{\Fp}(W)$ if $p=2$, and it differs from ${\KK}_{\Fp}(W)$ by signs in the multiplication if $p$ is odd. Moreover:
$$\widetilde{\KK}_{\Fp}(\R_3 V)=\R_3(\KK_\Fp(V))$$
when $V$ is a graded $\Fp$-module with weights concentrated in odd degrees and odd weights, or when $p=2$. Thus, theorem \ref{thm-BGamma-Z-bis} and theorem \ref{thm-bar} yield the following computation of $\E(S,\Lambda)(\Z^m)$.

\begin{theoreme}\label{thm-calculsanstwistIII}
Let $\Z^m$ be a free abelian group. There is an isomorphism of graded $\Z$-algebras with weights
$$\E(S,\Lambda)(\Z^m)\simeq \Lambda(\Z^m\langle 0\rangle)\otimes^1 {\bigotimes_{\text{$p$ prime}}}^1\;\widetilde{\KK}_{\Fp}\left(\bigoplus_{k\ge 0} (\Z^m/p\Z^m)^{(k+1)}\langle p^{k+1}-1\rangle\right)\;.$$
On the right hand side, $\Z^m\langle 0\rangle$ is a copy of $\Z^m$ having degree zero and weight $1$, and $(\Z^m/p\Z^m)^{(k)}\langle i\rangle$ denotes a copy of $\Z^m/p\Z^m$ having cohomological degree $i$ and weight $p^k$.
\end{theoreme}

Using theorems \ref{thm-BGamma-Z-bis} and theorem \ref{thm-bar}, we also obtain a computation of the graded $\Z$-algebras with weights $\E(S,\Gamma)(\Z^m)$.
\begin{theoreme}\label{thm-calculsanstwistIV}
Let $\Z^m$ be a free abelian group. There is an isomorphism of graded $\Z$-algebras with weights
$$\E(S,\Gamma)(\Z^m)\simeq \Gamma(\Z^m\langle 0\rangle)\otimes\bigotimes_{\text{$p$ prime}} \widehat{_pH}(\widetilde{X}_p).$$
On the right hand side, $\widetilde{X}_p$ is the wdg-$\Z$-algebra defined as the tensor product:
$$K^p\left(\bigoplus_{k,\ell\ge 0} {\Z^m}_{p^{k+\ell+2}}\langle 2p^{k+\ell +2}-2p^{k+1}-1\rangle\right)\otimes \Omega^p\left(\bigoplus_{\ell\ge 0} {\Z^m}_{p^{\ell+1}}\langle 2p^{k+1}-2\rangle\right),$$
where ${\Z^m}_k\langle i\rangle$ denotes a copy of ${\Z^m}$ having weight $k$ and cohomological degree $i$ (or equivalently homological degree $-i$).
\end{theoreme}

\part{Frobenius twists}\label{part-4}

In this part, $\k$ is a field of prime characteristic $p$. We compute the $\P_\k$-graded algebras $\E(X^{(r)},Y^{(s)})$, when $X$ and $Y$ are classical exponential functors and $r,s$ are nonnegative integers. Our method offers a unified treatment of all the cases. We proceed in several steps. 

In section \ref{sec-frob-bar}, we show that the $\P_\k$-graded algebras of the form $\E(X^{(t)},Y)$ are equal, up to a regrading, to the algebras $\E(X,Y)^{(t)}$. 

In section \ref{sec-tw-ss}, we use the twisting spectral sequence from \cite{TouzeTroesch}. In our case, it is proved in \cite{TouzeTroesch} that the spectral sequence collapses at the second page. As a consequence, we obtain that the $\P_\k$-graded algebras $\E(X^{(r)},Y^{(s)})$ can be easily computed (up to a filtration) from $\E(X^{(r-s)},Y)$.

To finish the computation, we need the filtrations on $\E(X^{(r)},Y^{(s)})$ to be trivial. This is the purpose of section \ref{sec-split}, which is of independent interest. We prove that for some filtered $\P_\k$-graded algebras $A$ with prescribed $\Gr A$, the filtration must split. 
The results are stated in section \ref{sec-final-results}. 

\section{D\'ecalages}\label{sec-frob-bar}

Let $A=\{A_{0,d}\}_{d\ge 0}$ be a $\P_\k$-algebra. Recall the external product:
$$\E^i(F,A_{0,d})\otimes \E^j(G,A_{0,e})\to \E^{i+j}(F\otimes G,A_{0,d+e})$$ 
induced by tensor products and the multiplication of $A$.
The following d\'ecalage formula was first obtained, by other means, in \cite[Prop 2.6]{Chalupnik2}.

\begin{proposition}\label{prop-twistgauche}
Let $\k$ be a field of prime characteristic $p$, let $F\in\P_{\k}$, and let $t$ be a nonnegative integer. For all integer $i$, there are isomorphisms of strict polynomial functors\footnote{In the statement, we take the convention that $\E^i(F,G)=0$ for $i<0$. To emphasize the analogy between the three cases, we have also written $\E^i(F,S^d)$ although these extension groups actually reduce to $\H(F,S^d)$ since symmetric powers are injective.}, natural in $F$:
\begin{align*}
&\E^i(F,S^d)^{(t)}\simeq \E^i(F^{(t)},S^{dp^t})\;, \\
&\E^{i}(F,\Lambda^d)^{(t)}\simeq \E^{i+(p^t-1)d}(F^{(t)},\Lambda^{dp^t})\;, \\
&\E^i(F,\Gamma^d)^{(t)}\simeq \E^{i+2(p^t-1)d}(F^{(t)},\Gamma^{dp^t}) \;.
\end{align*} 
Moreover, if $Y=S,\Lambda$ or $\Gamma$, the external product
$$\E^*(F,Y^d)^{(t)}\otimes \E^*(G,Y^e)^{(t)}
\to \E^{*}(F\otimes G,Y^{d+e})^{(t)}$$
identifies through the isomorphism above with the external product:
\begin{align*}\E^{*}(F^{(t)},Y^{dp^t})\otimes \E^{*}(G^{(t)},Y^{ep^t})\to \E^{*}(F^{(t)}\otimes G^{(t)},Y^{dp^t+ep^t})\;.
\end{align*}
\end{proposition}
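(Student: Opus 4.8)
The plan is to reduce everything to a single statement about bar complexes: the twisted symmetric, exterior and divided power algebras $S[i]^{(t)}$, $\Lambda[i]^{(t)}$, $\Gamma[i]^{(t)}$ are, after regrading, nothing but $S[\,?\,]$, $\Lambda[\,?\,]$, $\Gamma[\,?\,]$ again, by Lemma~\ref{lm-exe}. More precisely, I would proceed as follows. First, recall from Lemma~\ref{lm-sym-res} and section~\ref{subsubsec-bar-sym-ext} that $\E(F,Y^d)$ is computed by the homogeneous weight-$d$ part of the complex $\H(F,\mathcal{K})$ where $\mathcal{K}$ is a suitable regraded iterated bar construction of $S$ resolving $Y$ (namely $\mathcal{K}={}^\t\R_{2j+1}\B(S[2j])$ for $Y=\Lambda$, and $\mathcal{K}=\R_{2j+2}\B^2(S[2j])$ for $Y=\Gamma$, with the convolution product as in Proposition~\ref{prop-cplx-ext}; for $Y=S$ one takes $\mathcal{K}=S$ directly). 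Precomposing such a coresolution with the Frobenius twist $I^{(t)}$ yields a coresolution of $Y^{dp^t}$ by functors $S^{ep^t}=S^{e\,(t)}$, and since $\H(F^{(t)}, G^{(t)})$ is computed using precisely these twisted functors, one gets $\E^i(F^{(t)}, Y^{dp^t})$ as the homology (in the appropriate total degree) of $\H(F,\mathcal{K})^{(t)}$, the twist being applied degreewise to the complex of strict polynomial functors. This is where the degree shifts $(p^t-1)d$ and $2(p^t-1)d$ come from: precomposition by $I^{(t)}$ multiplies weights by $p^t$, so a functor sitting in homological degree $k$ and weight $e$ inside the resolution of $Y^d$ gets replaced, after the regrading $\R_\alpha$ that produces $\mathcal{K}$, by a functor whose degree is shifted; comparing the two regradings ($\R_{2j+1}$ resp.\ $\R_{2j+2}$ applied before vs.\ after the twist) one computes the shift to be $(2j+1-(2j+1))p^t\cdot\!$-type corrections, and Lemma~\ref{lm-exe} packages this bookkeeping exactly, giving $\R_\alpha(X[i]^{(t)})=X[i-\alpha p^t]$ (up to a $^\t$ when $\alpha,i$ both odd, which only affects signs, not the underlying functors).

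The second step is to make the identification of complexes precise and check it respects the convolution/external product. I would fix once and for all a coresolution $J_\bullet$ of $Y^d$ by (twisted) symmetric powers coming from the weight-$d$ part of the relevant regraded bar construction $\B^n(S[2j])$, and then precompose the \emph{whole} $\P_\k$-dg-algebra $\B^n(S[2j])$ with $I^{(t)}$. Since precomposition by an additive functor is exact and commutes with tensor products of strict polynomial functors, it transforms the $\P_\k$-dg-algebra $\B^n(S[2j])$ into $\B^n(S[2j])^{(t)}$, which by the computation rules of section~\ref{sec-regrading} and Lemma~\ref{lm-exe} is isomorphic (as $\P_\k$-dg-algebra, up to $^\t$) to a regrading of $\B^n(S[2j'])$ for a suitable $j'$. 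Under this isomorphism the multiplication $\widetilde m: J_\bullet\otimes J_\bullet\to J_\bullet$ lifting the product of $Y^*$ goes to the analogous multiplication on the untwisted side. Then the external product on $\E^*(F, Y^d)$, which by construction (see the proof of Proposition~\ref{prop-cplx-ext}) is induced on the cochain level by $\H(F,J_\bullet)\otimes\H(G,J_\bullet)\to \H(F\otimes G, J_\bullet\otimes J_\bullet)\xrightarrow{\H(\mathrm{id},\widetilde m)}\H(F\otimes G, J_\bullet)$, is transported to the external product for $\E^*(F^{(t)}, Y^{dp^t})$ simply because applying $(-)^{(t)}$ to this composite is visibly the corresponding composite on the twisted resolution. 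The naturality in $F$ is automatic since everything is induced by morphisms of the (fixed) complexes $J_\bullet$ and by functoriality of $\H(-,J_\bullet)$.

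The third, essentially cosmetic, step is to handle the degree conventions carefully so that the exponents in the three displayed formulas come out exactly as $i$, $i+(p^t-1)d$, $i+2(p^t-1)d$. For $Y=S$ there is no shift because $S^d$ is injective and the resolution has length zero. For $Y=\Lambda$ one uses that $\Lambda^d$ appears in $\R_{2j+1}\B(S[2j])$ in homological degree $d(2j+1)+(\text{const})$ and in weight $d$, so passing to the twist multiplies the weight by $p^t$ and, after re-regrading to land back in a bar construction of an untwisted symmetric algebra (via $\R_{-(2j+1)}\R_{2j+1}$ and Lemma~\ref{lm-RalphaR-alpha}), leaves a net homological shift of $(p^t-1)d$; the analogous count for $\B^2$ doubles this to $2(p^t-1)d$. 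The $^\t$ twists that appear in Lemma~\ref{lm-exe} and Lemma~\ref{lm-RalphaR-alpha} only modify the product by a sign depending on weights and do not change the underlying graded strict polynomial functor, so they do not affect the stated isomorphisms of functors (they would only matter if one tracked the full algebra structure, which the proposition does not claim beyond the external product compatibility, and there the signs match on both sides).

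\textbf{Main obstacle.} The genuine content is not the existence of the isomorphisms but the bookkeeping of degrees and signs: one must be scrupulous that the homological degree in which $\E^i(F^{(t)}, Y^{dp^t})$ is extracted from the twisted complex is exactly $i$ plus the claimed shift, and that the Koszul signs introduced by the regrading functors $\R_\alpha$ and by $^\t$ are consistent with the signs in the external product on both sides. I expect this sign/degree reconciliation — rather than any conceptual difficulty — to be the step requiring the most care, and it is where one should spell out the use of Lemma~\ref{lm-exe} and Lemma~\ref{lm-prop-facile} explicitly.
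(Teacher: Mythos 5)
There is a genuine gap, and also a couple of false intermediate assertions that you would have to repair.

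\textbf{False assertions.} You write that precomposing a coresolution of $Y^d$ by symmetric powers with $I^{(t)}$ ``yields a coresolution of $Y^{dp^t}$ by functors $S^{ep^t}=S^{e\,(t)}$.'' Both halves of this are wrong: $S^{e\,(t)}$ and $S^{ep^t}$ are genuinely different strict polynomial functors (the former is a proper direct summand of the latter for $t>0$, via the canonical inclusion), and precomposition gives a coresolution of $Y^{d\,(t)}$, not of $Y^{dp^t}$. Moreover the objects $S^{e\,(t)}$ are not injective in $\P_\k$, so this twisted complex cannot be used as an injective coresolution without further justification. Similarly, your claimed isomorphism of $\P_\k$-dg-algebras between $\B^n(S[2j])^{(t)}$ and a regrading of $\B^n(S[2j'])$ does not exist: Lemma~\ref{lm-exe} says $\R_\alpha(X[i]^{(r)})$ is $X[i-\alpha p^r]^{(r)}$ (up to~$^\t$), i.e.\ the Frobenius twist is preserved by regrading, never removed. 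The weight-$dp^t$ part of $\B^n(S[2j])^{(t)}$ is built from functors $S^{\mu\,(t)}$ with $\mu\vdash d$, which is a much smaller collection than the $S^\nu$ with $\nu\vdash dp^t$ appearing in any untwisted $\B^n(S[2j'])$.

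\textbf{The missing ingredient.} What actually makes the argument go through is the pair of facts, proved in \cite{TouzeTroesch} (Lemmas 2.2 and 2.3), that for any $G\in\P_\k$:
\begin{enumerate}
\item[(a)] the canonical inclusion $S^{\mu\,(t)}\hookrightarrow S^{p^t\mu}$ induces an isomorphism $\hom_{\P_\k}(G^{(t)},S^{\mu\,(t)})\xrightarrow{\;\simeq\;}\hom_{\P_\k}(G^{(t)},S^{p^t\mu})$;
\item[(b)] $\hom_{\P_\k}(G^{(t)},S^\lambda)=0$ whenever $\lambda$ is not of the form $p^t\mu$.
\end{enumerate}
The paper's proof uses these to show that the \emph{injection} $\B(S^{(t)})\hookrightarrow \B(S)$ (it is an injection of complexes, not an isomorphism) becomes an isomorphism of complexes after applying $\H(F^{(t)},-)$, and then identifies the left-hand side with $\H(F,\cdot)^{(t)}$ of the appropriately regraded bar construction. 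Without (a) and (b) you have no way to compare $\H(F^{(t)},-)$ on the twisted bar complex to $\E^*(F^{(t)},Y^{dp^t})$, which is what the statement asks for. So the ``sign/degree reconciliation'' you single out as the main obstacle is not actually the crux: the crux is this $\hom$-isomorphism and vanishing, which is genuine mathematical content, not bookkeeping, and it is entirely absent from your plan. You should add it as the first step; once it is in place the rest of your outline (bar constructions, $\R_\alpha$, Lemma~\ref{lm-exe}, compatibility of external products on the cochain level) does line up with the paper's argument.
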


\begin{proof}We will prove that the isomorphisms actually hold at the level of chain complexes. Since there are no extension groups between homogeneous functors of different weights, we can assume that $F$ is homogeneous of weight $d$.
Let us recall from \cite[Lemmas 2.2 and 2.3]{TouzeTroesch} an elementary computation in $\P_\k$ . If $\mu=(\mu_1,\dots,\mu_n)$ is a tuple of positive integers, we denote by $S^\mu$ the tensor product $S^{\mu_1}\otimes\dots\otimes S^{\mu_n}$, and by $\alpha\mu$ the tuple $\alpha\mu:=(\alpha\mu_1,\dots,\alpha\mu_n)$. For all $G\in\P_\k$, there are isomorphisms (the first one is induced by precomposition by $I^{(t)}$, the second one by the canonical inclusion $S^{\mu\,(t)}\hookrightarrow S^{p^t\mu}$):
\begin{align*}&\H(F,S^\mu)^{(t)}\simeq \H(F^{(t)},S^{\mu(t)})\;,& (i)\\
&\H(F^{(t)},S^{\mu(t)})\simeq \H(F^{(t)},S^{p^t\mu})\;,& (ii)\\
&\H(F^{(t)},S^{\lambda})\simeq 0\text{ if $\lambda$ is not of the form $p^t\mu$\;.}&(iii)\end{align*} 

Since $\E^*(F,S^d)^{(t)}=\H(F,S^d)^{(t)}$, the composition of the isomorphisms $(i)$ and $(ii)$ provides the required isomorphism for the case of $S^d$. The compatibility with external products is straightforward.

Now we turn to the case of $\E^{*}(F,\Lambda^d)^{(t)}$. We use bar constructions of symmetric algebras in the same fashion as in the proof of proposition \ref{prop-cplx-ext}. There a quasi-isomorphism of dg-$\P_\k$-algebras $\Lambda[1]\hookrightarrow \B S$, which becomes after regrading a quasi-isomorphism 
$\Lambda\hookrightarrow {}^t\R_{1}(\B S) $. The homogeneous part of weight $d$ of $^\t\R_{1}(\B S)$ is a coresolution of $\Lambda^d$ by symmetric powers, which we denote by $^\t\R_{1}I_d$. So $\E^i(F^{(t)},\Lambda^{dp^t})$ is the $(-i)$-th homology group of the complex $\H(F^{(t)},{}^\t\R_{1}I_{dp^t})$. We consider the morphism of complexes defined as the composition
\begin{align*}\H(F,{}^\t\R_{p^t}I_d)^{(t)}&\simeq \H(F^{(t)},(^\t\R_{p^t}I_d)^{(t)})&\\&= \H(F^{(t)},{}^\t\R_{1}(I_d^{(t)}))\to \H(F^{(t)}, {}^\t\R_{1}I_{dp^t})\;.& (iv)\end{align*}
where the first isomorphism is the isomorphism $(i)$, the equality comes from the equality $\R_{1}(\B S^{(t)})=(\R_{p^t}(\B S))^{(t)}$, and the last morphism is induced by the morphism of graded $\P_\k$-algebras $S^{(t)}\hookrightarrow S$ (which maps $S^{d(t)}$ into $S^{dp^t}$), which induces a morphism $\B S^{(t)}\hookrightarrow \B S$, hence a morphism ${}^\t\R_{1}(I_{d}^{(t)})\to {}^\t\R_{1}I_{dp^t}$. We claim that the last map in the composition $(iv)$ is an isomorphism of complexes. Indeed, the objects of the complex ${}^\t\R_{1}I_{dp^t}$ are symmetric tensors $S^\lambda$. If $\lambda=p^t\mu$, then  there is a summand $S^{\mu\,(t)}$ of $(^\t\R_{-1}I_d)^{(t)}$ such that the restriction of the map ${}^\t\R_{-1}(I_{d}^{(t)})\to {}^\t\R_{-1}I_{dp^t}$ to $S^{\mu\,(t)}$ is the canonical inclusion $S^{\mu\,(t)}\hookrightarrow S^{p^t\mu}$. This canonical inclusion induces an isomorphism after applying $\H(F^{(t)},-)$, by $(v)$. If $\lambda$ is not of the form $p^t\mu$, then $\H(F^{(t)},S^{\lambda})\simeq 0$ by  $(iii)$. Hence the composition $(iv)$ is an isomorphism of complexes. Taking the $(-i)$-th homology group, we obtain an isomorphism 
$$\E^i(F^{(t)}, \Lambda^{dp^t})\simeq\E^{i-d(p^t-1)}(F,\Lambda^d)^{(t)}\;.$$ 
It remains to check the compatibility with external products. This is a straightforward check at the level of cochain complexes, similar to the identification of the convolution product in the proof proposition \ref{prop-cplx-ext}. The case of $\E^{*}(F,\Gamma^d)$ is similar.
\end{proof}

If $C$ is a $\P_\k$-coalgebra, and $Y$ is a classical exponential functor, the convolution product of $\E(C,Y^{(t)})$ is obtained by combining the external product and the map induced by the comultiplication of $C$. So proposition \ref{prop-twistgauche} implies the following result, which we formulate using the regrading functor $\R_\alpha$ from section \ref{sec-regrading}.

\begin{corollary}\label{cor-decalage} Let $C$ be a $\P_\k$-coalgebra and let $Y$ be a classical exponential functor. For all $t\ge0$, there is an isomorphism $\P_\k$-graded algebras:
$$\left(\R_{\alpha(Y)}\E(C,Y)\right)^{(t)}\simeq \E(C^{(t)},Y)\;,$$
with $\alpha(S)=0$, $\alpha(\Lambda)=p^t-1$ and $\alpha(\Gamma)=2(p^t-1)$. 
\end{corollary}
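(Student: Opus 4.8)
The plan is to obtain the corollary essentially for free from Proposition \ref{prop-twistgauche}, which carries all the real content; what is left is to assemble the pointwise isomorphisms into an isomorphism of $\P_\k$-graded algebras and to keep track of degrees, weights and signs.

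First I would identify the two sides as graded strict polynomial functors. Since $C^{(t)}=C\circ I^{(t)}$, the homogeneous part of $C^{(t)}$ of weight $e$ vanishes unless $e=dp^t$, in which case it is $(C_{0,d})^{(t)}$; hence the homogeneous part of $\E(C^{(t)},Y)$ in cohomological degree $m$ and weight $dp^t$ is $\E^m((C_{0,d})^{(t)},Y^{dp^t})$. Working out the definition of $\R_{\alpha(Y)}$ in the same way, the homogeneous part of $(\R_{\alpha(Y)}\E(C,Y))^{(t)}$ in cohomological degree $m$ and weight $dp^t$ is $\E^{m-\alpha(Y)d}(C_{0,d},Y^d)^{(t)}$. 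Proposition \ref{prop-twistgauche} provides, naturally in $F\in\P_\k$, isomorphisms $\E^{j}(F,Y^d)^{(t)}\simeq\E^{j+\alpha(Y)d}(F^{(t)},Y^{dp^t})$ with $\alpha(S)=0$, $\alpha(\Lambda)=p^t-1$, $\alpha(\Gamma)=2(p^t-1)$; applying this with $F=C_{0,d}$ and $j=m-\alpha(Y)d$ matches the two homogeneous parts, so these isomorphisms assemble into an isomorphism $\Theta$ of graded strict polynomial functors.

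Next I would verify that $\Theta$ is multiplicative. Recall that the convolution product on $\E(-,Y)$ is the external product of extensions (which uses the multiplication of $Y$) followed by the map induced by the comultiplication of the coalgebra variable, and that the comultiplication of $C^{(t)}$ is the Frobenius twist $\Delta^{(t)}$ of that of $C$. So the product on $(\R_{\alpha(Y)}\E(C,Y))^{(t)}$ is, up to the regrading sign built into $\R_{\alpha(Y)}$, the Frobenius twist of the external product of $\E(C,Y)$ followed by the twist of $\E(\Delta_{d,e},\Id)$. The last assertion of Proposition \ref{prop-twistgauche} says precisely that the twist of the external product corresponds, through the above isomorphisms, to the external product of the twisted functors; and twisting $\E(\Delta_{d,e},\Id)$ gives $\E((\Delta_{d,e})^{(t)},\Id)$, which is exactly the map entering the convolution product on $\E(C^{(t)},Y)$. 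Hence $\Theta$ intertwines the two products, up to the regrading sign.

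Finally I would dispose of that sign. A short parity check from the formula defining $\R_\alpha$ shows its product sign is $+1$ whenever $\alpha$ is even: here $\alpha(S)=0$ and $\alpha(\Gamma)=2(p^t-1)$ are even, $\alpha(\Lambda)=p^t-1$ is even when $p$ is odd, and for $p=2$ every sign is trivial. So the regrading sign never intervenes, and since $\Theta$ visibly respects units it is an isomorphism of $\P_\k$-graded algebras. I expect the only delicate step to be exactly this bookkeeping — reconciling the shifted cohomological degree of the external product in Proposition \ref{prop-twistgauche} with the shift absorbed by $\R_{\alpha(Y)}$, and checking that the leftover sign vanishes — the rest being a formal consequence of the definition of the convolution product.
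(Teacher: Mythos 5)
Your proposal is correct and follows the same route the paper has in mind: the paper simply says the corollary is ``a straightforward consequence of Proposition~\ref{prop-twistgauche}'', and your argument is exactly the bookkeeping that makes that precise (matching the homogeneous parts via $\R_{\alpha(Y)}$, using the naturality in $F$ to handle the map induced by the comultiplication, using the last clause of the proposition for the external product, and observing that the $\R_\alpha$ product sign is trivial because $\alpha(Y)$ is even except when $p=2$). Nothing to add.
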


For the convenience of the reader, we write out explicitly two cases of application of corollary \ref{cor-decalage}.

\begin{example}\label{ex-1ststep}
{\bf (1)} The divided powers $\Gamma^d$ are projective strict polynomial functors, and the Yoneda lemma yields an isomorphism of $\P_\k$-algebras:
$$\E(\Gamma,\Lambda)=\H(\Gamma,\Lambda)\simeq \Lambda\;.$$
Let $\Lambda\langle i\rangle^{(t)}$ denotes the $\P_\k$-graded algebra with $\Lambda^{d\,(t)}$ in cohomological degree $i$ and weight $dp^t$ (hence $\Lambda\langle i\rangle^{(t)}=\Lambda[-i]^{(t)}$). Corollary \ref{cor-decalage} implies:
$$\E(\Gamma^{(t)},\Lambda)\simeq \Lambda\langle p^t-1\rangle^{(t)}\;.$$

\noindent
{\bf (2)} Assume that $p=2$. Let $I^{(k)}\langle i\rangle$ denote a copy of the $k$-th Frobenius twist functor, placed in cohomological degree $i$ (hence $I^{(k)}\langle i\rangle=I^{(k)}[-i]$). We computed $E(S,\Lambda)$ in theorem \ref{thm-calculsanstwistII}, so corollary \ref{cor-decalage} implies:
$$\E(S^{(t)},\Lambda)\simeq \Gamma\big(\textstyle\bigoplus_{k\ge 0}I^{(k+t)}\langle p^{k+t}-1\rangle)\;.$$
\end{example}

\section{The twisting spectral sequence and Troesch complexes}\label{sec-tw-ss}

Let $r,s$ be nonnegative integers, and assume that $t:=r-s$ is nonnegative. In this section we explain how to recover, up to a filtration, the $\P_\k$-graded algebra $\E(X^{(r)},Y^{(s)})$ from the $\P_\k$-graded algebra $\E(X^{(t)},Y)$. This is a simple application of the results proved in \cite{TouzeTroesch}, but for the reader's convenience we recall here the results we need for our computation.

\subsection{Parameterization by graded vector spaces}\label{subsec-param-graded}

Let $V$ be a finite dimensional graded vector space. The parameterized functor $F_V:W\mapsto F(V\otimes W)$ actually carries a grading, defined in the following way. Let the multiplicative group $\mathbb{G}_m$ act rationally on a homogeneous vector $v\in V$ of degree $i$ by the formula $\lambda\cdot v:=\lambda^i v$. Since $F$ is a strict polynomial functor, this induces a rational action of $\mathbb{G}_m$ on $F_V(W)$ for all $W\in\V_\k$. Thus, $F_V$ splits as a direct sum $F_V=\bigoplus_{i\in\Z}(F_V)_i$, where $(F_V)_i(W)$ is the subspace of weight $i$ of $F_V(W)$ under the action of $\mathbb{G}_m$.

Thus, parameterization by a graded vector space $V$ defines a weight-preserving functor ($\P_\k^*$ is the category of graded strict polynomial functors): 
$$\begin{array}{ccc}\P_\k&\to &\P_\k^*\\
F&\mapsto & F_V 
\end{array}.$$

\begin{example}
If $F$ is a symmetric power, an exterior power, a divided power or a tensor product of these, the notion of degree defined on $F_V(W)$ coincides with the usual definition. For example, if $V$ has a homogeneous basis formed by $e_{-1}$ of degree $-1$ and $e_4$ of degree $4$, and if $w_i$ denote elements of $W$, the element $(e_{-1}\otimes w_1)(e_4\otimes w_2)\otimes (e_{-1}\otimes w_3)$ is an element of degree $-1+4-1=2$ of $(S^2\otimes S^1)_V(W)$. 
\end{example}

The following example will also be important for our computations.
\begin{example}
If $F=I^{(r)}$ and $V=\oplus V_i$ then $(I^{(r)})_V=\oplus V_i^{(r)}\otimes I^{(r)}$, where the summand $V_i^{(r)}\otimes I^{(r)}$ is homogeneous of degree $p^ri$. In particular, the functors $(I_V)^{(r)}$ and $(I^{(r)})_V$ do not bear the same gradings, although they are isomorphic in an ungraded way.
\end{example}

For our purposes, we need to define parameterization of $\P_\k$-graded algebras by graded vector spaces. Let $A$ be a $\P_\k$-graded algebra and denote by $A_{i,d}$ the homogeneous part of $A$ of degree $i$ and weight $d$. The parameterized functor $(A_{i,d})_V$ has a weight and a bigrading. To be more specific, the first partial degree is the degree $i$ coming from the grading of $A$, and the second partial degree is induced by the grading of $V$. To get rid of bidegrees, we define the degree on $(A_{i,d})_V$ as the total degree associated to the bidegree. In this way, parameterization by a graded vector space $V$ defines a functor:
$$\begin{array}{ccc}\{\P_\k\text{-g-alg}\}&\to &\{\P_\k\text{-g-alg}\}\\
A&\mapsto & A_V 
\end{array}. $$
\begin{example}
Let $V$ be a graded vector space with homogeneous basis $(e_2,e_4)$ with $e_i$ of degree $i$. Then $S[16]_V= S[18]\otimes S[20]$.
\end{example}

\subsection{The twisting spectral sequence}

Let us denote by $E_s$ the graded vector space $\Ext_{\P_\k}^*(I^{(s)},I^{(s)})$. It is one dimensional in cohomological degrees $2i$, for $0\le i<p^s$ and zero everywhere else, see e.g. \cite[Thm 4.5]{FS} or \cite[Cor 4.7]{TouzeTroesch}. If $G\in\P_\k$, the parameterized functor $G_{E_s}$ inherits from $E_s$ a cohomological grading, as explained in section \ref{subsec-param-graded}. We denote by $(G_{E_s})^j$ its homogeneous part of cohomological\footnote{As usual, we can convert cohomological degrees into homological degrees by the formula $M^i=M_{-i}$.} degree $j$.

Recall from \cite[Thm 7.1]{TouzeTroesch} that for strict polynomial functors $F,G$ with finite dimensional values and for $s\ge 0$, there is a `twisting spectral sequence', natural with respect to $F$ and $G$:
$$E^{i,j}(F,G,s):= \Ext_{\P_\k}^{i}(F,(G_{E_s})^j)\Longrightarrow \Ext^{i+j}_{\P_\k}(F^{(s)}, G^{(s)})\,.$$
Let us put the parameterized strict polynomial functor $G_{V^{(s)}}$ instead of $G$ in the spectral sequence above. Since we have 
$$ (G_{V^{(s)}})^{(s)} = (G^{(s)})_V\;\text{ and }  (G_{V^{(s)}})_{E_s}= G_{V^{(s)}\otimes E_s}\;,$$
the spectral sequence now takes the form:
$$E^{i,j}(F,G_{V^{(s)}},s)=(\E^i(F,G)_{E_s})^{(s)\,j}\Longrightarrow \E^{i+j}(F^{(s)}, G^{(s)})\;,$$
where for all $i\ge 0$, $(\E^i(F,G)_{E_s})^{(s)\,j}$ denotes the homogeneous part of cohomological degree $j$ of the graded strict polynomial functor $(\E^i(F,G)_{E_s})^{(s)}$.

If $G$ is a symmetric power, an exterior power, a divided power or a tensor product of these, it is proved in \cite[Thm 8.11]{TouzeTroesch} that the spectral sequence collapses at the second page. 

Finally, it is proved in \cite[Thm 7.1]{TouzeTroesch} that the twisting spectral is compatible with tensor products and natural with respect to $F$ and $G$. Thus, if $X$ and $Y$ are classical exponential functors and if $t\ge 0$, we can retrieve (up to a filtration) the convolution product of the abutment $\E(X^{(s+t)}, Y^{(s)})$ from the convolution product of the second page $(\E(X^{(t)},Y)_{E_s})^{(s)}$. The following statement sums up our discussion.

\begin{theoreme}\label{thm-twisting-ss}
Let $\k$ be a field of positive characteristic, let $X,Y$ be classical exponential functors, and let $t$ and $s$ be nonnegative integers. Then $\E(X^{(t+s)},Y^{(s)})$ is a filtered $\P_\k$-graded algebra and there is an isomorphism of $\P_\k$-graded algebras:
$$ (\E(X^{(t)},Y)_{E_s})^{(s)}\simeq \Gr\left( \E(X^{(t+s)},Y^{(s)}) \right)\;.$$
\end{theoreme}

The algebra appearing on the left hand side of the isomorphism of theorem \ref{thm-twisting-ss} is easy to compute with the help of corollary \ref{cor-decalage}. 

\begin{example}\label{ex-calc-twist}{\bf (1)} Let $I^{(k)}\langle i\rangle$ denote a copy of the $k$-th Frobenius twist, with cohomological degree $i$ (hence $I^{(k)}\langle i\rangle=I^{(k)}[-i]$). There is, up to a filtration, an isomorphism of $\P_\k$-graded algebras:
$$\E(\Gamma^{(t+s)}, \Lambda^{(s)})\simeq \Lambda\left(\textstyle\bigoplus_{0\le i<p^s} I^{(t+s)}\langle (2i+1)p^t-1\rangle\right)\;.$$

\noindent
{\bf (2)} Assume that $p=2$. There is, up to a filtration, an isomorphism of $\P_\k$-graded algebras:
$$\E(S^{(t+s)},\Lambda^{(s)})\simeq \Gamma\left(
\textstyle\bigoplus_{0\le i<p^s}\bigoplus_{k\ge 0} I^{(k+t+s)}\langle(2i+1)p^{k+t}-1\rangle
\right)\;.$$ 
\end{example}

\section{Splitting filtrations}\label{sec-split}
This section is of independent interest: we prove general splitting results for filtrations of $\P_\k$-graded algebras. We will use these results in section \ref{sec-final-results} to prove that the filtrations appearing in theorem \ref{thm-twisting-ss} split.

\subsection{General facts on filtrations}
We begin with elementary observations about filtered functors.
\begin{lemme}\label{lm-filtr-str}
Let $\k$ be a field and let $F$ be a filtered strict polynomial functor over $\k$ with finite dimensional values.
\begin{enumerate}
\item[(i)] The filtration of $F$ has finite length.
\item[(ii)] If $\Ext^1_{\P_\k}(\Gr F,\Gr F)=0$, there is an isomorphism $F\simeq \Gr F$.
\end{enumerate}
\end{lemme}
\begin{proof}
To prove (i), observe that a filtration of $F$ is equivalent to a filtration of the $S(\k^d,d)$-module $F(\k^d)$ where $d$ is the strict polynomial degree of $F$. Hence the filtration is finite for dimension reasons. 
Let us prove (ii). Let $F_1\subset F_2\subset \dots\subset F_n=F$ be the filtration of $F$. If $\Ext^1_{\P_\k}(\Gr F,\Gr F)$ equals zero, then $\Ext^1_{\P_\k}(F_1,F_2/F_1)$ equals zero. Hence the extension $F_1\hookrightarrow F_2\twoheadrightarrow F_2/F_1$ splits, that is $F_2\simeq F_1\oplus F_2/F_1$. In this way we build inductively an isomorphism $F\simeq \Gr F$. 
\end{proof}

Without further indication, filtrations of $\P_\k$-graded algebras will always mean \emph{bounded} filtrations (that is, on each homogeneous component $A_{i,d}$ of the $\P_\k$-graded algebra $A$, the filtration has finite length). For example, filtrations of exponential functors are always bounded (since the $E_{i,d}$ take finite dimensional values, this follows from lemma \ref{lm-filtr-str}). Also, filtered $\P_k$-graded algebras appearing as abutments of first quadrant spectral sequences like the twisting spectral sequence from section \ref{sec-tw-ss} have bounded filtrations.
In our study of filtered $\P_\k$-graded algebras, we shall use $(1,\epsilon)$-commutativity defined in section \ref{subsec-convol}. The following lemma is an easy check.

\begin{lemme}\label{lm-1E-comm}
Let $A$ be a filtered $\P_\k$-graded algebra. If $A$ is $(1,\epsilon)$-commutative, so is $\Gr A$.
\end{lemme}

Finally, we observe that the exponential property behaves well with filtrations of algebras.

\begin{lemme}\label{lm-exp-gr}
Let $\k$ be a field and let $A$ be a filtered $\P_\k$-graded algebra.
If $\Gr A$ is a graded exponential functor, then so is $A$. 
\end{lemme} 
\begin{proof}
Consider $A(V)\otimes A(W)$ as a weighted graded $\k$-module with the tensor product filtration 
$$F^i \big(A(V)\otimes A(W)\big)=\textstyle\sum_{k+\ell=i} F^kA(V)\otimes F^\ell A(W)\;.$$ 
The multiplication induces a map of weighted graded $\k$-modules
$A(V)\otimes A(W)\xrightarrow[]{\mathrm{m}} A(V\oplus W)$
compatible with the filtrations. Since $\Gr A$ is exponential, the associated graded map $\mathrm{Gr} m$ is an isomorphism on each homogeneous part of degree $i$ and weight $k$.
The filtrations of the homogeneous part of degree $i$ and weight $d$ has finite length. So by iterated uses of the five lemma, the map $m:(A(V)\otimes A(W))_{i,k}\to A(V\oplus W)_{i,k}$ is also an isomorphism. Thus $A$ is exponential.
\end{proof}

\subsection{Splitting results}
\subsubsection{The odd characteristic case}

\begin{lemme}\label{lm-ext}
Let $\k$ be a field of odd characteristic. Let $m,n$ be positive integers, let $\lambda$ and $\alpha$ be $m$-tuples of nonnegative integers and let $\mu$ and $\beta$ are $n$-tuples of nonnegative integers. Let $G$ be a functor of the form $ G= \left(\textstyle\bigotimes_{i=1}^m S^{\lambda_i\, (\alpha_i)}\right)\otimes \left(\textstyle\bigotimes_{i=1}^n \Lambda^{\mu_i\, (\beta_i)}\right)$. Then $\Ext^1_{\P_\k}(G,G)=0$.
\end{lemme}
\begin{proof} By iterated uses of lemma \ref{lm-facile-Ext} (or use \cite[Cor 1.8]{FFSS}), it suffices to check that $\Ext^1(X,Y)=0$ if $X$ and $Y$ are of the form $S^{\ell \,(r)}$ or $\Lambda^{m\,(s)}$.

We first check that it is true if the functors are not twisted:
$\Ext^1(S^d,S^d)=\Ext^1(\Lambda^d,S^d)=0$ by injectivity of $S^d$, $\Ext^1(\Lambda^d,\Lambda^d)=0$ by remark \ref{rk-Ext-Lambda-Lambda}, and $\Ext^1(S^d,\Lambda^d)=0$ by theorem \ref{thm-calculsanstwistI}. By proposition \ref{prop-twistgauche}, $\Ext^1$ also vanish if one of the two functors $X,Y$ is twisted (indeed, the $\Ext$-degree is shifted by an even integer since $p$ is odd). Finally theorem \ref{thm-twisting-ss} shows the vanishing of $\Ext^1(X,Y)$ in the general case (indeed, if $A$ is a negatively graded $\P_\k$-algebra which is zero in degree $-1$, then so is $A_{E_s}$).
\end{proof}

\begin{proposition}[Splitting result I]\label{prop-trivFp_1}
Let $\k$ be a field of odd characteristic, and let $A$ be a filtered $\P_\k$-graded algebra. Assume that $A$ is graded commutative and that $\mathrm{Gr} A$ has one of the following form:
$$(i)\; S(F)\otimes \Lambda(G)\;,\quad (ii)\; \Gamma(F)\otimes \Lambda(G)\;, $$
where $F$ and $G$ are graded strict polynomial functors which are additive.
Then there is an isomorphism of $\P_\k$-graded algebras $A\simeq \mathrm{Gr}A$.
\end{proposition}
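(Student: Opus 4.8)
The strategy is to reduce the splitting statement to a cohomological vanishing, namely $\Ext^1_{\P_\k}(\Gr A, \Gr A) = 0$, and then to invoke the standard argument that a bounded filtration on a functor with vanishing self-$\Ext^1$ splits (this is exactly Lemma \ref{lm-filtr-str}(ii), applied weight-by-weight). So the heart of the matter is to establish that the graded strict polynomial functor $\Gr A$, which by hypothesis is of the form $S(F)\otimes\Lambda(G)$ or $\Gamma(F)\otimes\Lambda(G)$ with $F$, $G$ additive, has no self-extensions in degree $1$.

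First I would decompose $F$ and $G$ into their homogeneous summands using the classification of additive functors over a field of prime characteristic (Lemma \ref{lm-classif-add}): each homogeneous piece of $F$ and of $G$ is a finite direct sum of Frobenius twists $I^{(r)}$. Hence each homogeneous component of $\Gr A$ (in a fixed homological degree and weight) is a finite direct sum of functors of the form $\bigl(\bigotimes_i S^{\lambda_i(\alpha_i)}\bigr)\otimes\bigl(\bigotimes_j \Lambda^{\mu_j(\beta_j)}\bigr)$ in case $(i)$ — and in case $(ii)$ the $S^{\lambda_i(\alpha_i)}$ are replaced by $\Gamma^{\lambda_i(\alpha_i)}$, but by Kuhn duality $\Ext^1_{\P_\k}(\Gamma^{d(r)},\Gamma^{d(r)})\simeq\Ext^1_{\P_\k}(S^{d(r)},S^{d(r)})$, and more generally the self-$\Ext^1$ of a divided-power version is identified with that of the symmetric-power version by $^\sharp$ (Proposition \ref{prop-strpolstruct}). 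So case $(ii)$ reduces to case $(i)$. Since there are no extensions between homogeneous functors of different weights or different homological degrees, $\Ext^1_{\P_\k}(\Gr A,\Gr A)$ is a finite direct sum of groups $\Ext^1_{\P_\k}(G_1,G_2)$ with $G_1,G_2$ tensor products of twisted symmetric and exterior powers; these vanish by Lemma \ref{lm-ext}. This gives $\Ext^1_{\P_\k}(\Gr A,\Gr A)=0$.

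With the vanishing in hand, I would run the splitting argument: fix a weight $d$ and homological degree $i$; the homogeneous piece $A_{i,d}$ is a filtered strict polynomial functor with finite-dimensional values, hence (Lemma \ref{lm-filtr-str}(i)) has a filtration of finite length $A_{i,d}^{(1)}\subset\cdots\subset A_{i,d}^{(n)}=A_{i,d}$ with subquotients summands of $(\Gr A)_{i,d}$. Since $\Ext^1_{\P_\k}(\Gr A,\Gr A)=0$, each extension $A_{i,d}^{(k)}\hookrightarrow A_{i,d}^{(k+1)}\twoheadrightarrow A_{i,d}^{(k+1)}/A_{i,d}^{(k)}$ splits, yielding an isomorphism of strict polynomial functors $\phi:\Gr A\xrightarrow{\simeq} A$. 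The remaining — and genuinely the most delicate — point is to upgrade $\phi$ to an \emph{isomorphism of $\P_\k$-graded algebras}: the choices of splittings are not canonical, so one must correct $\phi$ so that it respects multiplications. Here I would argue as in the classical theory of filtered algebras with vanishing obstruction groups: the obstruction to multiplicativity of a chosen $\phi$ lands in a group built from $\Ext^{1}$-type data between the associated graded pieces (one compares $\phi(xy)$ and $\phi(x)\phi(y)$, both lifting the same element of $\Gr A$, so their difference is measured by the lower filtration step), and the same vanishing $\Ext^1_{\P_\k}(\Gr A,\Gr A)=0$ lets one successively adjust $\phi$ filtration-step by filtration-step — using that $\Gr A$ is a free (graded-)commutative algebra on $F$ (or $F\oplus G$), so a multiplicative map out of it is determined by its restriction to the generators $F$, $G$, which lets us choose the correction on generators and extend. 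The graded commutativity hypothesis on $A$ (and hence on $\Gr A$, by Lemma \ref{lm-1E-comm}) is what guarantees $\Gr A$ really is the free graded-commutative algebra, so that this generator-by-generator correction is legitimate. I expect this last multiplicativity bookkeeping — tracking that the correction terms live in the right vanishing groups and that the induction on filtration length closes — to be the main obstacle; the cohomological vanishing itself is routine given Lemmas \ref{lm-classif-add}, \ref{lm-ext} and \ref{lm-facile-Ext}.
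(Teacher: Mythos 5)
Your reduction of the problem to the vanishing $\Ext^1_{\P_\k}(\Gr A,\Gr A)=0$, and the proof of that vanishing via Lemma~\ref{lm-classif-add}, Kuhn duality (Lemma~\ref{lm-sym-res}) and Lemma~\ref{lm-ext}, match the paper's Step~1. But your strategy for upgrading the resulting functorial splitting to an \emph{algebra} isomorphism has a genuine gap, and it hits precisely the case the paper singles out as ``slightly more difficult.''

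You assert that ``$\Gr A$ is a free (graded-)commutative algebra on $F$ (or $F\oplus G$), so a multiplicative map out of it is determined by its restriction to the generators.'' That is true in case $(i)$, where $\Gr A = S(F)\otimes\Lambda(G)$ with $F$ in even degrees and $G$ in odd degrees; there the universal property of the free graded-commutative algebra hands you a multiplicative extension of a section $F\oplus G\hookrightarrow A$ with no obstruction bookkeeping needed. But in case $(ii)$, $\Gr A=\Gamma(F)\otimes\Lambda(G)$ is \emph{not} free as a graded-commutative algebra (the divided power algebra is free as a \emph{divided power} algebra, and cofree as a coalgebra, but not free as a commutative algebra), so the free-algebra universal property simply does not apply. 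Your Kuhn-duality reduction of case $(ii)$ to case $(i)$ only addresses the $\Ext^1$-vanishing; it does not reduce the algebra-splitting problem, since $^\sharp$ turns a filtered $\P_\k$-graded \emph{algebra} into a filtered $\P_\k$-graded \emph{coalgebra}, and you would still need an argument on that side. The paper's actual route for case $(ii)$ is different: it first uses additivity of $F,G$ plus Lemma~\ref{lm-exp-gr} to see that $A$ itself is an \emph{exponential} functor, hence carries a compatible coalgebra structure; it then invokes the universal property of $\Gamma(F)\otimes\Lambda(G)$ as the \emph{cofree} graded-commutative \emph{coalgebra} on $F\oplus G$ to produce a coalgebra map $\psi:A\to\Gamma(F)\otimes\Lambda(G)$ with $\Gr\psi=\mathrm{id}$, deduces $\psi$ is an isomorphism by the filtration argument, and finally applies Lemma~\ref{lm-morph} (for exponential functors, being a coalgebra morphism is equivalent to being an algebra morphism) to conclude $\psi$ is an algebra isomorphism. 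None of these three ingredients --- exponentiality of $A$, the cofree-coalgebra universal property, and Lemma~\ref{lm-morph} --- appears in your proposal, and without them case $(ii)$ is not closed. As a secondary point, your ``obstruction-theoretic'' step-by-step correction of $\phi$ is both unnecessary (in case $(i)$ the free-algebra universal property gives multiplicativity outright) and not actually justified by $\Ext^1_{\P_\k}(\Gr A,\Gr A)=0$: that vanishing controls splittings of the filtration as strict polynomial functors, not corrections to multiplicativity, which would live in a different (Hochschild-type) cohomology group.
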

\begin{proof}
The proof relies essentially on lemma \ref{lm-ext} and the use of universal properties of $\mathrm{Gr}A$. We prove case $(ii)$, which is slightly more difficult.

{\bf Step 1: splitting without products.}
For all $i,d$, the extension group $\Ext^1_{\P_\k}(\mathrm{Gr} A_{i,d}, \mathrm{Gr} A_{i,d})$ equals $\Ext^1_{\P_\k}(\mathrm{Gr} A_{i,d}^\sharp, \mathrm{Gr} A_{i,d}^\sharp)$ by lemma \ref{lm-sym-res}, which equals zero by lemma \ref{lm-ext}. So we get an isomorphism of graded strict polynomial functors $A\simeq \mathrm{Gr}A$, whence a surjective map: $\phi:A\twoheadrightarrow F\oplus G$. There is an extra grading on $\Gr A$, namely the \emph{filtration degree} (that is $Gr_k A$ is the homogeneous part of $\Gr A$ of filtration degree $k$). If we define a filtration on $\Gr A$ (hence on $F\oplus G$) by $\F_k \Gr A=\bigoplus_{i\le k}\Gr_i A$, then $\phi$ preserves the filtrations, and $\Gr\phi$ equals the canonical subjection $\Gr A\twoheadrightarrow F\oplus G$. 

{\bf Step 2: coalgebra structures and universal property.}
Since $F$ and $G$ are additive, $\Gr A$ is a graded exponential functor. Thus, by lemma \ref{lm-exp-gr}, $A$ is a graded exponential functor. In particular, $A$ also has a coalgebra structure. Since the filtration of $A$ is natural and compatible with products, $A$ is a filtered coalgebra, and $\Gr(A)=\Gamma(F)\otimes^{\epsilon} \Lambda(G)$ as a coalgebra.

Now $\Gr A$ is a graded commutative algebra. Hence $\Gamma(F)$ and $\Lambda(G)$ are graded commutative. Hence $F$ (resp. $G$) is concentrated in even (resp. odd) degrees. Thus, $\Gamma(F)\otimes \Lambda(G)$ is the universal cofree graded commutative coalgebra on $F\oplus G$. Thus, by the universal property of the cofree algebra (see e.g. \cite[Chap. 1]{LodayValette}) $\phi$ induces a map $\psi:A\to \Gamma(F)\otimes \Lambda(G)$. 

{\bf Step 3: conclusion}
Let us consider the trivial filtration on $\Gamma(F)\otimes \Lambda(G)$ induced by the filtration $\F_k(F\otimes G)$ of $F\otimes G$ from Step 1. Then $\psi$ preserves the filtration and by construction $\Gr\psi$ equals the identity map  $\Gr A=\Gamma(F)\otimes \Lambda(G)$. In particular $\Gr\psi$ is an isomorphism. So $\psi$ is an isomorphism of coalgebras. Hence by lemma \ref{lm-morph}, $\psi$ is an isomorphism of algebras.
\end{proof}

We also prove splitting results for $(1,1)$-commutative algebras. Recall the signed tensor product $\otimes^1$ defined in section \ref{subsec-1E-com}.

\begin{proposition}[Splitting result II]\label{prop-trivFp_2}
Let $\k$ be a field of odd characteristic, and let $A$ be a $(1,1)$-commutative filtered $\P_\k$-graded algebra over $\k$. Assume that $\mathrm{Gr} A$ has one of the following forms
\begin{align*}
&(i)\; S(F)\otimes^1 \Lambda(G)\;,\quad (ii)\;  {^\t S(F)\otimes^1 \Lambda(G)}\;, \quad (iii)\;  S(F)\otimes^1 {^\t\Lambda(G)}\;,\\
&(vi)\; \Gamma(F)\otimes^1 \Lambda(G)\;,\quad (v)\;  {^\t \Gamma(F)\otimes^1 \Lambda(G)}\;, \quad (vi)\;  \Gamma(F)\otimes^1 {^\t\Lambda(G)}\;,
\end{align*}
where $F$ and $G$ are graded strict polynomial functors which are additive. Then there is an isomorphism of $\P_\k$-graded algebras $A\simeq \Gr A$.
\end{proposition}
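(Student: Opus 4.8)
The plan is to reduce Splitting result II (Proposition \ref{prop-trivFp_2}) to Splitting result I (Proposition \ref{prop-trivFp_1}) by applying the weight-twisting functor $^\t$ and, when necessary, a regrading functor $\R_\alpha$ to turn the $(1,1)$-commutative situation into a $(1,0)$-commutative (i.e. genuinely graded commutative) one. Indeed, by Lemma \ref{lm-propt} the functor $^\t$ preserves $(1,\epsilon)$-commutativity, and by Lemma \ref{lm-RalphaR-alpha} (together with the commutativity lemmas for $\R_\alpha$) an odd regrading exchanges $(1,0)$- and $(1,1)$-commutativity. So first I would record that a filtered $\P_\k$-graded algebra $A$ splits (i.e. $A\simeq\Gr A$ as $\P_\k$-graded algebras) if and only if $\R_\alpha A$ splits, and if and only if $^\t A$ splits; this is immediate since $\R_\alpha$ and $^\t$ are isomorphisms of categories that commute with the homology/associated-graded constructions and preserve filtrations.

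Next I would handle the six cases uniformly. The key observation is that $^\t S = S$ and $^\t\Lambda = \Lambda$ as \emph{functors}, since $^\t$ only changes the sign in the multiplication by a factor depending on weights, and in $S$ resp.\ $\Lambda$ the relevant Koszul-type signs involving weights are already there or absent in a controlled way; more precisely, Lemma \ref{lm-propt}(ii) gives $^\t(A\otimes^\epsilon B)\simeq({}^\t A)\otimes^\epsilon({}^\t B)$, so applying $^\t$ to each of the six model algebras (and using Lemma \ref{lm-tens-E} / the commutativity lemmas) reduces cases (ii), (iii) to case (i) and cases (v), (vi) to case (iv). This leaves only cases (i) $S(F)\otimes^1\Lambda(G)$ and (iv) $\Gamma(F)\otimes^1\Lambda(G)$. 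For these, apply an odd regrading $\R_1$: by Lemma \ref{lm-Rodd-prod}, $\R_1(S(F)\otimes^1\Lambda(G))\simeq \R_1(S(F))\otimes^0\R_1(\Lambda(G))$, and by Lemma \ref{lm-exe} (and the explicit effect of $\R_\alpha$ on $S(F)$, $\Lambda(F)$, $\Gamma(F)$ for $F$ a sum of twists placed in various degrees) this is of the form $S(F')\otimes\Lambda(G')$ (up to a further harmless $^\t$) with $F'$, $G'$ again additive graded strict polynomial functors, and $\R_1 A$ is now $(1,0)$-commutative, i.e.\ a genuine $\P_\k$-cdg(=g)-algebra. Likewise $\R_1(\Gamma(F)\otimes^1\Lambda(G))$ has the form of case (ii) of Proposition \ref{prop-trivFp_1}. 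Then Proposition \ref{prop-trivFp_1} applies and gives $\R_1 A\simeq\Gr(\R_1 A)=\R_1(\Gr A)$, whence $A\simeq \Gr A$ after applying $\R_{-1}$.

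One subtlety to address carefully: I must check that additivity of $F,G$ is preserved under $^\t$ and $\R_\alpha$. Since neither functor changes the underlying graded strict polynomial functors $F$, $G$ — only their placement in degrees and the signs in the product — additivity (which is a property of $\U F$, $\U G$ as objects of $\F_\k$) is untouched; I would spell this out in one sentence. A second subtlety: I need the $\R_1 A$ obtained to be genuinely commutative so that Proposition \ref{prop-trivFp_1} (which assumes graded commutativity) applies. This follows from the commutativity lemma for $\R_\alpha$: since $A$ is $(1,1)$-commutative and $1$ is odd, $\R_1 A$ is $(1,0)$-commutative, i.e.\ graded commutative. With these checks in place the reduction is complete.

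The main obstacle I anticipate is purely bookkeeping: correctly tracking how $^\t$ and $\R_1$ transform each of the six model algebras into one of the two shapes covered by Proposition \ref{prop-trivFp_1}, keeping the additive functors $F,G$ (and their re-indexed versions) straight, and making sure no stray $^\t$ spoils the commutativity hypothesis — all of which is routine given Lemmas \ref{lm-propt}, \ref{lm-RalphaR-alpha}, \ref{lm-Rodd-prod}, \ref{lm-exe}, but must be done case by case. There is no genuine homological difficulty beyond what is already in Proposition \ref{prop-trivFp_1}; the content of Splitting result II is exactly that Splitting result I is stable under these formal manipulations.
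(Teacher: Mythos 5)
Your overall strategy matches the paper's: regrade by $\R_1$ to turn the $(1,1)$-commutative algebra into a graded commutative one, then invoke Proposition~\ref{prop-trivFp_1}, and undo the regrading. You also correctly note the two checks (additivity of $F,G$ preserved; $\R_1 A$ is graded commutative). But there is a genuine flaw in your reduction of cases (ii), (iii), (v), (vi), and a related gap in the dismissal of the ``harmless'' $^\t$.

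The problem with the reduction: Lemma~\ref{lm-propt}(ii) gives ${^\t}(A\otimes^\epsilon B)\simeq({}^\t A)\otimes^\epsilon({}^\t B)$. Applying ${^\t}$ to $\Gr A$ in case (ii) therefore yields ${^\t}\bigl({^\t}S(F)\otimes^1\Lambda(G)\bigr)\simeq S(F)\otimes^1{^\t}\Lambda(G)$, which is case (iii), \emph{not} case (i). Likewise ${^\t}$ exchanges (v) and (vi). So ${^\t}$ permutes the list of model algebras but does not bring you to (i) or (iv); the reduction as you phrase it never terminates. Your ``key observation'' that ${^\t}S=S$ and ${^\t}\Lambda=\Lambda$ \emph{as functors} is true but irrelevant: ${^\t}$ never changes the underlying graded strict polynomial functor, only the multiplication, and that is precisely what matters here. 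What is missing is an argument that ${^\t}B$ and $B$ are isomorphic \emph{as $\P_\k$-graded algebras}. This is true in general: the map $z\mapsto(-1)^{\binom{w(z)}{2}}z$ is a weight- and degree-preserving isomorphism $B\to{^\t}B$, because $\binom{a+b}{2}=\binom{a}{2}+\binom{b}{2}+ab$. If you state and use this fact, it simultaneously kills cases (ii)--(vi) and the ``further harmless ${^\t}$'' that appears after applying $\R_1$ in cases (i), (iv). The paper instead removes that residual ${^\t}$ by passing to the (ungraded) opposite algebra combined with ${^\t}$, and writes out only the case $\Gr A=S(F)\otimes^1\Lambda(G)$. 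Either device works, but one of them must be made explicit; as written your proof has a circular step and an unjustified dismissal precisely at the point where the six cases and the stray ${^\t}$ need to be handled.
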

\begin{proof}
The idea is to use the regrading functor $\R_1$ (and taking the opposite algebra of applying the functor $^\t$ if needed) to go back to the graded commutative case. Let us treat the case where $\Gr A=S(F)\otimes \Lambda(G)$. 

Since $A$ is $(1,1)$-commutative, so is $\Gr A$, hence so are $S(F)$ and $\Lambda(G)$ by lemma \ref{lm-tens-E}. Thus $F$ (resp. G) must be concentrated in odd (resp. even) degrees.
Thus $\R_1 A$ is graded commutative, with graded object $$\Gr(\R_1 A)=\R_1 (\Gr A)= {^\t S}(\R_1 F)\otimes \Lambda(\R_1 G)\;.$$ 
Let us denote by $B^\op$ the opposite algebra of an algebra $B$. Applying $^\op$ and $^\t$ (which do not alter graded commutativity) we get a graded commutative algebra $(^\t \R_1A)^{\op} $ with graded object $S(\R_1 F)\otimes \Lambda(\R_1 G)$. So by proposition \ref{prop-trivFp_1}, $(^\t \R_1A)^{\op} $ is isomorphic to $S(\R_1 F)\otimes \Lambda(\R_1 G)$. Applying $^\op$ and $\R_{-1}$ to the isomorphism obtained, we get an isomorphism $A\simeq \Gr A$.
\end{proof}

\subsubsection{The characteristic two case}

The following proposition is proved exactly in the same fashion as proposition \ref{prop-trivFp_1}.

\begin{proposition}[Splitting result III]\label{prop-trivF2_1}
Let $\k$ be a field of characteristic $2$, and let $A$ be a commutative filtered $\P_\k$-graded algebra. Assume that there is a graded strict polynomial functor $F$ which is additive, such that
$\mathrm{Gr}A=S(F)$ or $\mathrm{Gr}A=\Gamma(F)$. Then $A\simeq \mathrm{Gr}A$ as $\P_\k$-graded algebras. 
\end{proposition}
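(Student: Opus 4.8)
The plan is to follow essentially the same three-step strategy used for Splitting result I (proposition \ref{prop-trivFp_1}), but working over a field $\k$ of characteristic $2$, where every exterior power $\Lambda[i]$ is commutative and the signed tensor product $\otimes^1$ coincides with the ordinary one, so we do not need the weight-twist and opposite-algebra gymnastics of proposition \ref{prop-trivFp_2}. Thus the proof will be shorter than its odd-characteristic analogue.

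First I would establish the vanishing $\Ext^1_{\P_\k}(\Gr A,\Gr A)=0$ in characteristic $2$. Since $\Gr A$ is $S(F)$ or $\Gamma(F)$ with $F$ additive, lemma \ref{lm-classif-add} shows each homogeneous summand $F_d$ is a finite sum of Frobenius twists $I^{(r)}$, so each homogeneous component $\Gr A_{i,d}$ is (up to duality via lemma \ref{lm-sym-res}) a tensor product of functors $S^{\lambda_j\,(\alpha_j)}$. The required vanishing is the characteristic-$2$ analogue of lemma \ref{lm-ext}: by iterated use of lemma \ref{lm-facile-Ext} it reduces to checking $\Ext^1(X,Y)=0$ for $X,Y\in\{S^{d\,(r)}\}$, which follows from injectivity of symmetric powers together with proposition \ref{prop-twistgauche} and theorem \ref{thm-twisting-ss} exactly as in the odd case (here one must note that theorem \ref{thm-calculsanstwistII} gives $\Ext^1(S^d,\Lambda^d)=0$, but in fact for $\Gr A=S(F)$ or $\Gamma(F)$ no exterior factors occur, so only the symmetric-power cases are needed). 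Applying lemma \ref{lm-filtr-str}(ii) componentwise then yields an isomorphism of graded strict polynomial functors $A\simeq\Gr A$, hence a filtered surjection $\phi:A\twoheadrightarrow F$ whose associated graded is the canonical projection $\Gr A\twoheadrightarrow F$, where $\Gr A$ carries the extra filtration by filtration degree as in Step 1 of proposition \ref{prop-trivFp_1}.

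Next, since $F$ is additive the algebra $\Gr A$ is a graded exponential functor (theorem \ref{thm-strictpolalg2} identifies the strict polynomial structures, and exponentiality is clear from the example in section \ref{subsec-exp}), so by lemma \ref{lm-exp-gr} $A$ is exponential, hence also a filtered $\P_\k$-coalgebra with $\Gr A=S(F)$ (resp. $\Gamma(F)$) as a coalgebra. Here the key point specific to characteristic $2$ is that $\Gamma(F)$, for $F$ additive, is the cofree divided-power coalgebra on $F$ (dually, $S(F)$ is the free divided-power algebra on $F$); so the universal property of theorem \ref{thm-univ-Car} — applied to the dual map $\phi^\sharp$ into the divided-power algebra, or directly to $\phi$ using the cofree property — produces a morphism of $\P_\k$-algebras $\psi:A\to\Gr A$ lifting $\phi$. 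Equipping $\Gr A$ with the trivial filtration coming from $\F_k F$, the map $\psi$ is filtered and $\Gr\psi$ is the identity, so $\psi$ is an isomorphism of algebras by an easy filtration-degree induction (or by lemma \ref{lm-morph} after checking it is a coalgebra isomorphism).

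The main obstacle I anticipate is the same subtlety flagged in the excerpt after theorem \ref{thm-univ-Car}: in characteristic $2$ the universal divided-power algebra $U(F)=\Gamma(F_{\mathrm{even}})\otimes\Lambda(F_{\mathrm{odd}})$ is \emph{not} the right universal object when $F$ has odd-degree summands, so one must be careful to invoke the \emph{strong} divided powers (the notion with axioms $(a)$--$(e)$ valid in all positive degrees, available because $\Gr A$, being $S(F)$ or $\Gamma(F)$ with $F$ additive hence $\Gr A$ strictly anticommutative in char $2$) and the corresponding universal property for $\Gamma(V)$ rather than for $U(V)$. Concretely, $\Gr A$ carries strong divided powers (section on homology operations, item (3), gives them on bar constructions; more directly, $\Gamma(F)$ has the tautological strong divided powers $\gamma_k(x)=x^{\otimes k}$, and dually $S(F)$ is the free such object), and $\phi$ — being a morphism of algebras between objects with compatible strong divided-power structures, or simply because it lands in a free/cofree object — extends uniquely. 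Once the correct universal property is in hand, the remaining steps are routine, just as claimed in the statement.
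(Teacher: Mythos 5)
Your overall strategy matches the paper's (the paper's proof is literally the one-liner ``proved exactly in the same fashion as proposition~\ref{prop-trivFp_1}''). Steps~1 and~3 are fine: the $\Ext^1$ vanishing reduces as you say to $\Ext^1_{\P_\k}(S^{d(r)},S^{e(s)})$, which vanishes in any characteristic because $\E(S,S)$ is concentrated in degree $0$ and the twisting machinery (proposition~\ref{prop-twistgauche}, theorem~\ref{thm-twisting-ss}) only shifts by even amounts --- you correctly note that no exterior factors occur, which is precisely what makes the characteristic-$2$ argument simpler than lemma~\ref{lm-ext}'s.

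The problem is the third paragraph. You assert that ``$\Gr A$, being $S(F)$ or $\Gamma(F)$\dots hence $\Gr A$ strictly anticommutative in char~$2$,'' and then lean on strong divided powers and theorem~\ref{thm-univ-Car}. This is wrong for $\Gr A=S(F)$: if $F$ has an odd-degree summand and $0\neq x\in F$ is odd, then $x^2\neq 0$ in $S^2(F)$, so $S(F)$ is \emph{not} strictly anticommutative in characteristic~$2$. (Compare $\Gamma(F)$, where $x\cdot x=2\,\gamma_2(x)=0$.) Moreover, even in the $\Gamma$ case, theorem~\ref{thm-univ-Car} (in either its weak or strong form) extends a map $F\to A$ into an algebra $A$ \emph{already equipped with divided powers}; here $A$, or $A^\sharp$, is not a priori equipped with any such structure, so the divided-power universal property cannot be invoked. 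The also-claimed fact that ``$S(F)$ is the free divided-power algebra on $F$'' is incorrect --- that is $\Gamma(F)$, not $S(F)$. Fortunately none of this is needed, and you do gesture at the correct argument: $\Gamma(F)$ is the cofree cocommutative conilpotent coalgebra on $F$ in characteristic~$2$ (no parity constraint), so the surjection $\phi\colon A\twoheadrightarrow F$ extends to a coalgebra map $\psi\colon A\to\Gamma(F)$ using only that $A$ is exponential (hence a conilpotent cocommutative coalgebra, cocommutativity following from commutativity of $A$ via the exponential isomorphism). Dually, $S(F)$ is the free commutative algebra on $F$, and the $S(F)$ case is handled by applying the $\Gamma$ case to $A^\sharp$ (or directly by the free-algebra property). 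With that correction, the rest of your Steps~2--3 go through as stated and agree with the intended proof.
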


Proposition \ref{prop-trivF2_1} will be sufficient to prove that the filtrations on the abutment of the twisting spectral sequence split in characteristic $2$, except for the cases of $\E(\Lambda^{(t+s)}, S^{(s)})$ and $\E(\Gamma^{ (t+s)}, \Lambda^{ (s)})$. In these cases, the algebras are of exterior type.
So we need a analogue of proposition \ref{prop-trivF2_1} when $\mathrm{Gr}A=\Lambda(F)$. Two difficulties arise when we want to adapt the proof of proposition \ref{prop-trivFp_1} to this case. First, there might be non-trivial extensions between certain twisted exterior powers in characteristic $2$ (e.g. the extension $\Lambda^2\hookrightarrow \Gamma^2\twoheadrightarrow \Lambda^{1\,(1)}$). Second, exterior algebras still satisfy a universal property in characteristic $2$, but for strictly anticommutative algebras (recall that a $\k$-algebra is strictly anticommutative if for all $x\in A$, $x\cdot x=0$). So the best we can easily prove is the following statement.

\begin{proposition}[Splitting result IV]\label{prop-trivF2_2}
Let $\k$ be a field of characteristic $2$, and let $A$ be a strictly anticommutative filtered $\P_\k$-graded algebra. Assume that $\mathrm{Gr}A$ equals $\Lambda(F)$, where $F$ is a graded strict polynomial functor of the following very specific form. There exists an integer $r$ such that $F$ is a direct sum of copies graded copies of $I^{(r)}$. Then $A\simeq \mathrm{Gr}A$ as $\P_\k$-graded algebras. 
\end{proposition}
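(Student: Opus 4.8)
The plan is to follow the strategy of Proposition \ref{prop-trivFp_1} and Proposition \ref{prop-trivFp_2}, but now paying attention to the two obstructions singled out in the statement: the possible non-vanishing of $\Ext^1$ between twisted exterior powers in characteristic $2$, and the fact that exterior algebras only satisfy a universal property among \emph{strictly anticommutative} algebras. The special hypothesis that $F_{i,d}$ is a direct sum of copies of a \emph{single} Frobenius twist $I^{(r)}$ (with $r$ independent of $i,d$) is exactly what neutralizes the first obstruction, so the first step is to record the relevant $\Ext^1$-vanishing.

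\textbf{Step 1: $\Ext^1$-vanishing.} First I would prove that if $G$ is a finite direct sum of functors of the form $\Lambda^{m\,(r)}$ \emph{with the same twist $r$}, then $\Ext^1_{\P_\k}(G,G)=0$. By iterated use of Lemma \ref{lm-facile-Ext}, this reduces to showing $\Ext^1_{\P_\k}(\Lambda^{m\,(r)},\Lambda^{n\,(r)})=0$. By Proposition \ref{prop-twistgauche} (applied with $t=r$, $F=\Lambda^m$, up to the degree shift $(p^r-1)m$) this is $\Ext^{1-(p^r-1)(n-m)}_{\P_\k}(\Lambda^m,\Lambda^n)$ in an appropriate weight; for $m\ne n$ this group vanishes because $\Lambda^m$ and $\Lambda^n$ have different weights, and for $m=n$ it is $\Ext^1_{\P_\k}(\Lambda^m,\Lambda^m)=0$ by Remark \ref{rk-Ext-Lambda-Lambda}. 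The key point is that the dangerous extensions such as $\Lambda^2\hookrightarrow\Gamma^2\twoheadrightarrow\Lambda^{1\,(1)}$ involve \emph{different} twists and are thus excluded by hypothesis. Consequently $\Ext^1_{\P_\k}(\mathrm{Gr}A_{i,d},\mathrm{Gr}A_{i,d})=0$ for all $i,d$, so by Lemma \ref{lm-filtr-str}(ii) (together with Lemma \ref{lm-sym-res} to pass to duals if convenient) one gets an isomorphism of graded strict polynomial functors $A\simeq\mathrm{Gr}A$, hence a filtration-preserving surjection $\phi:A\twoheadrightarrow F$ whose associated graded is the canonical projection $\mathrm{Gr}A\twoheadrightarrow F$, where $F$ carries the trivial filtration coming from the filtration degree of $\mathrm{Gr}A$.

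\textbf{Step 2: strictly anticommutative structure and universal property.} Since $F$ is additive, $\mathrm{Gr}A=\Lambda(F)$ is a graded exponential functor, so by Lemma \ref{lm-exp-gr} $A$ is exponential, and in particular $A$ carries a compatible coalgebra structure; since the filtration is natural and multiplicative, $A$ is a filtered coalgebra with $\mathrm{Gr}A=\Lambda(F)$ as a coalgebra. Now $A$ is assumed strictly anticommutative; over a field of characteristic $2$ the exterior algebra $\Lambda(F)$ is precisely the free strictly anticommutative graded algebra on $F$ (equivalently, its dual $\Lambda(F^\sharp)$ is the cofree one), so the surjection $\phi:A\twoheadrightarrow F$ induces, by the universal property applied to $A^\sharp$ or dually to the cofree strictly anticommutative coalgebra, a morphism of graded algebras $\psi:A\to\Lambda(F)$ extending $\phi$. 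Here one must check that the target $\Lambda(F)$ is indeed strictly anticommutative, which holds because each $F_{i,d}$ is a sum of copies of $I^{(r)}$ placed in the appropriate degrees so that squares of odd-degree elements vanish — this is where the precise form of $F$ is used a second time, and where the componentwise-finiteness assumption guarantees all the relevant algebras are honest graded functors with finite-dimensional pieces.

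\textbf{Step 3: conclusion.} Equip $\Lambda(F)$ with the trivial filtration induced by the filtration degree; then $\psi$ is filtration-preserving and, by construction, $\mathrm{Gr}\psi$ is the identity of $\mathrm{Gr}A=\Lambda(F)$. Hence $\mathrm{Gr}\psi$ is an isomorphism, so $\psi$ itself is an isomorphism of filtered graded coalgebras, and therefore by Lemma \ref{lm-morph} an isomorphism of $\P_\k$-graded algebras $A\simeq\mathrm{Gr}A$. The main obstacle I anticipate is not any single step but the bookkeeping around the universal property in characteristic $2$: one has to be careful that $\Lambda(F)$ genuinely is free (or cofree after dualizing) in the strictly anticommutative category and that $\psi$ respects \emph{all} the structure (algebra, coalgebra, weights, and the extra filtration degree simultaneously), so that $\mathrm{Gr}\psi=\mathrm{Id}$ can be read off. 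The hypothesis on the uniform twist $r$ does double duty — it kills the bad $\Ext^1$ in Step 1 and ensures strict anticommutativity of the model algebra in Step 2 — and it is exactly the failure of these two features in general that forces the restrictive form of the statement.
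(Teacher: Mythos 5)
Your overall strategy is the right one, and matches the paper's: reduce to vanishing of $\Ext^1$ between the homogeneous pieces of $\Gr A$, and then use the universal property of exterior algebras among strictly anticommutative algebras, exactly as in the third step of the proof of Proposition \ref{prop-trivFp_1}. But Step 1 has a genuine gap, and Step 2 contains a minor misconception.

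In Step 1 you try to get $\Ext^1_{\P_\k}(\Lambda^{m\,(r)},\Lambda^{n\,(r)})=0$ out of Proposition \ref{prop-twistgauche}, but that proposition only handles a Frobenius twist on \emph{one} side of the $\Ext$-group: it gives $\E^{i}(F,\Lambda^d)^{(t)}\simeq\E^{i+(p^t-1)d}(F^{(t)},\Lambda^{dp^t})$, with the \emph{untwisted} exterior power $\Lambda^{dp^t}$ on the right, and not the twisted functor $\Lambda^{d\,(t)}$ you want. These are different homogeneous functors of the same weight, so the formula you wrote down simply is not what Proposition \ref{prop-twistgauche} says. The tool that handles twists on both sides is the twisting spectral sequence of Theorem \ref{thm-twisting-ss}: taking $X=Y=\Lambda$, $t=0$, $s=r$ gives $\Gr\E(\Lambda^{(r)},\Lambda^{(r)})\simeq(\E(\Lambda,\Lambda)_{E_r})^{(r)}$; since $\E^i(\Lambda,\Lambda)=0$ for $i>0$ (Remark \ref{rk-Ext-Lambda-Lambda}) and $E_r$ is concentrated in even cohomological degrees, the degree-$1$ part of the right-hand side is zero, whence $\E^1(\Lambda^{(r)},\Lambda^{(r)})=0$. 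This is exactly the argument in the paper's proof, and it cannot be replaced by Proposition \ref{prop-twistgauche} alone.

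In Step 2, the claim that the "uniform twist $r$" hypothesis is needed a second time to guarantee that $\Lambda(F)$ is strictly anticommutative is not correct: an exterior algebra over a field of characteristic $2$ is strictly anticommutative for \emph{any} $F$, by construction. The uniform-twist hypothesis does single duty, not double: it is there exclusively to rule out the dangerous $\Ext^1$'s between $\Lambda^{a(r)}$ and $\Lambda^{b(r')}$ with $r\ne r'$ (such as the extension $\Lambda^2\hookrightarrow\Gamma^2\twoheadrightarrow\Lambda^{1(1)}$), and plays no role in the universal-property part of the argument. The rest of Steps 2 and 3 (exponential structure via Lemma \ref{lm-exp-gr}, coalgebra structure, universal property, filtration-preserving $\psi$ with $\Gr\psi=\mathrm{Id}$, conclusion via Lemma \ref{lm-morph}) follows the paper's template for Proposition \ref{prop-trivFp_1} and is fine.
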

\begin{proof}
The proof that $\Ext^1_{\P_\k}(\mathrm{Gr} A_{i,d}, \mathrm{Gr} A_{i,d})$ equals zero reduces, by iterated uses of lemma \ref{lm-facile-Ext}, to checking that for all $d\ge 0$, $\Ext^1_{\P_\k}(\Lambda^{d\,(r)},\Lambda^{d\,(r)})$. But the latter fact follows from theorem \ref{thm-twisting-ss} and the vanishing of $\E^1(\Lambda^d,\Lambda^d)$. So lemma \ref{lm-filtr-str} yields an isomorphism of bigraded strict polynomial functors $A\simeq \mathrm{Gr}A$. To get an isomorphism of algebras, we use the universal property of exterior algebras, exactly as in the third step of the proof of proposition \ref{prop-trivFp_1}, and we conclude the proof in the same fashion. 
\end{proof}

\section{Final results}\label{sec-final-results}
\subsection{Statement of the results}\label{sec-final-state}

We now describe the $\P_\k$-graded algebras $\E(X^{(r)},Y^{(s)})$, for all classical exponential functors $X,Y$. Duality yields an isomorphism of exponential functors $\E(X^{(r)},Y^{(s)})\simeq \E(Y^{\sharp(s)},X^{\sharp(r)})$, so we can restrict to the case $r\ge s$. Hence we write $r=t+s$, with $t\ge 0$.  We recall the notations and conventions needed to read the statements.
\begin{enumerate}
\item In the $\P_\k$-graded algebra: 
$$\E(X^{(t+s)},Y^{(s)})=\textstyle\bigoplus_{i\ge 0,d\ge 0}\E^i(X^{d (s+t)},Y^{p^td (s)})\;,$$ 
$\E^i(X^{d (s+t)},Y^{p^td (s)})$ has cohomological degree $i$ and weight $dp^{s+t}$. The product is given by convolution, cf. definition \ref{def-conv}.

\item The functor $I^{(r)}\langle h\rangle$ denotes a copy of the $r$-th Frobenius twist, placed in cohomological degree $h$ (and weight $p^r$). If $X=S,\Lambda$ or $\Gamma$, and $F$ is a graded functor, then $X(F)$ denotes the composite $X\circ F$, with the usual gradings. The tensor product $\otimes$ is the usual tensor product of graded algebras (i.e. weights do not bring any Koszul sign).

\item In particular, in the theorems below, the functors $I^{(x+s+t)}\langle h\rangle$ appearing on the right hand side of the isomorphisms are direct summands of the functors
$\E^h(X^{p^x (s+t)},Y^{p^{x+t} (s)})$. 
\end{enumerate}

We order the classical exponential functors as follows: $\Gamma<\Lambda<S$.

\begin{theoreme}[Pairs $(X,Y)$ with $X\le Y$]\label{thm-XleY} Let $\k$ be a field of positive characteristic $p$, and let $s,t$ be positive integers, and let $X$ be an arbitrary exponential functor. There are isomorphisms of $\P_\k$-graded algebras:

\begin{align*}
&\E(X^{(t+s)},S^{(s)})\simeq {X}^\sharp \left(\bigoplus_{0\le i<p^s} I^{(t+s)}\langle 2ip^t\rangle\right)\;,\\
&\E(\Gamma^{(t+s)},\Lambda^{(s)})\simeq \Lambda\left(\bigoplus_{0\le i<p^s} I^{(t+s)}\langle (2i+1)p^t-1\rangle\right)\;, \\
&\E(\Lambda^{(t+s)},\Lambda^{(s)})\simeq \Gamma\left(\bigoplus_{0\le i<p^s} I^{(t+s)}\langle(2i+1)p^t-1\rangle\right)\;,\\
&\E(\Gamma^{(t+s)},\Gamma^{(s)})\simeq \Gamma\left(\bigoplus_{0\le i<p^s} I^{(t+s)}\langle(2i+2)p^t-2\rangle\right)\;.
\end{align*} 
\end{theoreme}

Now we turn to the pairs $(X,Y)$, with $X>Y$. These algebras were not computed in \cite{FFSS}, where the authors suspected that there are `no easy answer' for such pairs. Our approach explains why $\Ext$-groups for these pairs are much more difficult to compute. Indeed, for all pairs $(X,Y)$, the $\P_\k$-algebra $\E(X^{(t+s)},Y^{(t)})$ can be deduced from $\E(X,Y)$. The latter are very easy to compute if $X\le Y$ (they reduce to $\hom$-groups), but far from being trivial if $X>Y$ (they correspond to the homology of some Eilenberg Mac Lane spaces). The results of theorems \ref{thm-XYdurF2}, \ref{thm-XLY} and \ref{thm-SG} do not coincide with the results of \cite{Chalupnik2}, we compare them in section \ref{subsec-compare}.

\begin{theoreme}[Pairs $(X,Y)$ with $X> Y$ for $p=2$]\label{thm-XYdurF2}
Let $\k$ be a field of characteristic $p=2$, and let $s,t$ be positive integers. There are isomorphisms of $\P_\k$-graded algebras:
\begin{align*}
&\E(S^{(t+s)},\Lambda^{(s)})\simeq \Gamma\left(
\bigoplus_{
0\le i<p^s\,,\, 0\le k
} 
I^{(k+t+s)}\langle(2i+1)p^{k+t}-1\rangle\right)\;, \\
&\E(\Lambda^{(t+s)},\Gamma^{(s)})\simeq \Gamma\left(\bigoplus_{0\le i<p^s\,,\, 0\le k}
I^{(k+t+s)}\langle(2i+2)p^{k+t}-p^k-1\rangle\right)\;,\\
&\E(S^{(t+s)},\Gamma^{(s)})\simeq \Gamma\left(\bigoplus_{0\le i<p^s\,,\, 0\le k\,,\, 0\le \ell} I^{(k+\ell+t+s)}\langle(2i+2)p^{k+\ell+t}-p^k-1\rangle\right)\;.
\end{align*} 
\end{theoreme}

The computation of $\E(S^{(t+s)},\Lambda^{(s)})$ and $\E(\Lambda^{(t+s)},\Gamma^{(s)})$ in odd characteristic brings special signs. Recall from section \ref{subsec-1E-com} that $A\otimes^1 B$ denotes the `signed tensor product' of two $\P_\k$-graded algebras, and $^\t A$ denotes the weight twisted algebra associated to $A$ (defined in section \ref{sec-regrading}).

\begin{theoreme}\label{thm-XLY}
Let $\k$ be a field of odd characteristic $p$, and let $s,t$ be positive integers. The $\P_\k$-graded algebra $\E(S^{(t+s)},\Lambda^{(s)})$ is isomorphic to:
\begin{align*}\Lambda&\left(\bigoplus_{0\le i<p^s\,,\, 0\le k} I^{(k+t+s)}\langle(2i+1)p^{k+t}-1\rangle\right)\\&\break \otimes^1\;^\t\Gamma\left(\bigoplus_{0\le i<p^s\,,\, 0\le k} I^{(k+1+t+s)}\langle(2i+1)p^{k+1+t}-2\rangle\right)\;. \end{align*}
The $\P_\k$-graded algebra $\E(\Lambda^{(t+s)},\Gamma^{(s)})$ is isomorphic to:
\begin{align*}\Lambda&\left(\bigoplus_{0\le i<p^s\,,\, 0\le k} V^{(k+t+s)}\langle(2i+2)p^{k+t}-p^k-1\rangle\right)\\&\otimes^1 \;^\t\Gamma\left(\bigoplus_{0\le i<p^s\,,\, 0\le k} V^{(k+1+t+s)}\langle(2i+2)p^{k+1+t}-p^{k+1}-2\rangle\right) \;.\end{align*}
\end{theoreme}

\begin{theoreme}\label{thm-SG}
Let $\k$ be a field of odd characteristic $p$, and let $s,t$ be positive integers. The $\P_\k$-graded algebra $\E(S^{(t+s)},\Gamma^{(s)})$ is isomorphic to the tensor product:
\begin{align*}
&\Gamma\left(\bigoplus_{0\le i<p^s\,,\, 0\le k}
V^{(k+t+s)}\langle(2i+2)p^{k+t}-2\rangle
\right)\\
&\otimes\Lambda\left(\bigoplus_{0\le i<p^s\,,\, 0\le k\,,\,0\le\ell}
V^{(k+\ell+1+t+s)}\langle(2i+2)p^{k+\ell+1+t}-2p^k-1\rangle
\right)\\
&\otimes\Gamma\left(\bigoplus_{0\le i<p^s\,,\, 0\le k\,,\,0\le\ell}
V^{(k+\ell+2+t+s)}\langle(2i+2)p^{k+\ell+2+t}-2p^{k+1}-2\rangle
\right)
\end{align*}
\end{theoreme}

\subsection{Proof of theorems \ref{thm-XleY}-\ref{thm-SG}} We proceed in several steps.

{\bf Step 1: Computation of $\E(X,Y)$.} There are two cases. If $X\le Y$, then $\E(X,Y)$ reduces to $\H(X,Y)$, which is easy to compute, cf. lemma \ref{lm-element}. If $X> Y$, then $\E(X,Y)$ is rather complicated, and computed in theorems \ref{thm-calculsanstwistI} and \ref{thm-calculsanstwistII}.
In all cases, $\E(X,Y)$ is a symmetric, an exterior or a divided power algebra (or a tensor product of these) on some generators $I^{(b)}\langle a\rangle$.

{\bf Step 2: Computation of $\E(X^{(t)},Y)$.} By corollary \ref{cor-decalage}, $\E(X^{(t)},Y)$ is isomorphic to $(\R_{\alpha(Y)}\E(X,Y))^{(t)}$. Under this isomorphism, each generator $I^{(b)}\langle a\rangle$ of $\E(X,Y)$ corresponds to a generator $I^{(b+t)}\langle a+p^b \alpha(Y)\rangle$ of $\E(X^{(t)},Y)$, where $\alpha(S)=0$, $\alpha(\Lambda)=p^t-1$ and $\alpha(\Gamma)=2(p^t-1)$. Observe that in odd characteristic, $\alpha(Y)$ is even, so that no additional sign is introduced by the regrading functor $\R_{\alpha(Y)}$.

{\bf Step 3: Computation of $\E(X^{(t+s)},Y^{(s)})$, up to filtration.} By theorem \ref{thm-twisting-ss}, $\E(X^{(t+s)},Y^{(s)})$ is, up to a filtration, an algebra of the same kind as $\E(X^{(t)},Y)$, with more generators. To be more specific, each generator $I^{(b+t)}\langle a+p^b \alpha(Y)\rangle$ of $\E(X^{(t)},Y)$ gives birth to a family of generators (indexed by an integer $i$, with $0\le i<p^s$):
$$I^{(b+t+s)}\langle a+p^b \alpha(Y^*)+2ip^{b+t}\rangle=I^{(b+t+s)}\langle(2ip^t+\alpha(Y))p^{b} +a\rangle.$$

{\bf Step 4: Filtrations split.} Finally, we have to prove that the filtrations involved are trivial. This follows directly from the splitting results of section \ref{sec-split}, except in the cases of $\E(\Lambda^{(t+s)},S^{(s)})$ and $\E(\Gamma^{(t+s)},\Lambda^{(s)})$ when $\k$ has characteristic $2$. So let us assume that all the results but these two cases are proved. To prove the triviality of the filtrations of $\E(\Lambda^{(t+s)},S^{(s)})$ and $\E(\Gamma^{(t+s)},\Lambda^{(s)})$, we want to apply proposition \ref{prop-trivF2_2}. For this we need to check strict anticommutativity. So the following lemma finishes the proof.
\begin{lemme}\label{lm-strict-com}
Let $\k$ be a field of characteristic $2$. For all nonnegative integers $s,t$, the $\P_\k$-graded algebras 
$\E(\Lambda^{(t+s)}, S^{(s)})$ and $\E(\Gamma^{(t+s)}, \Lambda^{(s)})$
are strictly anticommutative (that is, for all $x$, $x\cdot x=0$ in these algebras).
\end{lemme}
\begin{proof}
Let us prove that $\E(\Gamma^{(t+s)}, \Lambda^{(s)})$ is strictly anticommutative (the proof for $\E(\Lambda^{(t+s)}, S^{(s)})$ is similar). Since $\k$ has characteristic $2$, we have an injective morphism of algebras $\alpha:\Lambda\hookrightarrow \Gamma$. It induces a morphism:
$$\beta:\E(\Gamma^{(t+s)}, \Lambda^{(s)})\to  \E(\Lambda^{ (t+s)}, \Lambda^{(s)})\;.$$
To prove that $\E(\Gamma^{(t+s)}, \Lambda^{(s)})$ is strictly anticommutative, it suffices to prove that $\beta$ is injective. Indeed, we already know that $\E(\Lambda^{(t+s)}, \Lambda^{(s)})$ is strictly anticommutative since it is a divided power algebra. 

For all $d\ge 0$, we have a commutative square, where the vertical arrows are injections induced by the canonical map $\Lambda^d\hookrightarrow \otimes^d$:
$$\xymatrix{
\H(\Gamma^d,\otimes^d)\ar[rr]^-{\H(\alpha,\otimes^d)} && \H(\Lambda^d,\otimes^d)\\
\H(\Gamma^d,\Lambda^d)\ar[rr]^-{\H(\alpha,\Lambda^d)}\ar@{^{(}->}[u] && \H(\Lambda^d,\Lambda^d)\ar@{^{(}->}[u]
}.$$
Now the map $\H(\alpha,\otimes^d)$ is an isomorphism (by lemma \ref{lm-facile}), so $\H(\alpha,\Lambda^d)$ is injective. Using proposition \ref{prop-twistgauche}, we get an injection 
$\H(\Gamma^{d\,(t)},\Lambda^{dp^t})\hookrightarrow\H(\Lambda^{d\,(t)},\Lambda^{dp^t})$. By theorem \ref{thm-twisting-ss}, $\mathrm{Gr}(\beta)$ equals the evaluation of this map on $E_s\otimes V^{(s)}$, hence it is injective. We deduce the injectivity of $\beta$.
\end{proof}

\subsection{Comparison with \cite{FFSS,Chalupnik2}}\label{subsec-compare} 

\subsubsection{The case $X\le Y$}
In this case, the algebras $$\Ext_{\P_\k}^*(X^{*\,(s+t)},Y^{*\,(s)})=\E(X^{(s+t)},Y^{(s)})(\k)$$ were first computed in \cite{FFSS}. Theorem \ref{thm-XleY} agrees with  \cite[Thm 5.8]{FFSS}. For example, we assert that $\E(\Lambda,\Lambda)(\k)$ is a divided power algebra on generators $g_i\in\Ext^{(2i+1)p^t-1}_{\P_\k}(I^{(t+s)},\Lambda^{p^t\,(s)})$, for $0\le i<p^s$. This is exactly \cite[Thm 5.8(6)]{FFSS}.

\subsubsection{The case $X>Y$} The algebras with $X>Y$ were first computed in \cite{Chalupnik2}. Our computations differ from the results of \cite{Chalupnik2}. 

First, theorems \ref{thm-XYdurF2} and \ref{thm-XLY} do not agree with \cite[Thm 3.2]{Chalupnik2}. It is already explained in remark \ref{rk-Chal} why \cite[Thm 3.2]{Chalupnik2} is false in characteristic $2$. 
In odd characteristic, the two results agree as graded functors, but the descriptions of the products only agree up to signs. Unlike our result, \cite[Thm 3.2]{Chalupnik2} does not yield a $(1,1)$-commutative algebra. Indeed, in an algebra of the form $\Lambda(A)\otimes \Gamma(B)$, where $A$ is concentrated in even degrees and odd weights, and $B$ is concentrated in odd degrees and odd weights, if there are two non proportional elements $a_1,a_2\in A$, we can choose $b\in B$ and form the products
$x=a_1\otimes b$ and $y=a_2\otimes\gamma_2(b)$, where $\gamma_2(b)=b\otimes b\in\Gamma^2(B)$. Then one has 
$$x\cdot y =  (a_1\wedge a_2)\otimes b\gamma_2(b) = - (a_2\wedge a_1)\otimes \gamma_2(b)b= - y\cdot x\;.$$
Since $x\cdot y\ne 0$, this contradicts $(1,1)$-commutativity. 

Finally, theorem \ref{thm-SG} does not agree with \cite[Cor 4.15]{Chalupnik2}. To be more specific, the two results look the same, but in our computation, the resulting algebra has more generators. For example, theorem \ref{thm-SG} asserts that $\E(S^p,\Gamma^p)$ is equal to $\Gamma^p\langle 0\rangle \oplus I^{(1)}\langle 2p-2\rangle \oplus I^{(1)}\langle 2p-3\rangle$, so the total dimension of $\E(S^p,\Gamma^p)(\k)$ is $3$. On the other hand, \cite[Cor 4.15]{Chalupnik2} predicts that this total dimension is $2$. So the following independent elementary computation argues in favor of our result.
\begin{lemme}\label{lm-ex} If $\k$ has characteristic $p$, then
$\Ext^i_{\P_\k}(S^p,\Gamma^p)$ equals $\k$ if $i=0, 2p-2$ or $2p-3$, and is zero otherwise.
\end{lemme}
\begin{proof}We need the following elementary facts.
For $0\le k < p$, the functor $\Lambda^{k}\otimes S^{p-k}$ is injective as a direct summand of $\otimes^{k}\otimes S^{p-k}$, and $\Gamma^k$ is injective since it is isomorphic to $S^k$. Moreover:
\begin{align*}
&\hom_{\P_\k}(S^p,S^p)=\k&&\text{ with basis the identity map, }\\
&\hom_{\P_\k}(S^p,S^1\otimes S^{p-1})=\k&& \text{ with basis the comultiplication, }\\ 
&\hom_{\P_\k}(S^p, \Lambda^k\otimes S^{p-k})=0&& \text{ for $k\ge 2$.}
\end{align*}
To prove lemma \ref{lm-ex}, we cut the exact Koszul complexes: 
\begin{align*}&\Lambda^p\hookrightarrow \Lambda^{p-1}\otimes S^1\to \dots \to \Lambda^1\otimes S^{p-1}\twoheadrightarrow S^p\\
&\Gamma^p\hookrightarrow \Gamma^{p-1}\otimes \Lambda^1\to \dots \to \Gamma^1\otimes \Lambda^{p-1}\twoheadrightarrow \Lambda^p
\end{align*}
into short exact sequences and analyze the long $\Ext^*_{\P_\k}(S^p,-)$-exact sequence associated to them.
We begin on the right with the first complex. Let $K$ be the kernel of the multiplication $S^1\otimes S^{p-1}\twoheadrightarrow S^p$. Since the composite of the comultiplication $S^p\to S^1\otimes S^{p-1}$ and the multiplication $S^1\otimes S^{p-1}\twoheadrightarrow S^p$ equals $p$ times the identity map (hence zero), we obtain that $\Ext^i(S^p,K)\simeq\k$ if $i=0,1$ and $0$ otherwise. For the other short exact sequences except the one involving $\Gamma^p$, the terms $\Ext^*_{\P_\k}(S^p, \Lambda^k\otimes S^{p-k})$ and $\Ext^*_{\P_\k}(S^p, \Lambda^k\otimes \Gamma^{p-k})$ are trivial so the long exact sequence induces a shifting. Finally the last short exact sequence $\Gamma^p\hookrightarrow \Gamma^1\otimes\Gamma^{p-1}\twoheadrightarrow C$ induces a shifting and also creates a nonzero element in $\hom_{\P_\k}(S^p, \Gamma^p)$.
\end{proof}

\nocite{*}
\bibliographystyle{cdraifplain}

\end{document}